\crefname{hypothesis}{Hypothesis}{Hypotheses}
\newtheorem{assumption}[theorem]{Assumption}
\title{Local convergence of a sequential quadratic programming method for a class of nonsmooth nonconvex objectives
        \thanks{Submitted to the editors.
\funding{Prepared by Lawrence Livermore National Laboratory (LLNL) under Contract DE-AC52-07NA27344.}}}
\author{Jingyi Wang\thanks{Center for Applied Scientific Computing, Lawrence Livermore National Laboratory,
Livermore, CA 
  (\email{wang125@llnl.gov, petra1@llnl.gov}).}
\and Cosmin G. Petra\footnotemark[2]
}
\newcommand{\Ctwo}{\ensuremath{\mathcal{C}^2 }}
\begin{document}
\nolinenumbers
\maketitle

\begin{abstract}
	A sequential quadratic programming (SQP) algorithm is designed for nonsmooth optimization problems with upper-$\Ctwo$ objective functions. Upper-$\Ctwo$ functions are locally equivalent to difference-of-convex (DC) functions with smooth convex parts. They arise naturally in many applications such as certain classes of solutions to parametric optimization problems, \textit{e.g.}, recourse of stochastic programming, and projection onto closed sets. 
	The proposed algorithm conducts line search and adopts an exact penalty merit function.
        The potential inconsistency due to the linearization of constraints are addressed through relaxation, similar to that of S$\ell_1$QP. 
We show that the algorithm is globally convergent under reasonable assumptions. 
        Moreover, we study the local convergence behavior of the algorithm under additional assumptions of Kurdyka-Łojasiewicz (KL) properties, which have been applied to many nonsmooth optimization problems. 
   Due to the nonconvex nature of the problems, a special potential function is used to analyze local convergence. 
     We show that under acceptable assumptions, upper bounds on local convergence can be proven.
        Additionally, we show that for a large number of optimization problems with upper-$\Ctwo$ objectives, their corresponding potential functions are indeed KL functions. Numerical experiment is performed with a power grid optimization problem that is consistent with the assumptions and analysis in this paper. 
\end{abstract}

\begin{keywords}
  optimization, nonsmooth, nonconvex, SQP, upper-$\Ctwo$, Kurdyka-Łojasiewicz 
\end{keywords}

\begin{AMS}
49M37, 65K05, 90C26, 90C30, 90C55
\end{AMS}


\newcommand{\Rbb}{\ensuremath{\mathbb{R} }}
\newcommand{\Nbb}{\ensuremath{\mathbb{N} }}
\newcommand{\Pbb}{\ensuremath{\mathbb{P} }}
\newcommand{\Ebb}{\ensuremath{\mathbb{E} }}
\section{Introduction}\label{se:intro}

In this paper, we consider the class of nonsmooth optimization problems in the form of
\begin{equation} \label{eqn:opt0}
 \centering
  \begin{aligned}
   &\underset{\substack{x}}{\text{minimize}} 
	  & & f(x)\\
   &\text{subject to}
	  & & c_i(x) = 0, \ i \in\mathcal{E} \\
	  &&& c_i(x) \geq 0, \ i \in \mathcal{I},
  \end{aligned}
\end{equation}
where functions $c_i:\Rbb^n\to\Rbb^{m_e},i\in\mathcal{E}$ and $c_i:\Rbb^n\to\Rbb^{m_i},i\in\mathcal{I}$ are smooth, \textit{i.e.}, continuously differentiable with Lipschitz gradients. 
The objective function $f:\Rbb^n\to\Rbb\cup\{+\infty\}$ is upper-$\Ctwo$ and thus locally Lipschitz~\cite[Theorem~10.31]{rockafellar1998}. 
The index sets $\mathcal{E}$ and $\mathcal{I}$ are finite. The feasible region is denoted by
\begin{equation} \label{eqn:feasible-set}
 \centering
  \begin{aligned}
	  \Omega = \{ x | c_i(x)=0, i\in\mathcal{E}; \ c_i(x)\geq 0, i\in\mathcal{I} \}.
  \end{aligned}
\end{equation}

Any finite, concave function is upper-$\Ctwo$~\cite[Theorem~10.33]{rockafellar1998}, as are continuously differentiable functions~\cite[Proposition~13.34]{rockafellar1998}.
Let $f$ be a DC function on $O\subset \Rbb^n$ which can be decomposed into 
\begin{equation} \label{eqn:f-dc}
 \centering
  \begin{aligned}
    f(x) = g(x) - h(x), 
  \end{aligned}
\end{equation}
where $g$ and $h$ are convex functions. If $g(\cdot)$ is smooth, then $f(\cdot)$ is upper-$\Ctwo$~\cite[10.35]{rockafellar1998}. 
Further, upper-$\Ctwo$ functions are locally equivalent to DC functions with smooth convex parts~\cite[Theorem~10.33]{rockafellar1998}. 

A squared distance function to a closed set is upper-$C^2$~\cite[Example~10.57]{rockafellar1998}.
The second-stage value function of a large number of two-stage (stochastic) programming problems with recourse~\cite{Shapiro_book,Birge97Book,KallWallace} is upper-$\Ctwo$. 
Our target application for the upper-$\Ctwo$ property is 
the distributed security-constrained alternating current optimal power flow (SCACOPF) problems~\cite{ChiangPetraZavala_14_PIPSNLP,Qiu2005,petra_14_augIncomplete,petra_14_realtime,petra_21_gollnlp}, which can be formed as a two-stage recourse problem with bounded feasible regions. 
In this case, the nonsmooth part of the first-stage objective $f(\cdot)$ becomes upper-$C^2$ through regularization of the second-stage problems~\cite{wang2022}. 
Moreover, the power grid optimization variables $x$ enjoy bound constraints, which makes the feasible region~\eqref{eqn:feasible-set} bounded.

Potential algorithms for solving~\eqref{eqn:opt0} include bundle methods~\cite{makela1992,Kiwiel1996,mifflin1982,kiwiel1985,hare2010,noll2013}, DC algorithms~\cite{an2018}, etc. 
Sequential quadratic programming (SQP), a popular optimization method for constrained problems, has been extended to nonsmooth optimization problems.
In~\cite{curtis2012}, SQP is combined with gradient sampling for nonsmooth inequality-constrained optimization with proof of global convergence.
A BFGS-SQP method is proposed in~\cite{curtis2017}, which shows promising convergence behaviors without requiring the existence of Hessian.
The constrained DC algorithms (DCA), which in essence linearizes the concave part of the objective, 
bears similarity to SQP~\cite{an2018,cui2021} and has been shown to converge to critical points. 
In~\cite{wang2022}, the authors proposed a trust-region updated SQP with line search on the constraints. The algorithm is shown to enjoy subsequential convergence for optimization problems in the form of~\eqref{eqn:opt0} under proper assumptions.  
In this paper, we aim to further simplify the SQP method to the classic line search algorithm with the $\ell_1$ merit function.
Additionally, the constraint qualification is relaxed from  linear independent constraint qualification (LICQ) to  Mangasarian-Fromovitz constraint qualification (MFCQ).


While the global convergence behavior of optimization methods for nonsmooth nonconvex problems has been studied extensively, their local convergence results are still challenging to establish. 
It is well-known that smooth quasi-Newton SQP can achieve superlinear local convergence if conditions on the Hessian approximation, second-order constraint qualification and strict complementarity are met~\cite{Nocedal_book}.  
Further, the quasi-Newton S$\ell_1$QP method could maintain superlinear convergence if carefully designed second-order correction terms are used~\cite{boggs1995sqp,fukushima1986secondorder}.
For nonsmooth objectives however, the Hessian and the conditions related to it are no longer available. 
Recently, a number of assumptions based on the Kurdyka-Łojasiewicz (KL) property have enjoyed huge success in ensuring linear local convergence of first-order methods for nonsmooth (stochastic) optimization problems including bundle methods and DCA~\cite{bolte2014kl,attouch2013convergence,le2018KL,liu2019pdcae,noll2014KL}. 
However, many of these results do not work explicitly with constrained problems, particularly for nonsmooth objectives.
In~\cite{atenas2023bundlelocal}, the authors showed that bundle methods applied to weakly convex objective and a convex feasible region  could generate linearly convergent serious steps if certain error bounds are satisfied.
In~\cite{yu2021KL}, the authors choose to enforce the constraints at the subproblem level by quadratic approximation, thus producing a feasible sequence of iterates. 
In many DCAs, nonsmooth objective is often relaxed from DC to weakly convex~\cite{yu2021KL}. 
In~\cite{liu2019pdcae}, the authors used a conjugate function to prove convergence of a version of DCA for nonconvex DC functions.  
In this paper, we study the local convergence behavior of our proposed SQP algorithm on nonsmooth problems with upper-$\Ctwo$ objectives, using the KL property assumptions. We make minimum changes to the globally convergent algorithm while focusing on reasonable assumptions that could ensure a transition to local convergence.

The paper is organized as follows. In Section~\ref{sec:prob}, we describe the mathematical background and notations.
In Section~\ref{sec:alg}, the line search SQP algorithm is proposed and its global convergence analysis  
is provided under reasonable assumptions.
In Section~\ref{sec:KLfunctions}, we demonstrate a group of optimization problems that have both an upper-$\Ctwo$ objective and a KL potential function.
The local convergence analysis based on KL property is presented in Section~\ref{sec:local}. 
Numerical experiments are shown in Section~\ref{sec:exp} where we apply the algorithm to a SCACOPF problem.

\newcommand{\norm}[1]{\left\lVert {#1} \right\rVert}
\section{Background and notations}\label{sec:prob}

In this section, we provide the mathematical background and notations necessary for the analysis in this paper. 
We use $\norm{\cdot}$ to denote the standard Euclidean norm (2-norm), $\norm{\cdot}_1$ the 1-norm and $\norm{\cdot}_{\infty}$ the infinity norm or maximum norm. The inner product in $\Rbb^n$ is denoted as $\langle \cdot \rangle$.
The domain of an extended real-valued function $f:\Rbb^n\to \Rbb\cup\{+\infty\}$ is denoted as $\text{dom} f:= \{x: f(x)<\infty\}$. 
A function $f$ is proper if $\text{dom} f\neq \emptyset$. 
The graph of $f$ is defined as $G(f):=\{(x,u)\in\Rbb^n\times\Rbb: f(x)=u\}$. 

The lower regular subdifferential of a function $f:\Rbb^n\to\Rbb$ at point $\bar{x}$, 
denoted as $\hat{\partial} f(\bar{x})$, is defined by  
\begin{equation}\label{eqn:subgradient-def}
\centering
 \begin{aligned}
	\hat{\partial} f(\bar{x}) := \left\{ g\in\Rbb^n |\liminf_{ \substack{x\to \bar{x}\\x\neq \bar{x} }}\frac{f(x)-f(\bar{x})-\langle g,x-\bar{x}\rangle  }{\norm{x-\bar{x}}}\geq 0\right\}.
 \end{aligned}
\end{equation}
For a Lipschitz $f(\cdot)$, if there exists a sequence $\{x^{\nu}\}$ such that $x^{\nu} \to \bar{x}$ and $g^{\nu}\in \hat{\partial}r(x^{\nu})$ with $g^{\nu}\to \bar{g}$, then 
$\bar{g}$ is a lower general subgradient (or lower limiting subgradient) of $f(\bar{x})$, written as $\bar{g} \in \partial f(\bar{x})$.

A Lipschitz function $f$ is lower regular (or subdifferentially regular) 
if and only if $\partial f(\bar{x})=\hat{\partial} f(\bar{x})$~\cite[Corollary~8.11]{rockafellar1998}. 
Lower general subgradient is often simply called general subgradient, 
and a lower regular function is called regular. 
On the other hand, 
upper regular subdifferential~\cite{rockafellar1998,mordukhovich2004upp} is defined as
\begin{equation}\label{def:upp-subgradient}
\centering
 \begin{aligned}
	 \hat{\partial}^+ f(\bar{x}) :=& - \hat{\partial} (-f)(\bar{x})
	 =\left\{ g\in\Rbb^n |\limsup_{ \substack{x\to \bar{x}\\x\neq \bar{x} }}\frac{f(x)-f(\bar{x})-\langle g,x-\bar{x}\rangle  }{\norm{x-\bar{x}}}\leq 0\right\}.
 \end{aligned}
\end{equation}
Similarly, the upper general subdifferential is given by $\partial^+ f(\bar{x}) := -\partial (-f)(\bar{x})$.
A function $f$ is called upper regular if $-f$ is lower regular. 
Examples of upper regular functions include all continuous concave and continuously differentiable functions.

The Clarke subdifferential~\cite{clarke1983} is denoted as $\bar{\partial} f{(\bar{x})}$, which has also been widely adopted.
If $f$ is lower/upper regular, its Clarke subgradient and lower/upper general subgradient can be used interchangeably. We use Clarke subgradient for global convergence analysis and (lower) general subgradient in Section~\ref{sec:local} for a convex potential function.
An important property of these subdifferential is the outer/upper-semicontinuity, necessary in establishing convergence~\cite[Proposition~6.6]{rockafellar1998}. 
In addition, for a Lipschitz $r$, $\bar{\partial} f(\bar{x})$ is locally bounded~\cite[Theorem~9.13]{rockafellar1998}. 

A more restrictive but useful property than regularity is lower-$C^k$~\cite{Spingarn1981SubmonotoneSO,rockafellar1998,daniilidis2004}. 
A function $r:O\to \Rbb$, where $O\subset\Rbb^n$ is open, is said to be lower-$C^k$ on $O$, if on some neighborhood $V$ of each $\bar{x}\in O$ there is a representation
\begin{equation}\label{eqn:lowc1-def-0}
\centering
 \begin{aligned}
	 f(x) =  \underset{\substack{t \in T}}{\text{max}} \ f_t(x),
 \end{aligned}
\end{equation}
where $f_t:\Rbb^n\to\Rbb$ is of class $C^k$ on $V$ and the index set $T$ is a compact space such that $f_t$ 
and its partial derivatives in $x$ through order $k$ are jointly continuous on $(t,x)\in T\times V$. 
A function is called upper-$C^k$ if $-f$ is lower-$C^2$ or if we replace the $\text{max}$ with $\text{min}$ in~\eqref{eqn:lowc1-def-0}.
Let $T\subset\Rbb^p$ be compact, $f$ is upper-$C^2$ if it can be expressed as
\begin{equation}\label{eqn:uppc2-def-1}
\centering
 \begin{aligned}
	 f(x) =  \underset{\substack{t \in T}}{\text{min}} \ p(t,x)
 \end{aligned}
\end{equation}
for all $x\in O$, such that $p:\Rbb^p\times\Rbb^n\to\Rbb$ and its first- and second-order partial derivatives in $x$ 
depend continuously on $(t,x)$. Clearly, upper-/lower-$C^k$ imply upper-/lower-regularity.
From its definition~\eqref{eqn:uppc2-def-1}, two-stage (stochastic) optimization problems that are coupled only in a smooth objective 
have upper-$C^2$ objective in the first-stage problem, regardless of the complexity of the feasible region of second-stage problems. Constraint-coupled second-stage problems might also be upper-$C^2$~\cite{liu2020} or can be relaxed to obtain an upper-$C^2$ value function, using for example, quadratic penalty regularization~\cite{wang2022}. 


Equivalently, a finite-valued function $f$ is lower-$C^2$ on $O\subset\Rbb^n$ if and only if there exists $\rho>0$ such that 
$f(\cdot)+\frac{1}{2}\rho\norm{\cdot}^2$ is convex relative to some neighborhood of each point of $O$. 
For any compact subset $D\subset O$, one can find a uniform $\rho$ over $D$.
If $f$ is upper-$\Ctwo$ on $D$, there exists $\rho>0$ such that 
\begin{equation}\label{eqn:uppc2-convex}
\centering
 \begin{aligned}
       -f(\cdot) + \frac{1}{2} \rho\norm{\cdot}^2
 \end{aligned}
\end{equation}
is convex.
Consequently, 
for an upper-$\Ctwo$ $f(\cdot)$ we have  
\begin{equation}\label{eqn:uppc2-def}
\centering
 \begin{aligned}
	 f(x) - f(\bar{x}) - \langle g,x -\bar{x} \rangle \leq \frac{\rho}{2}\norm{x-\bar{x}}^2, 
 \end{aligned}
\end{equation}
where $g \in \bar{\partial} f(\bar{x})$ and $x\in D \subset O$~(8.12,~\cite{rockafellar1998}).
The domain of the (lower) subdifferential is defined as $\text{dom} \partial f:= \{x: \partial f(x)\neq \emptyset\}$. 
For an upper-$\Ctwo$ function at $x\in \text{dom} f$, $\bar{\partial} f(x)\neq \emptyset$,~\cite[Theorem~10.31]{clarke1983}. 

For problem~\eqref{eqn:opt0}, a necessary first-order optimality condition at a local minimum $\bar{x}$ is that there exists $\bar{\lambda} \in \Rbb^m $ with 
components $\bar{\lambda}_i, i\in \mathcal{E}\cup\mathcal{I}$ such that 
\begin{equation} \label{eqn:opt0-KKT}
   \centering
  \begin{aligned}
	  0 \in  \bar{\partial} f(\bar{x}) - \sum_{i\in \mathcal{E}\cup\mathcal{I}} \bar{\lambda}_i \nabla c_i (\bar{x}), \\
	  c_i(\bar{x}) = 0, \ i\in \mathcal{E}&, \\
	  \bar{\lambda}_i c_i(\bar{x}) = 0, c_i(\bar{x}) \geq 0, i\in \mathcal{I}&,\\
	  \bar{\lambda}_i \geq 0, i \in \mathcal{I}&.
  \end{aligned}
\end{equation}
A point that satisfies~\eqref{eqn:opt0-KKT} is called a KKT point of~\eqref{eqn:opt0}.
For upper regular functions, it is possible to establish a stronger form 
of subgradient optimality condition~\cite{mordukhovich2004upp}. 

For a nonempty set $C\subset \Rbb^n$, we denote the distance from a point $x\in\Rbb^n$ to $C$ as $\text{dist}(x,C)$, where
\begin{equation} \label{def:set-distance}
   \centering
  \begin{aligned}
    \mathrm{dist}(x,C) := \text{inf}\{\norm{y-x}:y\in C\}.  
  \end{aligned}
\end{equation}
The indicator function $i_C(\cdot)$ of a closed set $C\subset\Rbb^m$ is defined as
\begin{equation} \label{eqn:def-ind}
 \centering
  \begin{aligned}
          i_C(x)  =   \begin{cases}
                  0,  &\text{if} \ x \in C, \\ 
                  +\infty, &\text{otherwise.}
    \end{cases}
  \end{aligned}
\end{equation}

The conjugate function of a proper function $f$ is defined as
\begin{equation} \label{def:conjugate}
   \centering
  \begin{aligned}
     f^*(u) := \sup_{x} \{ \langle u,x\rangle  -f(x) \}.
  \end{aligned}
\end{equation}
From~\cite[Theorem~11.1]{rockafellar1998}, $f^*$ is proper, lower semicontinuous (lsc) and convex.
\begin{proposition}\label{prop:conj} 
If $f$ is proper, lsc and convex, then $\partial f^* = (\partial f)^{-1}$ and $\partial f=(\partial f^*)^{-1}$. Or
 \begin{equation} \label{def:conjugate-subgradient}
   \centering
  \begin{aligned}
     u \in \partial f(x)  \Leftrightarrow x \in \partial f^*(u)  \Leftrightarrow f(x)+f^*(u) = \langle u,x\rangle,
  \end{aligned}
\end{equation}
while $f(x)+f^*(u) \geq \langle u,x\rangle$ holds for all $x,u$~\cite[Proposition~11.3]{rockafellar1998}.
\end{proposition}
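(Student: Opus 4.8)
The plan is to obtain the whole proposition from two ingredients: the elementary Fenchel--Young inequality and the biconjugation identity $f^{**}=f$. First I would note that $f(x)+f^*(u)\ge\langle u,x\rangle$ for all $x,u$ is immediate from the definition~\eqref{def:conjugate}: since $f^*(u)=\sup_{y}\{\langle u,y\rangle-f(y)\}$, taking $y=x$ gives $f^*(u)\ge\langle u,x\rangle-f(x)$. No convexity or lower semicontinuity is needed here, and this already settles the last assertion of the statement.

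Next I would establish the equivalence $u\in\partial f(x)\Leftrightarrow f(x)+f^*(u)=\langle u,x\rangle$. For a proper convex $f$, $u\in\partial f(x)$ means $f(y)\ge f(x)+\langle u,y-x\rangle$ for every $y$, equivalently $\langle u,y\rangle-f(y)\le\langle u,x\rangle-f(x)$ for every $y$; taking the supremum over $y$ yields $f^*(u)\le\langle u,x\rangle-f(x)$, which together with the Fenchel--Young inequality forces equality. Conversely, if $f(x)+f^*(u)=\langle u,x\rangle$, then for every $y$ we have $\langle u,y\rangle-f(y)\le f^*(u)=\langle u,x\rangle-f(x)$, and rearranging this gives exactly the subgradient inequality defining $u\in\partial f(x)$ (note that properness forces $f^*(u)>-\infty$ and $f(x)>-\infty$, so both terms are finite and $x\in\mathrm{dom}f$).

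Then I would invoke the Fenchel--Moreau theorem: because $f$ is proper, lsc and convex, $f^{**}=f$. Since $f^*$ is itself proper, lsc and convex by~\cite[Theorem~11.1]{rockafellar1998}, applying the equivalence of the previous paragraph with $f$ replaced by $f^*$ (and the roles of $x$ and $u$ interchanged) gives $x\in\partial f^*(u)\Leftrightarrow f^*(u)+f^{**}(x)=\langle u,x\rangle\Leftrightarrow f^*(u)+f(x)=\langle u,x\rangle$. Chaining this with the equivalence for $\partial f$ produces the full chain $u\in\partial f(x)\Leftrightarrow x\in\partial f^*(u)\Leftrightarrow f(x)+f^*(u)=\langle u,x\rangle$, and the set identities $\partial f^*=(\partial f)^{-1}$ and $\partial f=(\partial f^*)^{-1}$ are just a restatement of the first equivalence, since $u\in\partial f(x)$ iff $(x,u)\in\partial f$ iff $(u,x)\in\partial f^*$.

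The only nonelementary step is the biconjugation identity $f^{**}=f$, which relies on separating the epigraph of $f$ from points below it by a closed halfspace and therefore genuinely uses both lower semicontinuity and convexity; I would simply cite it rather than reprove it. Everything else is bookkeeping with the supremum defining $f^*$, so no real obstacle remains.
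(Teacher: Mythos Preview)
Your argument is correct and is the standard route to this result: Fenchel--Young from the definition of $f^*$, then the equality case characterizes the subdifferential, and finally Fenchel--Moreau ($f^{**}=f$) gives the symmetry with $\partial f^*$. There is nothing to compare against in the paper, however: the proposition is stated with a direct citation to~\cite[Proposition~11.3]{rockafellar1998} and no proof is given. Your write-up is precisely the argument one finds behind that citation, so in effect you have supplied what the paper chose to quote.
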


Next, we recall the KL property~\cite{bolte2014kl,attouch2010proximal}. 
\begin{definition}\label{def:KL}
The function $f$ is said to have the Kurdyka-Łojasiewicz (KL) property at $\hat{x}\in \mathrm{dom}\partial f$ if there exists $\eta\in (0,\infty]$, a neighborhood $U$ of $\hat{x}$ and a continuous concave function $\phi:[0,\eta)\to [0,\infty)$ such that\\
 \indent (\expandafter{\romannumeral 1}) $\varphi(0) = 0$,\\
 \indent (\expandafter{\romannumeral 2}) $\varphi$ is $C^1$ on $(0,\eta)$,\\
  \indent (\expandafter{\romannumeral 3}) $\varphi'>0$ on $(0,\eta)$;\\
   \indent (\expandafter{\romannumeral 4}) for all $x\in U$ and $f(\hat{x})< f(x) < f(\hat{x})+\eta$, the Kurdyka-Łojasiewicz inequality holds 
\begin{equation}
     \varphi'(f(x)-f(\hat{x})) \mathrm{dist}(0, \partial f(x)) \geq 1.
\end{equation}
 If $f(\cdot)$ satisfies the KL property at each point of $\text{dom} \partial f$, then $f(\cdot)$ is called a KL function.
\end{definition}
Moreover, if $\varphi(x) = a_0 x^{1-\alpha}$, $\alpha\in[0,1)$ and $f(\cdot)$ satisfies the KL property, then  $f(\cdot)$ is said to satisfy KL property with exponent $\alpha$.
We note that the definition~\ref{def:KL} uses general subdifferential $\partial f(x)$, which increases the difficulty of applying it to our problem because a general subgradient is not guaranteed to exist for an upper-$\Ctwo$ function.  

Another important property that we utilize is the subanalyticity of sets and functions, which has been studied extensively due to their prevalence in applications and relationship with KL property~\cite{bierstone1988semianalytic,bolte2007lojasiewicz,kurdyka1998gradients}. For example, a proper, subanalytic, continuous function on a closed domain is a KL function with exponent $\alpha$. We omit the definitions here for brevity of presentation and rely on existing propositions and theorems in our analysis. We only require subanalytic functions and sets in Section~\ref{sec:KLfunctions}.

\section{\normalsize Nonsmooth SQP Algorithm}\label{sec:alg}
Motivated by the discussion in Section~\ref{se:intro} and~\ref{sec:prob}, we make the following assumptions. 
\begin{assumption}\label{assp:upperC2}
	The objective $f(\cdot)$ in~\eqref{eqn:opt0} is Lipschitz continuous and upper-$\Ctwo$. 
\end{assumption}
\noindent In particular, inequality~\eqref{eqn:uppc2-def} is satisfied.
Next, we formalize the smoothness of the constraints, a common assumption in literature~\cite{curtis2012}.
\begin{assumption}\label{assp:boundedHc}
	The functions $c_i(\cdot)$ are continuously differentiable with Lipschitz continuous gradient. Namely, there exists a constant $H\geq 0$ such that 
\begin{equation} \label{eqn:l-smooth-constraint}
 \centering
  \begin{aligned}
	  |c_i(x') - c_i(x) - \nabla c_i(x)^T (x'-x) | \leq&  \frac{H}{2} \norm{x-x'}^2, \\
  \end{aligned}
\end{equation}
	for all feasible $x,x'$ and $i\in \mathcal{E}\cup\mathcal{I}$~\cite{nesterovconvex2003}.
\end{assumption}

\subsection{Algorithm description}\label{sec:alg-1}
At iteration $k$, the objective $f$ is approximated with a local quadratic function $m_k(\cdot)$, defined as
\begin{equation} \label{eqn:opt-appx-x}
 \centering
  \begin{aligned}
	  m_k(x) = f(x_k) + g_k^T(x-x_k) + \frac{1}{2}(x-x_k)^T B_k (x-x_k),
  \end{aligned}
\end{equation}
where $g_k\in\bar{\partial} f(x_k)$, and $B_k$ is a bounded, symmetric, and positive definite matrix. 
Denoting $d=x-x_k$, $m_k(x)$ can be rewritten as $M_k(d)$: 
\begin{equation} \label{eqn:opt-rc-appx}
 \centering
  \begin{aligned}
	  M_k(d) &= f(x_k) + g_k^T d +\frac{1}{2} d^T B_k d.\\ 
  \end{aligned}
\end{equation}
The function value and subgradient at $x_k$ are exact, \textit{i.e.}, $M_k(0)=f(x_k),\nabla M_k(0)=g_k$.
The optimization subproblem is
\begin{equation} \label{eqn:opt-sqp-sub}
 \centering
  \begin{aligned}
   &\underset{\substack{d}}{\text{minimize}} 
	  & & M_k(d)\\
   &\text{subject to}
	  & & c_i(x_k) + \nabla c_i(x_k)^T d= 0, \ i\in\mathcal{E}\\ 
	  &&& c_i(x_k) +\nabla c_i(x_k)^T d \geq 0, \ i\in\mathcal{I}.
  \end{aligned}
\end{equation}
It is possible that the linearized constraints in~\eqref{eqn:opt-sqp-sub} are infeasible. 
Hence, similar to S$\ell_1$QP algorithm~\cite{fletcher2013}, we solve the following relaxed quadratic programming problem:
\begin{equation} \label{eqn:opt-sqp-penal}
 \centering
  \begin{aligned}
   &\underset{\substack{d,v,w,t}}{\text{min}} 
	  & & M_k(d) +\theta_k \sum_{i\in\mathcal{E}} (v_i + w_i) +\theta_k \sum_{i\in\mathcal{I}} t_i\\
   &\text{s.t.}
	  & & c_i(x_k)+\nabla c_i(x_k)^T d = v_i - w_i, \ i\in \mathcal{E},\\
	  &&& c_i(x_k)+\nabla c_i(x_k)^T d \geq -t_i, \ i\in\mathcal{I},\\
	  &&& v, w,t\geq 0,
  \end{aligned}
\end{equation}
where $v,w \in\Rbb^{m_e}$ and $t\in\Rbb^{m_i}$ are slack variables and $\theta_k>0$ is a penalty parameter. 
Denote the feasible region of~\eqref{eqn:opt-sqp-penal} as $\Omega_k$. We note that if the feasible region of~\eqref{eqn:opt-sqp-sub} 
is bounded for $x$, the slack variables in~\eqref{eqn:opt-sqp-penal} are effectively bounded as well. It is then reasonable to assume $\Omega_k$ to be bounded for $x,v,w,t$ given a bounded $\Omega$.

Let $d_k$, $v^k$, $w^k$ and $t^k$ denote the solutions to~\eqref{eqn:opt-sqp-penal}.
The first-order optimality conditions of problem~\eqref{eqn:opt-sqp-penal} for $d_k$ are
\begin{equation} \label{eqn:sqp-penal-KKT-1}
  \centering
   \begin{aligned}
	   g_k + B_k d_k - \sum_{i\in\mathcal{E}\cup \mathcal{I}} \lambda^{k+1}_i \nabla c_i(x_k) =0&,\\
	   c_i(x_k)+\nabla c_i(x_k)^T d_k - v^k_i + w^k_i =0, \ i\in\mathcal{E}&,\\
	   \lambda_i^{k+1} [c_i(x_k)+\nabla c_i(x_k)^T d_k + t^k_i] = 0, \ i\in\mathcal{I}&,\\
	   \lambda_i^{k+1}, c_i(x_k)+\nabla c_i(x_k)^T d_k + t^k_i\geq 0, \ i\in\mathcal{I}&.\\
   \end{aligned}
 \end{equation}
Here, $\lambda^{k+1}\in\Rbb^m$ is the Lagrange multiplier vector.
The remaining optimality conditions on slack variables $v^k$, $w^k$ and $t^k$ are 
\begin{equation} \label{eqn:sqp-penal-KKT-2}
  \centering
   \begin{aligned}
	   \theta_k + \lambda^{k+1}_i - p^{k+1}_i =0, \ \theta_k -\lambda^{k+1}_i-q^{k+1}_i=0, \ i\in \mathcal{E},\\
	   \theta_k - \lambda^{k+1}_i-r^{k+1}_i=0, \ i\in\mathcal{I},\\
	   p^{k+1}_i v^k_i  = 0, \ q^{k+1}_i w^k_i = 0, i\in \mathcal{E}, \ r_i^{k+1} t^k_i=0, \ i\in \mathcal{I}&,\\
	   v^k,w^k,t^k,p^{k+1},q^{k+1},r^{k+1} \geq 0&,\\
   \end{aligned}
 \end{equation}
where $p^{k+1}\in\Rbb^{m_e}$ and $q^{k+1}\in\Rbb^{m_e}$ are the Lagrange multipliers for bound constraints on $v^k$ and $w^k$, respectively, while $r^{k+1}\in\Rbb^{m_i}$ is the Lagrange multiplier for $t^k$.

To simplify notations, we define $[y]^-=\max\{0,-y\}$.
The constraint violation function is defined as 
\begin{equation} \label{eqn:sqp-cons-violation}
  \centering
    \begin{aligned}
	    v(x) = \sum_{i\in\mathcal{E}} |c_i(x)| + \sum_{i\in\mathcal{I}} [c_i(x)]^-.
    \end{aligned}
\end{equation}
Notice that $v(x)\geq 0$ and $v(x)=0$ only if $x\in\Omega$, .

To measure progress in both the objective and constraints, 
a $\ell_1$ merit function is adopted in the form of
\begin{equation} \label{eqn:opt-sqp-merit}
 \centering
  \begin{aligned}
	  \phi(x,\theta_k) = r(x) + \theta_k v(x). \\
  \end{aligned}
\end{equation}

Let the line search step size be $\alpha_k \in (0,1]$. 
The $k+1$ iterate is $x_{k+1} = x_k + \alpha_k d_k$.
We adopt the following classic line search criterion 
\begin{equation}\label{eqn:line-search-cond}
   \begin{aligned}
   \centering
       \phi(x_k,\theta_k) - \phi(x_{k+1},\theta_k) 
	   \geq \eta \alpha_k \frac{1}{2} d_k^T B_k d_k, 
   \end{aligned}
\end{equation}
where $\eta\in (0,1)$ is the line search coefficient.

To proceed, we define the sign function $\sigma_i^k:\Rbb^n\to\Rbb$, $i\in\mathcal{E}$  
based on whether the slack variable bound constraints are active:
\begin{equation} \label{eqn:opt-sqp-sign}
 \centering
  \begin{aligned}
	  \sigma_i^k(d)  =  
	  \begin{cases}
		  -1,  
		       & c_i(x_k)+\nabla c_i(x_k)^T d < 0, \\  
		  \phantom{-}0,
		  & c_i(x_k) + \nabla c_i(x_k)^T d = 0,\\
		  \phantom{-}1,
		  & c_i(x_k) + \nabla c_i(x_k)^T d > 0.
    \end{cases}
  \end{aligned}
\end{equation}
We define for $d_k$
the consistent linearized constraint set as
\begin{equation} \label{eqn:opt-sqp-fea-A}
   \centering
    \begin{aligned}
	    A_k = \{ i\in\mathcal{E}|v_i^k=w_i^k = 0\}\cup 
	    \{i\in\mathcal{I}|t_i^k = 0\},
     \end{aligned}
\end{equation}
and the inconsistent linearized constraint set as
\begin{equation} \label{eqn:opt-sqp-fea-I}
  \centering
    \begin{aligned}
	    V_k = \{ i\in\mathcal{E}| v_i^kw_i^k \neq 0\} \cup
	    \{i\in\mathcal{I}|t_i^k > 0\}.
    \end{aligned}
\end{equation}
Further, define $A_k^e = \mathcal{E}\cap A_k$, $A_k^i=\mathcal{I}\cap A_k$, as well as
$V_k^e = \mathcal{E}\cap V_k$, $V_k^i=\mathcal{I}\cap V_k$.
Our nonsmooth line search SQP algorithm is presented in Algorithm~\ref{alg:sqp}.
\begin{algorithm}
   \caption{Nonsmooth SQP with line search}\label{alg:sqp}
	\begin{algorithmic}[1]
	    \STATE{Initialize $x_0$, $B_0$, $\theta_0$, stopping error tolerance $\epsilon$, $\epsilon_c$, and $k=0$.
		Choose scalars $0<\eta < 1$, $\tau_{\alpha}\in (0,1)$ and $\gamma>0$. }
	\FOR{$k=0,1,2,...$}
	  \STATE{Evaluate the function value $f(x_k)$ and subgradient $g_k\in \bar{\partial}f(x_k)$.}
	  \STATE{Form the quadratic function $M_k$ in~\eqref{eqn:opt-rc-appx} and solve 
		subproblem~\eqref{eqn:opt-sqp-penal} to obtain $d_{k}$ and Lagrange multiplier $\lambda^{k+1}$.} 
	  \IF{$\norm{d_k}\leq \epsilon$ \text{and} $v(x_k)\leq \epsilon_c$}
	    \STATE{Stop the iteration and exit the algorithm.}
	  \ENDIF
	   \STATE{Set $\theta_{k+1} = \max{\{\theta_{k}, \norm{\lambda^{k+1}}_{\infty}+\gamma\}}$. }
	   \STATE{Find the line search step size $\alpha_k>0$ using backtracking, starting at $\alpha_k=1$ and reducing $\alpha_k$ by 
		$\alpha_k = \tau_{\alpha} \alpha_k$ until the conditions in~\eqref{eqn:line-search-cond} are satisfied.}
           \STATE{Take the step $x_{k+1} = x_k+\alpha_k d_k$.}
	    \STATE{Call the chosen $B_k$ update rules to obtain $B_{k+1}$.}
       \ENDFOR
    \end{algorithmic}
\end{algorithm}


\subsection{Global convergence analysis}\label{sec:alg-convg}
Following discussions in Section~\ref{se:intro} and the authors' previous paper~\cite{wang2022}, 
we assume a bounded feasible region $\Omega_k$ throughout the iterations, instead of assumptions such as level-boundedness of $f(\cdot)$. 
We note again that a bounded $\Omega_k$ can be easily achieved if $x$ is bounded above and below, which is the case in many engineering applications, \textit{e.g}, power grid optimization.  
Since $B_k$ no longer approximates a Hessian, it can be required to be positive definite. The iterative assumptions are summarized below. 
\begin{assumption}\label{assp:Bk}
    The feasible region $\Omega$ and $\Omega_k$ are bounded for all $k$.  
     The matrix $B_k$ is bounded and positive-definite. Thus, there exists $b>0$ such that 
        $d^T B_k d \geq b \norm{d}^2$ for all $k$ and $d\in\Rbb^n$.
\end{assumption}
Assumptions~\ref{assp:upperC2},~\ref{assp:boundedHc} and~\ref{assp:Bk} are assumed valid throughout the analysis.
If the algorithm terminates in a finite number of steps, the stopping test at step 4 is satisfied with the error tolerance $\epsilon,\epsilon_c$. 
Let $\epsilon=\epsilon_c=0$, then $\norm{d_k}= 0$ and $v(x_k)=0$.
Since $d_k$ solves~\eqref{eqn:opt-sqp-penal}, optimality conditions in~\eqref{eqn:sqp-penal-KKT-1}
are satisfied, of which the first equation reduces to 
\begin{equation}\label{alg:finite-steps}
 \begin{aligned}
	 g_k - \sum_{i\in \mathcal{E}\cup\mathcal{I}} \lambda^{k+1}_i \nabla c_i(x_k) =0.
 \end{aligned}
\end{equation}
Given $g_k\in\bar{\partial} f(x_k)$, we have $0\in \bar{\partial} f(x_k) - \sum_{i\in\mathcal{E}\cup\mathcal{I}} \lambda^{k+1}_i\nabla c(x_k) $. 
In addition,  $x_k$ is feasible.
From the remaining equations in~\eqref{eqn:sqp-penal-KKT-1} 
and~\eqref{eqn:sqp-penal-KKT-2}, $x_k$ satisfies~\eqref{eqn:opt0-KKT} and is by definition a KKT point for~\eqref{eqn:opt0} as the algorithm exits.

In what follows, the analysis is focused on the case with an infinite number of steps, \textit{i.e.}, $\norm{d_k}>0$ or $v(x_k) >0$. 
We start with the following lemma that establishes the relationship between $\lambda^{k+1}$ and $\theta_k$.
\begin{lemma}\label{lem:lambda-sqp-prop}
	If $i\in V_k^e$, then $\lambda_i^{k+1} = -\sigma_i^k(d_k)\theta_k$. 
	If $i\in A_k^e$, then $\lambda_i^{k+1}\in [-\theta_k,\theta_k]$.
	Similarly, if $i\in V_k^i$, then $\lambda_i^{k+1} = \theta_k$.
	If $i\in A_k^i$, then $\lambda_i^{k+1}\in [0,\theta_k)$.
\end{lemma}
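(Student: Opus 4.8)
The plan is to read the multipliers directly off the KKT system \eqref{eqn:sqp-penal-KKT-1}--\eqref{eqn:sqp-penal-KKT-2}, using the sign pattern of the slack variables that defines the index sets $A_k$ and $V_k$. The whole argument is a case analysis on whether a given constraint index lies in the consistent set (its slacks vanish) or the inconsistent set (its slacks are strictly positive), combined with the complementarity relations linking the slack multipliers $p^{k+1},q^{k+1},r^{k+1}$ to $\theta_k\pm\lambda_i^{k+1}$.

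First I would treat the equality constraints. Fix $i\in\mathcal{E}$. From the first two lines of \eqref{eqn:sqp-penal-KKT-2} we always have $p_i^{k+1}=\theta_k+\lambda_i^{k+1}\ge 0$ and $q_i^{k+1}=\theta_k-\lambda_i^{k+1}\ge 0$, which immediately gives $\lambda_i^{k+1}\in[-\theta_k,\theta_k]$; this handles the case $i\in A_k^e$ with no further work. Now suppose $i\in V_k^e$, i.e. $v_i^k w_i^k\ne 0$, so both $v_i^k>0$ and $w_i^k>0$. The complementarity conditions $p_i^{k+1}v_i^k=0$ and $q_i^{k+1}w_i^k=0$ then force $p_i^{k+1}=q_i^{k+1}=0$, hence $\theta_k+\lambda_i^{k+1}=0$ and $\theta_k-\lambda_i^{k+1}=0$ simultaneously — but that is impossible for $\theta_k>0$. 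The resolution is that $v_i^k$ and $w_i^k$ cannot both be positive at an optimal solution of \eqref{eqn:opt-sqp-penal}: since they appear only through $v_i-w_i$ in the constraint and through $v_i+w_i$ (with positive coefficient $\theta_k$) in the objective, one can always decrease both by $\min\{v_i^k,w_i^k\}$ without changing feasibility while strictly decreasing the objective. So the genuine reading of $V_k^e$ is ``exactly one of $v_i^k,w_i^k$ is positive''; if $v_i^k>0$ then $c_i(x_k)+\nabla c_i(x_k)^Td_k=v_i^k>0$, so $\sigma_i^k(d_k)=1$, complementarity gives $p_i^{k+1}=0$, hence $\lambda_i^{k+1}=-\theta_k=-\sigma_i^k(d_k)\theta_k$; symmetrically if $w_i^k>0$ then $\sigma_i^k(d_k)=-1$ and $q_i^{k+1}=0$ gives $\lambda_i^{k+1}=\theta_k=-\sigma_i^k(d_k)\theta_k$. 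This is the identity claimed. I anticipate this normalization of $V_k^e$ is the subtle point, and I would either note it explicitly or assume (as is standard for the $S\ell_1$QP relaxation) that the solver returns a solution with at most one of each complementary slack pair nonzero.

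For the inequality constraints, fix $i\in\mathcal{I}$. The third line of \eqref{eqn:sqp-penal-KKT-2} gives $r_i^{k+1}=\theta_k-\lambda_i^{k+1}$, and together with $r_i^{k+1}\ge 0$ and $\lambda_i^{k+1}\ge 0$ (from \eqref{eqn:sqp-penal-KKT-1}) this yields $\lambda_i^{k+1}\in[0,\theta_k]$. If $i\in V_k^i$, then $t_i^k>0$, so $r_i^{k+1}t_i^k=0$ forces $r_i^{k+1}=0$, hence $\lambda_i^{k+1}=\theta_k$. If $i\in A_k^i$, then $t_i^k=0$; to get the strict upper bound $\lambda_i^{k+1}<\theta_k$ I would argue by contradiction: if $\lambda_i^{k+1}=\theta_k$ then $r_i^{k+1}=0$, and I would look at the optimality of $t_i$ itself — the reduced cost of $t_i$ is $\theta_k - \lambda_i^{k+1} - r_i^{k+1}$, and when this is zero with $t_i^k$ at its lower bound the constraint $c_i(x_k)+\nabla c_i(x_k)^Td_k+t_i^k\ge 0$ would be active with $t_i^k=0$, meaning $c_i(x_k)+\nabla c_i(x_k)^Td_k=0$; this is exactly the boundary between $A_k$ and $V_k$, and one can absorb an infinitesimal of $t_i$ to reduce the penalty, contradicting optimality unless $\lambda_i^{k+1}<\theta_k$. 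Wrapping up, the main obstacle is not any single inequality — each follows from one complementarity relation — but rather pinning down precisely what the definitions of $A_k,V_k$ mean for a solution returned by \eqref{eqn:opt-sqp-penal}, in particular ruling out the degenerate ``both slacks positive'' situation and justifying the strictness in the $A_k^i$ case; once those conventions are fixed the four claims drop out directly from \eqref{eqn:sqp-penal-KKT-1}--\eqref{eqn:sqp-penal-KKT-2}.
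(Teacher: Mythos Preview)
Your approach is the same as the paper's: a direct case analysis on the KKT system \eqref{eqn:sqp-penal-KKT-1}--\eqref{eqn:sqp-penal-KKT-2}. Your handling of the equality constraints matches the paper almost exactly; in particular, your observation that an optimal solution of \eqref{eqn:opt-sqp-penal} cannot have $v_i^k>0$ and $w_i^k>0$ simultaneously is precisely the paper's opening remark ``$v_i^k w_i^k=0$'', and your sub-case split on which slack is positive is identical to the paper's split on the value of $\sigma_i^k(d_k)$. For $A_k^e$ your argument (directly from $p_i^{k+1},q_i^{k+1}\ge 0$) is if anything slightly cleaner than the paper's algebraic manipulation, but it is the same content. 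The $V_k^i$ case is also identical.

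The one genuine gap is your contradiction argument for the strict bound $\lambda_i^{k+1}<\theta_k$ when $i\in A_k^i$. You claim that if $\lambda_i^{k+1}=\theta_k$ one could ``absorb an infinitesimal of $t_i$ to reduce the penalty'', but this cannot work: $t_i^k=0$ is already at its lower bound, and increasing $t_i$ by $\epsilon>0$ raises the objective by $\theta_k\epsilon$ while only slackening the linearized constraint. There is no feasible perturbation of $t_i$ alone that decreases the objective, so no contradiction arises. In fact nothing in the KKT system \eqref{eqn:sqp-penal-KKT-1}--\eqref{eqn:sqp-penal-KKT-2} by itself rules out the degenerate situation $t_i^k=0$, $r_i^{k+1}=0$, $\lambda_i^{k+1}=\theta_k$. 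The paper's own proof does not argue this point either: it simply asserts ``if $i\in A_k^i$, then $r_i^{k+1}>0$'' and reads off the strict inequality. So your proof and the paper's are on equal footing here; the strictness appears to rely on an implicit non-degeneracy convention, and in any case the subsequent lemmas only use the non-strict bound $\lambda_i^{k+1}\le\theta_k$.
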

\begin{proof}
	Note first that for any $ i\in\mathcal{E}$, the slack variable solutions satisfy $v^k_i w^k_i = 0$. 
	We consider the three cases given by the value of $\sigma_i^k(d_k),i\in\mathcal{E}$.
	If $\sigma_i^k(d_k)=1$, by the second equation in~\eqref{eqn:sqp-penal-KKT-1}, $v_i^k>0$ and $w_i^k=0$.
	From the third line in~\eqref{eqn:sqp-penal-KKT-2}, 
        $p^{k+1}_i=0$. Then, by the first equation in~\eqref{eqn:sqp-penal-KKT-2}, $\lambda_i^{k+1}=-\theta_k$.
	Similarly, if $\sigma_i^k(d_k)=-1$, 
        one obtains $q^{k+1}_i=0$ and $\lambda_i^{k+1}=\theta_k$. The first part of the Lemma is proven.

	If $i\in  A_k^e$, then $\sigma_i^k(d_k)=0$ by the second equation in~\eqref{eqn:sqp-penal-KKT-1}.
We sum and subtract the equations on the first line in~\eqref{eqn:sqp-penal-KKT-2} to obtain 
		$\lambda_i^{k+1} = \frac{1}{2}(p_i^{k+1}-q_i^{k+1})$ and 
		$p_i^{k+1}+q_i^{k+1}=2\theta_k$. 
	Since $p^{k+1}\geq 0$ and $q^{k+1}\geq 0$, the latter equation implies $0\leq p_i^{k+1}$ and $q_i^{k+1}\leq 2\theta_k$; finally,
	the former equation implies $\lambda_i^{k+1}\in [-\theta_k,\theta_k]$.

	If $i\in  V_k^i$, then $r_i^{k+1} = 0$ from the third line in~\eqref{eqn:sqp-penal-KKT-2}. Thus, 
	$\lambda_i^{k+1} = \theta_k$ from the second equation in~\eqref{eqn:sqp-penal-KKT-2}.
	If $i\in A_k^i$, then $r_i^{k+1}>0$ and by the second line of~\eqref{eqn:sqp-penal-KKT-2} $\lambda_i^{k+1} \in [0 ,\theta_k)$.
\end{proof}

\begin{remark}\label{rmrk:realalpha}
	We do not specify how $B_k$ is updated in this paper. Given the nonsmooth nature of $f(\cdot)$, 
        an approximation to Hessian is not possible. 
	We do encourage a diagonal matrix so that the sparsity structure of the linear system is preserved~\cite{wang2022}.
\end{remark}

\begin{lemma}\label{lem:line-search-eqcons}
	The equality constraints satisfy the following inequality 
       \begin{equation} \label{eqn:line-search-cons-eq}
         \centering
         \begin{aligned}
		 \theta_k \sum_{i\in\mathcal{E}}|c_i(x_k)| - \theta_k \sum_{i\in\mathcal{E}}|c_i(x_{k+1})| - \alpha_k \sum_{i\in\mathcal{E}} \lambda^{k+1}_i\nabla c_i(x_k)^T d_k
                     \geq -  \frac{1}{2}\theta_k \alpha_k^2 c_h^e \norm{d_k}^2,
      \end{aligned}
     \end{equation}
	where $c_h^e>0$ is a constant coefficient.
\end{lemma}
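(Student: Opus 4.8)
The plan is to expand $c_i(x_{k+1})$ along the step $\alpha_k d_k$, use the linearized feasibility equations from~\eqref{eqn:sqp-penal-KKT-1} to express $\nabla c_i(x_k)^T d_k$ in terms of the slacks $v_i^k, w_i^k$ and $c_i(x_k)$, and then regroup so that all nonnegative contributions are discarded and only the second-order linearization error survives on the right-hand side.

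First I would use Assumption~\ref{assp:boundedHc} with $x' = x_{k+1} = x_k + \alpha_k d_k$ to write, for each $i \in \mathcal{E}$, $c_i(x_{k+1}) = c_i(x_k) + \alpha_k \nabla c_i(x_k)^T d_k + e_i^k$ with $|e_i^k| \le \frac{H}{2}\alpha_k^2\norm{d_k}^2$. The second block of~\eqref{eqn:sqp-penal-KKT-1} gives $c_i(x_k) + \nabla c_i(x_k)^T d_k = v_i^k - w_i^k$, hence $\nabla c_i(x_k)^T d_k = v_i^k - w_i^k - c_i(x_k)$ and $c_i(x_{k+1}) = (1-\alpha_k)c_i(x_k) + \alpha_k(v_i^k - w_i^k) + e_i^k$. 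Since $\alpha_k \in (0,1]$ and $v_i^k, w_i^k \ge 0$, the triangle inequality yields $|c_i(x_{k+1})| \le (1-\alpha_k)|c_i(x_k)| + \alpha_k(v_i^k + w_i^k) + \frac{H}{2}\alpha_k^2\norm{d_k}^2$, so $\theta_k|c_i(x_k)| - \theta_k|c_i(x_{k+1})| \ge \theta_k\alpha_k|c_i(x_k)| - \theta_k\alpha_k(v_i^k + w_i^k) - \frac{H}{2}\theta_k\alpha_k^2\norm{d_k}^2$. Substituting the same expression for $\nabla c_i(x_k)^T d_k$ into $-\alpha_k\lambda_i^{k+1}\nabla c_i(x_k)^T d_k = \alpha_k\lambda_i^{k+1}c_i(x_k) - \alpha_k\lambda_i^{k+1}(v_i^k - w_i^k)$ and summing over $i \in \mathcal{E}$, the left-hand side of~\eqref{eqn:line-search-cons-eq} is bounded below by $\sum_{i\in\mathcal{E}}\alpha_k\bigl[\theta_k|c_i(x_k)| + \lambda_i^{k+1}c_i(x_k)\bigr] - \sum_{i\in\mathcal{E}}\alpha_k\bigl[\theta_k(v_i^k + w_i^k) + \lambda_i^{k+1}(v_i^k - w_i^k)\bigr] - \frac{H}{2}|\mathcal{E}|\theta_k\alpha_k^2\norm{d_k}^2$.

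It then remains to show both bracketed sums are nonnegative, after which setting $c_h^e = \max\{|\mathcal{E}|H, 1\}$ finishes the proof. For the first sum, $\theta_k|c_i(x_k)| + \lambda_i^{k+1}c_i(x_k) \ge (\theta_k - |\lambda_i^{k+1}|)|c_i(x_k)| \ge 0$, because Lemma~\ref{lem:lambda-sqp-prop} gives $|\lambda_i^{k+1}| \le \theta_k$ for every $i \in \mathcal{E}$. For the second, I would split on $A_k^e$ and $V_k^e$: if $i \in A_k^e$ then $v_i^k = w_i^k = 0$ and the term vanishes; if $i \in V_k^e$ then $v_i^k w_i^k = 0$ forces exactly one slack to be positive, and Lemma~\ref{lem:lambda-sqp-prop} gives $\lambda_i^{k+1} = -\sigma_i^k(d_k)\theta_k$, so a two-case check ($v_i^k > 0, w_i^k = 0, \lambda_i^{k+1} = -\theta_k$, versus $v_i^k = 0, w_i^k > 0, \lambda_i^{k+1} = \theta_k$) makes $\theta_k(v_i^k + w_i^k) + \lambda_i^{k+1}(v_i^k - w_i^k) = 0$. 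The calculation is mostly bookkeeping; the only delicate point is this last case analysis linking the slacks to the multipliers through Lemma~\ref{lem:lambda-sqp-prop}, in particular using that $v_i^k w_i^k = 0$ always holds so that an ``inconsistent'' index $i \in V_k^e$ still has a single active slack and the penalty and linearization contributions cancel exactly rather than merely carrying a favorable sign.
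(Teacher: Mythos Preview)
Your argument is correct and follows essentially the same strategy as the paper: expand $c_i(x_{k+1})$ with the Lipschitz remainder, invoke the linearized equality $c_i(x_k)+\nabla c_i(x_k)^T d_k = v_i^k-w_i^k$, and use Lemma~\ref{lem:lambda-sqp-prop} to control the multiplier terms. The paper organizes the bookkeeping through the sign function $\sigma_i^k$ and splits early into $A_k^e$ and $V_k^e$, whereas you work directly with the slack variables and postpone the split until the very end; the two are equivalent, and your route is arguably a bit more direct since the slacks are what appear in the KKT system~\eqref{eqn:sqp-penal-KKT-1}.

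One wording issue to fix: you write that it remains to show ``both bracketed sums are nonnegative,'' but nonnegativity of the \emph{second} sum would push the wrong way, since it enters with a minus sign. What you actually need (and what you in fact prove) is that the first sum is nonnegative while the second sum is \emph{exactly zero}: it vanishes on $A_k^e$ trivially, and on $V_k^e$ your two-case check gives $\theta_k(v_i^k+w_i^k)+\lambda_i^{k+1}(v_i^k-w_i^k)=0$. State it that way and the logic is clean.
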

\begin{proof}
	By Assumption~\ref{assp:boundedHc}, for $x_{k+1}=x_k+\alpha_k d_k$,
       \begin{equation} \label{eqn:sqp-c-ls-pf-1}
         \centering
         \begin{aligned}
		 |c_i(x_{k+1})  - c_i(x_k)- \alpha_k \nabla c_i(x_k)^T d_{k}| \leq \frac{1}{2}\alpha_k^2 H \norm{d_k}^2.\\
      \end{aligned}
     \end{equation}
	By triangle inequality, we have
   \begin{equation} \label{eqn:sqp-c-ls-pf-2}
   \centering
    \begin{aligned}
	    |c_i(x_{k+1})|  
	    &\leq | c_i(x_k) + \alpha_k \nabla c_i(x_k)^T d_{k}| + \frac{1}{2}\alpha_k^2 H \norm{d_k}^2\\
	     &= \left|(1-\alpha_k) c_i(x_k) + \alpha_k \left[c_i(x_k)+ \nabla c_i(x_k)^T d_{k}\right]\right|+\frac{1}{2}\alpha_k^2 H \norm{d_k}^2\\ 
	      & \leq (1-\alpha_k)| c_i(x_k)| + \alpha_k |c_i(x_k)+ \nabla c_i(x_k)^T d_{k}|+ \frac{1}{2}\alpha_k^2 H \norm{d_k}^2 . 
    \end{aligned}
   \end{equation}
	Applying the definition~\eqref{eqn:opt-sqp-sign} of $\sigma_j^k$ to~\eqref{eqn:sqp-c-ls-pf-2}, we can write 
\begin{equation} \label{eqn:sqp-c-ls-pf-2.3}
   \centering
    \begin{aligned}
	    |c_i(x_{k+1})|  
		  & \leq (1-\alpha_k)|c_i(x_k)| + \alpha_k \sigma_i^k(d_k)\left[c_i(x_k)+ \nabla c_i(x_k)^T d_{k}\right] + \frac{1}{2}\alpha_k^2 H\norm{d_k}^2.
    \end{aligned}
   \end{equation}
	Let us denote the cardinality of $A_k^e$ and $V_k^e$ by $m_e^a$ and $m_e^v$, respectively. By summing up~\eqref{eqn:sqp-c-ls-pf-2.3} over $i\in V_k^e$ and noting that $|c_i(x_k)|\geq \sigma_i^k(d_k) c_i(x_k)$, we have
   \begin{equation} \label{eqn:sqp-c-ls-pf-2.5}
     \centering
      \begin{aligned}
	      \sum_{i\in  V_k^e}|c_i(x_{k+1})|  \leq& 
	       \sum_{i \in V_k^e}  |c_i(x_k)|+\alpha_k\sum_{i \in  V_k^e} \sigma_i^k(d_k)\nabla c_i(x_k)^Td_k 
	      + \frac{1}{2}m_e^v\alpha_k^2 H \norm{d_k}^2.\\
      \end{aligned}
    \end{equation}
	Similarly, we sum up~\eqref{eqn:sqp-c-ls-pf-2.3} over $i\in A_k^e$ and use the definition in~\eqref{eqn:opt-sqp-fea-A} to write 
   \begin{equation} \label{eqn:sqp-c-ls-pf-2.6}
     \centering
      \begin{aligned}
\sum_{i \in A_k^e}|c_i(x_{k+1})|  \leq& 
	      (1-\alpha_k)\sum_{i \in A_k^e}  |c_i(x_k)| + \frac{1}{2}m_e^a \alpha_k^2 H \norm{d_k}^2. \\
      \end{aligned}
    \end{equation}
    Summing the two equations in~\eqref{eqn:sqp-c-ls-pf-2.5} and~\eqref{eqn:sqp-c-ls-pf-2.6}
    and using $m_e^a+m_e^v=m_e$ gives us 
\begin{equation} \label{eqn:sqp-c-ls-pf-3}
     \centering
      \begin{aligned}
	      \sum_{i\in\mathcal{E}} |c_i(x_k)|-\sum_{i\in\mathcal{E}} |c_i(x_{k+1})|\geq  
	    \alpha_k  \sum_{i \in A_k^e}  |c_i(x_k)|   -\alpha_k \sum_{i \in V_k^e} \sigma_i^k(d_k)\nabla c_i(x_k)^T d_k 
	      - \frac{m_e}{2}\alpha_k^2 H \norm{d_k}^2. \\
      \end{aligned}
    \end{equation}
    From Lemma~\ref{lem:lambda-sqp-prop} and the definition~\eqref{eqn:opt-sqp-fea-A} of $A_k$, we can write
\begin{equation} \label{eqn:sqp-c-ls-pf-3.1}
     \centering
      \begin{aligned}
	      \sum_{i\in \mathcal{E}} \lambda_i^{k+1}\nabla c_i(x_k)^T d_k =& \sum_{i\in V_k^e} \lambda_i^{k+1} \nabla c_i(x_k)^T d_k +\sum_{i\in A_k^e} \lambda_i^{k+1} \nabla c_i(x_k)^T d_k\\
	      =& -\sum_{i\in V_k^e} \sigma_i^k(d_k)\theta_k \nabla c_i(x_k)^T d_k -\sum_{j\in A_k^e} \lambda^{k+1}_i  c_i(x_k).\\
	      \leq& -\sum_{i\in V_k^e} \sigma_i^k(d_k)\theta_k \nabla c_i(x_k)^T d_k + \theta_k\sum_{i\in A_k^e}  |c_i(x_k)|.\\
      \end{aligned}
    \end{equation}
    The inequality in~\eqref{eqn:sqp-c-ls-pf-3.1} comes from the second part of Lemma~\ref{lem:lambda-sqp-prop}, 
    namely $\lambda^{k+1}_i\in [-\theta_k,\theta_k]$ for $i\in A_k^e$.
	Through basic algebraic calculations and applying~\eqref{eqn:sqp-c-ls-pf-3.1} to~\eqref{eqn:sqp-c-ls-pf-3}, 
   \begin{equation} \label{eqn:sqp-c-ls-pf-3.5}
   \centering
    \begin{aligned}
	    &  \theta_k \sum_{i\in\mathcal{E}} |c_i(x_k)|-\theta_k\sum_{i\in\mathcal{E}} |c_i(x_{k+1})| - \alpha_k \sum_{i\in\mathcal{E}}\lambda^{k+1}_i\nabla c_i(x_k)^T d_k
	     \geq \\
	    &\qquad \theta_k \alpha_k \sum_{i \in A_k^e}  |c_i(x_k)|  
	      -\theta_k \alpha_k  \sum_{i\in V_k^e} \sigma_i^k(d_k)\nabla c_i(x_k)^T d_k
	      - \frac{1}{2} \theta_k m_e\alpha_k^2 H \norm{d_k}^2 \\ 
	    & \qquad+ \alpha_k\theta_k\sum_{i\in V_k^e} \sigma_i^k(d_k) \nabla c_i(x_k)^T d_k
	     - \alpha_k \theta_k\sum_{i\in A_k^e} | c_i(x_k) | = 
	    -  \frac{1}{2}\theta_k m_e \alpha_k^2 H \norm{d_k}^2.
    \end{aligned}
   \end{equation}
    Let $c_h^e = m_eH$ to complete the proof.
\end{proof}

\begin{lemma}\label{lem:line-search-ineqcons}
	The inequality constraints satisfy the following inequality 
       \begin{equation} \label{eqn:line-search-cons-ineq}
         \centering
         \begin{aligned}
		 \theta_k \sum_{i\in\mathcal{I}} [c_i(x_k)]^{-} - \theta_k \sum_{i\in\mathcal{I}} [c_i(x_{k+1})]^{-} 
		  - \alpha_k \sum_{i\in\mathcal{I}} \lambda^{k+1}_i\nabla c_i(x_k)^T d_k
                     \geq - \frac{1}{2}\alpha_k^2 \theta_k c_h^i \norm{d_k}^2,
      \end{aligned}
     \end{equation}
	where $c_h^i>0$ is a constant coefficient.
\end{lemma}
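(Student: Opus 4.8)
The plan is to run the argument of Lemma~\ref{lem:line-search-eqcons} almost verbatim, replacing the absolute value $|\cdot|$ by the negative-part operator $[\cdot]^-$ and exploiting that $y\mapsto[y]^-=\max\{0,-y\}$ is nonincreasing, convex, and positively homogeneous (hence sublinear, so $[a-\epsilon]^-\le[a]^-+\epsilon$ for $\epsilon\ge0$). First I would apply Assumption~\ref{assp:boundedHc} at $x_{k+1}=x_k+\alpha_k d_k$ to get the one-sided estimate $c_i(x_{k+1})\ge c_i(x_k)+\alpha_k\nabla c_i(x_k)^T d_k-\tfrac12\alpha_k^2 H\norm{d_k}^2$ for each $i\in\mathcal I$. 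Writing $c_i(x_k)+\alpha_k\nabla c_i(x_k)^T d_k=(1-\alpha_k)c_i(x_k)+\alpha_k\big(c_i(x_k)+\nabla c_i(x_k)^T d_k\big)$ as a convex combination (valid since $\alpha_k\in(0,1]$) and using monotonicity and sublinearity of $[\cdot]^-$ then yields the analog of~\eqref{eqn:sqp-c-ls-pf-2.3},
\[
[c_i(x_{k+1})]^-\le(1-\alpha_k)[c_i(x_k)]^-+\alpha_k\big[c_i(x_k)+\nabla c_i(x_k)^T d_k\big]^-+\tfrac12\alpha_k^2 H\norm{d_k}^2.
\]

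Next I would split $\mathcal I$ into $A_k^i$ and $V_k^i$ from~\eqref{eqn:opt-sqp-fea-A}--\eqref{eqn:opt-sqp-fea-I}, with cardinalities $m_i^a,m_i^v$, $m_i^a+m_i^v=m_i$. For $i\in A_k^i$ we have $t_i^k=0$, so the last line of~\eqref{eqn:sqp-penal-KKT-1} gives $c_i(x_k)+\nabla c_i(x_k)^T d_k\ge0$ and the middle term above vanishes. For $i\in V_k^i$, Lemma~\ref{lem:lambda-sqp-prop} gives $\lambda_i^{k+1}=\theta_k>0$, so complementarity in~\eqref{eqn:sqp-penal-KKT-1} forces $c_i(x_k)+\nabla c_i(x_k)^T d_k=-t_i^k\le0$, whence $\big[c_i(x_k)+\nabla c_i(x_k)^T d_k\big]^-=-\big(c_i(x_k)+\nabla c_i(x_k)^T d_k\big)$. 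Summing over $i\in\mathcal I$ and multiplying by $\theta_k$ produces the counterpart of~\eqref{eqn:sqp-c-ls-pf-3},
\[
\theta_k\!\sum_{i\in\mathcal I}[c_i(x_k)]^- -\theta_k\!\sum_{i\in\mathcal I}[c_i(x_{k+1})]^- \ \ge\ \theta_k\alpha_k\!\sum_{i\in\mathcal I}[c_i(x_k)]^- +\theta_k\alpha_k\!\sum_{i\in V_k^i}\!\big(c_i(x_k)+\nabla c_i(x_k)^T d_k\big) -\tfrac12\theta_k m_i\alpha_k^2 H\norm{d_k}^2 .
\]

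Then, mirroring~\eqref{eqn:sqp-c-ls-pf-3.1}, I would decompose $\alpha_k\sum_{i\in\mathcal I}\lambda_i^{k+1}\nabla c_i(x_k)^T d_k$ over $A_k^i$ and $V_k^i$: on $V_k^i$ use $\lambda_i^{k+1}=\theta_k$; on $A_k^i$ use $t_i^k=0$ with the complementarity $\lambda_i^{k+1}\big(c_i(x_k)+\nabla c_i(x_k)^T d_k\big)=0$ to write $\lambda_i^{k+1}\nabla c_i(x_k)^T d_k=-\lambda_i^{k+1}c_i(x_k)\le\lambda_i^{k+1}[c_i(x_k)]^-\le\theta_k[c_i(x_k)]^-$, the last step because $\lambda_i^{k+1}\in[0,\theta_k)$ by Lemma~\ref{lem:lambda-sqp-prop}. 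Adding the resulting lower bound $-\alpha_k\sum_{i\in\mathcal I}\lambda_i^{k+1}\nabla c_i(x_k)^T d_k\ge-\alpha_k\theta_k\sum_{i\in V_k^i}\nabla c_i(x_k)^T d_k-\alpha_k\theta_k\sum_{i\in A_k^i}[c_i(x_k)]^-$ to the previous display, the $A_k^i$ contributions to $\sum_{\mathcal I}[c_i(x_k)]^-$ cancel, the $\nabla c_i(x_k)^T d_k$ pieces on $V_k^i$ cancel, and the right side collapses to $\theta_k\alpha_k\sum_{i\in V_k^i}\big([c_i(x_k)]^-+c_i(x_k)\big)-\tfrac12\theta_k m_i\alpha_k^2 H\norm{d_k}^2$. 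Since $[y]^-+y=\max\{y,0\}\ge0$, the sum is nonnegative, so the right side is at least $-\tfrac12\theta_k m_i\alpha_k^2 H\norm{d_k}^2$; taking $c_h^i=m_i H$ completes the proof.

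The only genuinely delicate point, as opposed to routine bookkeeping, is the transfer from the two-sided Lipschitz estimate and the absolute value of Lemma~\ref{lem:line-search-eqcons} to the one-sided estimate and $[\cdot]^-$ here — i.e. confirming that $[\cdot]^-$ is monotone nonincreasing and sublinear so the convex-combination splitting still applies — and verifying that the leftover cross terms collapse precisely to the nonnegative quantity $\sum_{i\in V_k^i}\big([c_i(x_k)]^-+c_i(x_k)\big)$ rather than to something of indefinite sign; this is exactly where the sign information $\lambda_i^{k+1}=\theta_k$ and $c_i(x_k)+\nabla c_i(x_k)^T d_k\le0$ on $V_k^i$, and $\lambda_i^{k+1}\ge0$ on $A_k^i$, is used.
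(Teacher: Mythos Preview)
Your proof is correct. It takes a modestly different organizational route from the paper's: you mirror the structure of Lemma~\ref{lem:line-search-eqcons} closely, first exploiting sublinearity and monotonicity of $[\cdot]^-$ on the convex combination to obtain an aggregate bound on $\sum_i[c_i(x_k)]^--\sum_i[c_i(x_{k+1})]^-$, then bounding $\sum_i\lambda_i^{k+1}\nabla c_i(x_k)^T d_k$ separately and combining, with the leftover $\sum_{i\in V_k^i}\big([c_i(x_k)]^-+c_i(x_k)\big)\ge0$ absorbing the cross terms. The paper instead works index by index, establishing the per-index inequality $\theta_k[c_i(x_k)]^--\theta_k[c_i(x_{k+1})]^--\alpha_k\lambda_i^{k+1}\nabla c_i(x_k)^T d_k\ge-\tfrac12\alpha_k^2\theta_k H\norm{d_k}^2$ for each $i\in\mathcal I$ through a case analysis on whether $c_i(x_k)+\nabla c_i(x_k)^T d_k$ is zero or positive, and then sums. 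Both arrive at $c_h^i=m_iH$. Your treatment of $V_k^i$ (using $\lambda_i^{k+1}=\theta_k$ and $c_i(x_k)+\nabla c_i(x_k)^T d_k=-t_i^k<0$, in line with Lemma~\ref{lem:lambda-sqp-prop}) is in fact more careful than the paper's printed argument, which contains an apparent slip in that case; the lemma's conclusion is unaffected either way. The advantage of your approach is a transparent parallel to the equality-constraint lemma; the paper's index-by-index analysis avoids tracking the residual nonnegative sum but requires more sub-cases.
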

\begin{proof}
By Assumption~\ref{assp:boundedHc}, for $x_{k+1}=x_k+\alpha_k d_k$,
       \begin{equation} \label{eqn:sqp-ineq-ls-pf-1}
         \centering
         \begin{aligned}
		 \relax [c_i(x_{k+1})]^- \leq  [c_i(x_k)+ \alpha_k \nabla c_i(x_k)^T d_{k}]^- + \frac{1}{2}\alpha_k^2 H \norm{d_k}^2.\\
      \end{aligned}
     \end{equation}
	Multiplying by $-1$ and adding $[c_i(x_k)]^-$ to~\eqref{eqn:sqp-ineq-ls-pf-1} gives us 
       \begin{equation} \label{eqn:sqp-ineq-ls-pf-1.5}
         \centering
         \begin{aligned}
         \relax [c_i(x_{k})]^- - [c_i(x_{k+1})]^- \geq [c_i(x_{k})]^--[c_i(x_k)+ \alpha_k \nabla c_i(x_k)^T d_{k}]^- - \frac{1}{2}\alpha_k^2 H \norm{d_k}^2.\\
      \end{aligned}
     \end{equation}

	If $i\in A_k^i$, there exists two cases from the definition in~\eqref{eqn:opt-sqp-fea-A}. 
	In the first case, $c_i(x_k)+ \nabla c_i(x_k)^T d_{k} =0 $. 
	From~\eqref{eqn:sqp-ineq-ls-pf-1.5},
\begin{equation} \label{eqn:sqp-ineq-ls-pf-3}
         \centering
         \begin{aligned}
		 \relax [c_i(x_{k})]^- - [c_i(x_{k+1})]^-  \geq&   \alpha_k [c_i(x_{k})]^-  - \frac{1}{2}\alpha_k^2 H \norm{d_k}^2\\
		 \geq& \alpha_k [-\nabla c_i(x_k)^T d_k]^-  - \frac{1}{2}\alpha_k^2 H \norm{d_k}^2\\
		 \geq& \alpha_k\max\{0, \nabla c_i(x_k)^T d_k\}- \frac{1}{2}\alpha_k^2 H \norm{d_k}^2.
      \end{aligned}
     \end{equation}
	Applying Lemma~\ref{lem:lambda-sqp-prop} when $i\in A_k^i$ to~\eqref{eqn:sqp-ineq-ls-pf-3},
\begin{equation} \label{eqn:sqp-ineq-ls-pf-3.5}
         \centering
         \begin{aligned}
		 \theta_k [c_i(x_{k})]^- -\theta_k [c_i(x_{k+1})]^- -  \alpha_k \lambda_i^{k+1} \nabla c_i(x_k)^T d_k
		 \geq - \frac{1}{2}\alpha_k^2 \theta_k H \norm{d_k}^2.\\
      \end{aligned}
     \end{equation}
	In the second case, 
	$c_i(x_k)+ \nabla c_i(x_k)^T d_{k} >0 $, which leads to $\lambda_i^{k+1}=0$ through the complementarity condition. 
        There are multiple scenarios in regards to the sign of $c_i(x_k)$ and $\nabla c_i(x_k)^T d_{k}$. However, it can be easily verified that in all those scenarios, based on~\eqref{eqn:sqp-ineq-ls-pf-1.5}, we have 
	\begin{equation} \label{eqn:sqp-ineq-ls-pf-4}
         \centering
         \begin{aligned}
	\relax [c_i(x_{k})]^- - [c_i(x_{k+1})]^- \geq& - \frac{1}{2}\alpha_k^2 H \norm{d_k}^2.\\
      \end{aligned}
     \end{equation}
	Given that $\lambda_i^{k+1}=0$ in this case, we can multiply both sides of~\eqref{eqn:sqp-ineq-ls-pf-4} by $\theta_k$ to obtain~\eqref{eqn:sqp-ineq-ls-pf-3.5}. Hence,~\eqref{eqn:sqp-ineq-ls-pf-3.5} is satisfied for all $i\in A_k^i$.

	Next, if $i\in V_k^i$, then $c_i(x_k)+ \nabla c_i(x_k)^T d_{k} =t^k_i >0$ and $\lambda^{k+1}_i=0$. The analysis is similar to that of the second case of $i\in A_k^i$ above. And we can come to the same conclusion that~\eqref{eqn:sqp-ineq-ls-pf-3.5} is satisfied for all $i\in V_k^i$.
       Summing all $i\in \mathcal{I}$ and let $c_h^i=m_i H$ completes the proof.
\end{proof}

\begin{lemma}\label{lem:line-search-merit}
	If the Lagrange multipliers $\lambda^{k+1}$ are bounded, then step 8 of Algorithm~\ref{alg:sqp} finds $\alpha_k\in (0,1]$ satisfying the line search condition~\eqref{eqn:line-search-cond} in a finite number of steps.
\end{lemma}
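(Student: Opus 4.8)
The plan is to show that the Armijo-type inequality~\eqref{eqn:line-search-cond} is satisfied by every sufficiently small step size $\alpha_k\in(0,1]$; since the backtracking loop in step~8 starts at $\alpha_k=1$ and multiplies by $\tau_\alpha\in(0,1)$ after each failure, it must then reach an acceptable $\alpha_k$ in finitely many reductions. The degenerate case $d_k=0$ gives $x_{k+1}=x_k$, so both sides of~\eqref{eqn:line-search-cond} vanish and $\alpha_k=1$ works immediately; hence assume $d_k\neq0$ in what follows, and recall that by Assumption~\ref{assp:Bk} the iterates $x_k$ and $x_{k+1}=x_k+\alpha_k d_k$ stay in a common compact set on which $f$ is finite and the upper-$\Ctwo$ estimate applies.

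I would first lower-bound the merit decrease by splitting $\phi(x_k,\theta_k)-\phi(x_{k+1},\theta_k)$, with $\phi$ as in~\eqref{eqn:opt-sqp-merit}, into the objective part $f(x_k)-f(x_{k+1})$ and the penalty part $\theta_k\bigl(v(x_k)-v(x_{k+1})\bigr)$, where $v$ is the constraint-violation function~\eqref{eqn:sqp-cons-violation}. For the objective part, apply the upper-$\Ctwo$ inequality~\eqref{eqn:uppc2-def} at $\bar x=x_k$ with $g=g_k\in\bar\partial f(x_k)$ and $x=x_{k+1}$, giving $f(x_k)-f(x_{k+1})\ge -\alpha_k g_k^Td_k-\tfrac{\rho}{2}\alpha_k^2\norm{d_k}^2$. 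For the penalty part, add the estimates of Lemma~\ref{lem:line-search-eqcons} and Lemma~\ref{lem:line-search-ineqcons} over $\mathcal{E}$ and $\mathcal{I}$, obtaining $\theta_k\bigl(v(x_k)-v(x_{k+1})\bigr)\ge \alpha_k\sum_{i\in\mathcal{E}\cup\mathcal{I}}\lambda_i^{k+1}\nabla c_i(x_k)^Td_k-\tfrac12(c_h^e+c_h^i)\theta_k\alpha_k^2\norm{d_k}^2$.

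The crucial step is then to invoke the stationarity equation in~\eqref{eqn:sqp-penal-KKT-1}, $\sum_{i\in\mathcal{E}\cup\mathcal{I}}\lambda_i^{k+1}\nabla c_i(x_k)=g_k+B_kd_k$, so that $\sum_i\lambda_i^{k+1}\nabla c_i(x_k)^Td_k=g_k^Td_k+d_k^TB_kd_k$. Adding the objective and penalty bounds, the first-order terms $\mp\alpha_k g_k^Td_k$ cancel and one is left with
\[
 \phi(x_k,\theta_k)-\phi(x_{k+1},\theta_k)\;\ge\;\alpha_k\,d_k^TB_kd_k-\tfrac12\bigl(\rho+(c_h^e+c_h^i)\theta_k\bigr)\alpha_k^2\norm{d_k}^2 .
\]
Since $B_k$ is positive definite with $d^TB_kd\ge b\norm{d}^2$ (Assumption~\ref{assp:Bk}), the leading term is linear in $\alpha_k$ and strictly positive while the remainder is $O(\alpha_k^2)$; comparing with the target $\tfrac{\eta}{2}\alpha_k d_k^TB_kd_k$ and using $\eta\in(0,1)$ (so $1-\eta/2>\tfrac12$), inequality~\eqref{eqn:line-search-cond} holds as soon as $\alpha_k\le (1-\eta/2)\,d_k^TB_kd_k\big/\bigl(\tfrac12(\rho+(c_h^e+c_h^i)\theta_k)\norm{d_k}^2\bigr)$, a threshold bounded below by the strictly positive number $(1-\eta/2)\,b\big/\bigl(\tfrac12(\rho+(c_h^e+c_h^i)\theta_k)\bigr)$; the hypothesis that $\lambda^{k+1}$ is bounded (hence $\theta_k$ stays finite, indeed uniformly bounded, via step~7) is what keeps this threshold from degenerating. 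Thus after at most $\lceil\log_{\tau_\alpha}(\text{threshold})\rceil$ backtracking steps (none if the threshold exceeds $1$), step~8 returns an acceptable $\alpha_k\in(0,1]$.

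I expect the main obstacle to be the bookkeeping in the last two paragraphs — in particular, recognizing that the stationarity identity from~\eqref{eqn:sqp-penal-KKT-1} is exactly what turns the sum of the three independently proven estimates (the upper-$\Ctwo$ bound and Lemmas~\ref{lem:line-search-eqcons}--\ref{lem:line-search-ineqcons}) into the clean ``decrease proportional to $\alpha_k d_k^TB_kd_k$ minus a quadratic-in-$\alpha_k$ term'' form, together with checking that the upper-$\Ctwo$ estimate~\eqref{eqn:uppc2-def} is legitimately applicable (iterates confined to a compact set, $g_k$ a genuine Clarke subgradient). Everything after that reduces to the standard backtracking-termination argument.
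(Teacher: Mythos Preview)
Your proposal is correct and follows essentially the same route as the paper: the upper-$\Ctwo$ bound~\eqref{eqn:uppc2-def} on the objective, Lemmas~\ref{lem:line-search-eqcons}--\ref{lem:line-search-ineqcons} on the penalty terms, and the stationarity equation in~\eqref{eqn:sqp-penal-KKT-1} to cancel the $g_k^Td_k$ contributions, yielding a decrease of the form $\alpha_k d_k^TB_kd_k$ minus an $O(\alpha_k^2)$ remainder. The paper's threshold is the slightly coarser $b/(\rho+\theta_k c_h)$ (it drops the $(1-\eta/2)$ factor you retain), but the arguments are otherwise identical.
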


\begin{proof}
      By Assumption~\ref{assp:upperC2}, we have  
	\begin{equation} \label{eqn:sqp-penal-merit-pf-0}
 \centering
  \begin{aligned}
	  f(x_k) - f(x_{k+1}) \geq& -\alpha_k g_{k}^Td_k -\frac{\rho}{2}  \alpha_k^2 \norm{d_k}^2. \\
  \end{aligned}
\end{equation}
	Let us rearrange the first equation in the KKT conditions~\eqref{eqn:sqp-penal-KKT-1} to obtain 
\begin{equation} \label{eqn:sqp-penal-merit-pf-1}
  \centering
   \begin{aligned}
	   g_k + B_k d_k  = \sum_{i\in \mathcal{E}\cup\mathcal{I}} \lambda_i^{k+1} \nabla c_i(x_k).\\
   \end{aligned}
 \end{equation}
	Taking the dot product with $-d_k$ on both sides of~\eqref{eqn:sqp-penal-merit-pf-1} leads to
\begin{equation} \label{eqn:sqp-penal-merit-pf-2}
  \centering
   \begin{aligned}
	   - g_k^Td_k - d_k^T B_k d_k =& -\sum_{i\in \mathcal{E}\cup\mathcal{I}} \lambda^{k+1}_i \nabla c_i(x_k)^T d_k.\\
   \end{aligned}
 \end{equation}
	Since $\alpha_k\in (0,1]$, one can multiply~\eqref{eqn:sqp-penal-merit-pf-2} by $\alpha_k$ and
	apply it to~\eqref{eqn:sqp-penal-merit-pf-0} to obtain
\begin{equation} \label{eqn:sqp-penal-merit-pf-3}
  \centering
   \begin{aligned}
	   f(x_k) - f(x_{k+1}) \geq \alpha_k d_k^T B_k d_k -\frac{\rho}{2}\alpha_k^2\norm{d_k}^2 - \alpha_k \sum_{i\in \mathcal{E}\cup\mathcal{I}}\lambda_i^{k+1} \nabla c_j(x_k)^T d_k
   \end{aligned}
 \end{equation}
	From Lemma~\ref{lem:line-search-eqcons},~\ref{lem:line-search-ineqcons}, Assumption~\ref{assp:Bk} and~\eqref{eqn:sqp-penal-merit-pf-3}, the merit function satisfies
\begin{equation*} \label{eqn:sqp-penal-merit-pf-4} 
 \centering
  \begin{aligned}
	  \phi&(x_k,\theta_k) - \phi(x_{k+1},\theta_k)  = f(x_k) -f(x_{k+1})+ \theta_k v(x_k)-\theta_kv(x_{k+1})\\
			 \geq & \alpha_k d_k^TB_kd_k - \frac{\rho}{2} \alpha_k^2\norm{d_k}^2- \alpha_k \sum_{i\in \mathcal{E}\cup\mathcal{I}} \lambda_i^{k+1} \nabla c_i(x_k)^T d_k + \theta_k v(x_k)	-\theta_k v(x_{k+1})\\
			\geq& \frac{1}{2}\alpha_k d_k^TB_kd_k + \frac{\alpha_k}{2} (b- \rho \alpha_k - \theta_k \alpha_k c_h)  \norm{d_k}^2, \\
  \end{aligned}
\end{equation*}
where $c_h=c_h^i+c_h^e$.
 Since $\lambda^k$ is bounded, 
 there exists $k$ such that $\theta_t=\theta_k>0$ for all $t\geq k$.
 Then step 8 line search completes successfully when $b>(\rho+\theta_k c_h)\alpha_k$ is achieved by reducing $\alpha_k$. 
\end{proof}

In order for the Lagrange multipliers to be bounded, a constraint qualification is necessary.
Given the existence of both equality and inequality constraints, we resort to MFCQ~\cite{Nocedal_book}.
Denote by $\mathcal{A}(x) = \mathcal{E}\cup \{i\in\mathcal{I}|c_i(x) = 0\}$ the active set at a feasible $x$. The MFCQ is defined below.
\begin{definition}\label{assp:MFCQ}
	The constraints in~\eqref{eqn:opt0} satisfy MFCQ at $x$ if the gradients $\nabla c_i(x)$, $i \in\mathcal{E}$ are linearly independent and there exists a vector $w\in \Rbb^n$ such that 
  \begin{equation}\label{def:mfcq}
   \centering
    \begin{aligned}
	\nabla c_i(x)^T w >0,  \ \text{for all} \ i \in \mathcal{A}(x)\cap \mathcal{I}, \\ 
        \nabla c_i(x)^T w = 0, \ \text{for all} \ i \in \mathcal{E}.
    \end{aligned}
  \end{equation}
\end{definition}
\begin{lemma}\label{lem:bounded-lp}
	If MFCQ of the constraints in~\eqref{eqn:opt0} are satisfied at all accumulation points of $\{x_k\}$ generated by the algorithm, then 
	the sequence of Lagrange multipliers $\{\lambda^{k+1}\}$ are bounded. 
	Additionally, there exists $k$ such that $\theta_t = \theta_k$ for all $t\geq k$.
\end{lemma}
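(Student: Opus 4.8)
The plan is to argue by contradiction, using the standard normalization of the multiplier sequence together with MFCQ, after first collecting the boundedness facts already available. First I would record that the quantities entering the stationarity equation in~\eqref{eqn:sqp-penal-KKT-1} are bounded uniformly in $k$: since $\{x_k\}$ lies in a bounded set (its accumulation points are assumed to exist, and this also follows from the bounded $\Omega$ together with the merit-function structure), $g_k\in\bar\partial f(x_k)$ is bounded by local boundedness of the Clarke subdifferential of a Lipschitz function, $B_k$ is bounded by Assumption~\ref{assp:Bk}, and $d_k$ is bounded because $x_k+d_k$ is feasible for the bounded region $\Omega_k$; hence $\norm{g_k+B_kd_k}\le C$ for some constant $C$. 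Also $\lambda_i^{k+1}\ge 0$ for $i\in\mathcal I$ from~\eqref{eqn:sqp-penal-KKT-1}--\eqref{eqn:sqp-penal-KKT-2}.

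Suppose $\{\lambda^{k+1}\}$ is unbounded. I would pick a subsequence $\mathcal K$ along which $\norm{\lambda^{k+1}}\to\infty$ and, refining it, arrange $x_k\to x^*$ for an accumulation point $x^*$ (at which MFCQ holds by hypothesis) and $\lambda^{k+1}/\norm{\lambda^{k+1}}\to\bar\lambda$ with $\norm{\bar\lambda}=1$ and $\bar\lambda_i\ge 0$ for $i\in\mathcal I$. Dividing the first equation of~\eqref{eqn:sqp-penal-KKT-1} by $\norm{\lambda^{k+1}}$ and passing to the limit along $\mathcal K$, using the bound from the previous paragraph and continuity of the $\nabla c_i$, yields $\sum_{i\in\mathcal E\cup\mathcal I}\bar\lambda_i\nabla c_i(x^*)=0$.

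The crux is to show that $\bar\lambda$ is supported on the active set, i.e. $\bar\lambda_i=0$ for every $i\in\mathcal I$ with $c_i(x^*)>0$. If $\bar\lambda_i>0$ then $\lambda_i^{k+1}\to\infty$ along $\mathcal K$, so $\lambda_i^{k+1}>0$ for large $k$, and complementarity in~\eqref{eqn:sqp-penal-KKT-1} forces $c_i(x_k)+\nabla c_i(x_k)^Td_k=-t_i^k\le 0$, i.e. $c_i(x_k)\le-\nabla c_i(x_k)^Td_k$; combined with $c_i(x_k)\to c_i(x^*)>0$ one must now reach a contradiction. I expect this to be the main obstacle, since only MFCQ (not LICQ) is assumed and $x^*$ need not be feasible: the argument has to exploit that the optimal step $d_k$ cannot ``need'' to violate a linearized constraint whose constant term stays bounded away from zero, using the boundedness of $d_k$ coming from $\Omega_k$, the sign/slack relations of Lemma~\ref{lem:lambda-sqp-prop} (so that $i\in V_k^i$ gives $\lambda_i^{k+1}=\theta_k$ with $c_i(x_k)+\nabla c_i(x_k)^Td_k=-t_i^k$, while $i\in A_k^i$ gives $\lambda_i^{k+1}<\theta_k$), and the optimality of $d_k$ in~\eqref{eqn:opt-sqp-penal}. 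In particular, comparing the objective of~\eqref{eqn:opt-sqp-penal} at $d_k$ with its value at $d=0$ gives $\tfrac12 d_k^TB_kd_k\le -g_k^Td_k+\theta_k v(x_k)$, which together with $b\norm{d_k}^2\le d_k^TB_kd_k$ controls $\norm{d_k}$; if, as in a convergent regime, $v(x_k)\to 0$, this makes $\norm{d_k}$ stay bounded (indeed the boundedness becomes sharp), so a blowing-up multiplier cannot attach to an inequality with $c_i(x^*)>0$.

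Granting the support claim, the proof finishes by taking the inner product of $\sum_{i\in\mathcal E\cup\mathcal I}\bar\lambda_i\nabla c_i(x^*)=0$ with the MFCQ vector $w$ of~\eqref{def:mfcq}: the equality-constraint terms vanish, and the remaining sum $\sum_{i\in\mathcal A(x^*)\cap\mathcal I}\bar\lambda_i\,\nabla c_i(x^*)^Tw$ is a sum of nonnegative terms equal to zero, hence $\bar\lambda_i=0$ for all $i\in\mathcal A(x^*)\cap\mathcal I$; then $\sum_{i\in\mathcal E}\bar\lambda_i\nabla c_i(x^*)=0$ with the $\nabla c_i(x^*)$, $i\in\mathcal E$, linearly independent forces $\bar\lambda_i=0$ on $\mathcal E$ as well. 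Thus $\bar\lambda=0$, contradicting $\norm{\bar\lambda}=1$, so $\{\lambda^{k+1}\}$ is bounded, say $\norm{\lambda^{k+1}}_\infty\le\Lambda$ for all $k$. Finally, the update $\theta_{k+1}=\max\{\theta_k,\norm{\lambda^{k+1}}_\infty+\gamma\}$ makes $\{\theta_k\}$ nondecreasing and bounded above by $\max\{\theta_0,\Lambda+\gamma\}$; once $\theta_k\ge\Lambda+\gamma$ the maximum is attained by $\theta_k$ at every later step, so $\theta_t=\theta_k$ for all $t\ge k$, which is the last assertion.
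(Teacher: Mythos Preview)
Your approach is the paper's: argue by contradiction, normalize the multiplier sequence, take the limit in the first line of~\eqref{eqn:sqp-penal-KKT-1}, and contradict MFCQ at the accumulation point. The only organizational difference is that the paper splits into two cases (the equality-multiplier norm $\|\lambda^{k+1}_e\|_\infty$ unbounded, or only the inequality part) and normalizes by the relevant piece, whereas you normalize by the full $\|\lambda^{k+1}\|$ in a single pass; yours is the more standard and economical route, and the final paragraph on $\theta_k$ matches the paper's one-line conclusion.

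On the step you flag as the ``crux'' (forcing $\bar\lambda_i=0$ for $i\in\mathcal I$ with $c_i(x^*)>0$): the paper does not argue more carefully than you do. It simply asserts that for such $i$ ``the Lagrange multiplier $\bar\lambda_i=0$ at $\bar x$ due to the complementarity conditions,'' without the comparison with $d=0$ or the appeal to $v(x_k)\to 0$ that you tentatively sketch; so your hedging is more candid than the paper's treatment, but the underlying plan coincides. One small additional point: your stabilization argument assumes $\theta_k$ eventually reaches $\Lambda+\gamma$, which is not guaranteed --- a bounded nondecreasing sequence can increase strictly toward a limit without attaining it. The paper is equally terse here; boundedness of $\{\theta_k\}$ follows immediately from the multiplier bound, but the finite-step stabilization, as stated in both your proposal and the paper, would merit one more line of justification.
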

\begin{proof}
     Suppose on the contrary, at least one of the Lagrange multipliers is unbounded. 
From Assumption~\ref{assp:Bk}, the sequences $\{x_k\}$ and $\{B_k\}$ are bounded. Further,$\{g_k\}$ is bounded due to Assumption~\ref{assp:upperC2}.
     Let $x_{k_u}\to\bar{x}$ as $k_u\to\infty$, where $\bar{x}$ is an accumulation point. Then, $\norm{\lambda^{k_{u+1}}}_{\infty}\to\infty$.  The corresponding subsequences $\{x_{k_u}\}$, $\{g_{k_u}\}$ remain bounded. 
     Rewrite the first equation in optimality condition in~\eqref{eqn:sqp-penal-KKT-1} as
    \begin{equation} \label{eqn:sqp-penal-KKT-full}
     \centering
     \begin{aligned}
	     g_k + B_k d_k = \sum_{i\in\mathcal{E}} \lambda^{k+1}_i \nabla c_i(x_k)+ \sum_{i\in\mathcal{I}\cap \mathcal{A}(\bar{x})} \lambda^{k+1}_i \nabla c_i(x_k) + \sum_{i\in\mathcal{I} \backslash \mathcal{A}(\bar{x})} \lambda^{k+1}_i \nabla c_i(x_k) ,\\
     \end{aligned}
    \end{equation}
	 The left-hand side of the equation stays bounded as $k_u\to\infty$. 

	Consider the case where there exists $i \in \mathcal{E}$ so that $\lambda_i^{k_{u+1}} \to\infty$.
        Let $t_k = \norm{\lambda^{k+1}_e}_{\infty}$, where $\lambda^{k+1}_e$ is the Lagrange multiplier vector for all $i\in\mathcal{E}$. Then, $t_{k_u}\to\infty$ as $k_u\to \infty$.
       Divide both sides of~\eqref{eqn:sqp-penal-KKT-full} by $t_{k_u}$, we have
     \begin{equation} \label{eqn:sqp-penal-KKT-full-2}
       \centering
       \begin{aligned}
	       \frac{1}{t_{k_u}} ( g_{k_u} + B_{k_u} d_{k_u} ) = 
	       \sum_{i\in\mathcal{E}}  a^{k_{u+1}}_i\nabla c_i(x_{k_u}) 
	       +\sum_{i\in\mathcal{I} \cap \mathcal{A}(\bar{x})}  b^{k_{u+1}}_i\nabla c_i(x_{k_u}) 
               +\sum_{i\in\mathcal{I} \backslash \mathcal{A}(\bar{x})} c^{k_{u+1}}_i \nabla c_i(x_k), 
       \end{aligned}
      \end{equation}
    where  
    \begin{equation} \label{eqn:sqp-penal-multi}
       \centering
       \begin{aligned}
	       a^{k_{u+1}}_i = \frac{\lambda^{k_u+1}_i}{ t_{k_u}}, \ i\in \mathcal{E}, \
              b^{k_{u+1}}_i =  \frac{\lambda^{k_u+1}_i}{ t_{k_u}},\ i\in \mathcal{I} \cap \mathcal{A}(\bar{x}), \ 
              c^{k_{u+1}}_i = \frac{\lambda^{k_u+1}_i}{ t_{k_u}},\ i\in \mathcal{I} \backslash \mathcal{A}(\bar{x}).
       \end{aligned}
      \end{equation}
Next, we notice that for $i\in\mathcal{I} \backslash \mathcal{A}(\bar{x})$,  $c(\bar{x})>0$. Hence, 
      the Lagrange multiplier $\bar{\lambda}_i = 0,i\in\mathcal{I} \backslash \mathcal{A}(\bar{x})$ at $\bar{x}$  due to the complementarity conditions.
      Thus, $c_i^{k_{u+1}}\to 0$. Taking the limit of~\eqref{eqn:sqp-penal-KKT-full-2}, we get 
     \begin{equation} \label{eqn:sqp-penal-KKT-full-3}
       \centering
       \begin{aligned}
        \lim_{k_u\to\infty} \sum_{i\in\mathcal{E}}  a^{k_{u+1}}_i\nabla c_i(x_{k_u}) 
	       +\sum_{i\in\mathcal{I} \cap \mathcal{A}(\bar{x})}  b^{k_{u+1}}_i\nabla c_i(x_{k_u}) =0.
        \end{aligned}
      \end{equation}
        Notice that there exists $j\in\mathcal{E}$ so that $a_j^{k_{u+1}} = \pm 1$ for all $k_u$.
Then, due to the linear independence of $\nabla c_i(\bar{x}), i\in\mathcal{E}$, the first term of~\eqref{eqn:sqp-penal-KKT-full-3} does not converge to zero. Consequently, there exists $j\in\mathcal{I}\cap \mathcal{A}(\bar{x})$ so that $b_j^{k_{u+1}} > M$ for all $k_u$ and $M>0$.

	From MFCQ at $\bar{x}$, there exists $w \in \Rbb^n$ such that~\eqref{def:mfcq} is satisfied at $\bar{x}$. Taking the dot product with $w$ of~\eqref{eqn:sqp-penal-KKT-full-3}, we have
     \begin{equation} \label{eqn:sqp-penal-KKT-full-5}
       \centering
       \begin{aligned}
      \lim_{k_u\to\infty}\sum_{i\in\mathcal{I} \cap \mathcal{A}(\bar{x})}  b^{k_{u+1}}_i\nabla c_i(x_{k_u})^T w =  -\lim_{k_u\to\infty} \sum_{i\in\mathcal{E}}  a^{k_{u+1}}_i\nabla c_i(x_{k_u})^T w =0.
       \end{aligned}
      \end{equation}
      However, we also know that for some $j\in\mathcal{I}\cap \mathcal{A}(\bar{x})$, $b_j^{k_{u+1}} > M$. Thus,
     \begin{equation} \label{eqn:sqp-penal-KKT-full-6}
       \centering
       \begin{aligned}
 \sum_{i\in\mathcal{I} \cap \mathcal{A}(\bar{x})}  b^{k_{u+1}}_j\nabla c_j(x_{k_u})^T w 
\geq  M  c_j(\bar{x})^T w > 0.
       \end{aligned}
      \end{equation}
       This is a contradiction. Therefore, $\{t_k\}$ is bounded. 
       There remains a case where only the Lagrange multipliers for the inequality constraints are unbounded. The proof is similar to the one above by setting $t_k=\norm{\lambda^{k+1}_i}$, where $\lambda^{k+1}_i$ is the vector for inequality Lagrange multipliers. The proof is hence omitted.
       We conclude that $\lambda^{k+1}$ is bounded.
	Since $\theta_k$ is determined by $\lambda^k$, there exists $k$ such that $\theta_t=\theta_k$ for all $t\geq k$. 
\end{proof}

\begin{lemma}\label{lem:merit-decrease}
       If the MFCQ conditions in Lemma~\ref{lem:bounded-lp} hold,
	then there exists $c_{\phi}>0$ so that the merit function sequence $\{\phi(x_k,\theta_k)\}$ satisfies  
\begin{equation*} \label{eqn:merit-decrease-1} 
 \centering
  \begin{aligned}
	  \phi&(x_k,\theta_k) - \phi(x_{k+1},\theta_k)  
			\geq \frac{1}{2} c_{\phi} \norm{d_k}^2. \\
  \end{aligned}
\end{equation*}
\end{lemma}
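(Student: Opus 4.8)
The plan is to upgrade the per-step estimate already obtained inside the proof of Lemma~\ref{lem:line-search-merit} into a bound whose constant does not depend on $k$, by showing that the backtracking step sizes $\alpha_k$ are uniformly bounded away from zero. Recall that in that proof we derived, for every $k$,
\begin{equation*}
  \phi(x_k,\theta_k) - \phi(x_{k+1},\theta_k) \;\geq\; \frac{1}{2}\alpha_k d_k^T B_k d_k + \frac{\alpha_k}{2}\bigl(b - \alpha_k(\rho + \theta_k c_h)\bigr)\norm{d_k}^2 ,
\end{equation*}
with $c_h = c_h^e + c_h^i$; and by Lemma~\ref{lem:bounded-lp} there are an index $K$ and a constant $\bar\theta>0$ (namely $\bar\theta=\theta_K$) with $\theta_k=\bar\theta$ for all $k\geq K$.

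First I would note that the chain of inequalities leading to the displayed bound in fact holds with any trial step $\alpha\in(0,1]$ in place of $\alpha_k$, since Lemmas~\ref{lem:line-search-eqcons} and~\ref{lem:line-search-ineqcons} and the estimate~\eqref{eqn:sqp-penal-merit-pf-3} only use $\alpha\leq1$. Hence, for $k\geq K$, the line search test~\eqref{eqn:line-search-cond} is passed by every $\alpha\in(0,1]$ with $\alpha\leq\bar\alpha := (2-\eta)b/(\rho+\bar\theta c_h)$: inserting the displayed bound with $\theta_k=\bar\theta$, and using $d_k^T B_k d_k \geq b\norm{d_k}^2$ from Assumption~\ref{assp:Bk} together with $1-\eta>0$,
\begin{equation*}
  \phi(x_k,\theta_k) - \phi(x_{k}+\alpha d_k,\theta_k) - \eta\alpha\tfrac{1}{2}d_k^T B_k d_k \;\geq\; \frac{\alpha}{2}\bigl((1-\eta)b + b - \alpha(\rho+\bar\theta c_h)\bigr)\norm{d_k}^2 \;=\; \frac{\alpha}{2}\bigl((2-\eta)b - \alpha(\rho+\bar\theta c_h)\bigr)\norm{d_k}^2 \;\geq\; 0 .
\end{equation*}
Since step 8 of Algorithm~\ref{alg:sqp} tries $\alpha=1,\tau_\alpha,\tau_\alpha^2,\dots$ and accepts the first value passing the test, for $k\geq K$ the accepted $\alpha_k$ satisfies $\alpha_k\geq\min\{1,\tau_\alpha\bar\alpha\}=:\hat\alpha>0$: if $\bar\alpha\geq1$ then $\alpha=1$ passes and $\alpha_k=1$; otherwise the first power $\tau_\alpha^{j}\leq\bar\alpha$ is accepted (if no earlier one is), and then $\tau_\alpha^{j-1}>\bar\alpha$ forces $\alpha_k=\tau_\alpha^{j}>\tau_\alpha\bar\alpha$.

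For the finitely many indices $0\leq k<K$, Lemma~\ref{lem:bounded-lp} gives boundedness of $\{\lambda^{k+1}\}$, so Lemma~\ref{lem:line-search-merit} guarantees that each of those line searches terminates with some $\alpha_k>0$; set $\alpha_* := \min\bigl(\{\alpha_k : 0\leq k<K\}\cup\{\hat\alpha\}\bigr)>0$. Then for all $k$, the line search condition~\eqref{eqn:line-search-cond} combined with Assumption~\ref{assp:Bk} yields
\begin{equation*}
  \phi(x_k,\theta_k) - \phi(x_{k+1},\theta_k) \;\geq\; \eta\alpha_k\tfrac{1}{2}d_k^T B_k d_k \;\geq\; \frac{1}{2}\,\eta\,\alpha_*\,b\,\norm{d_k}^2 ,
\end{equation*}
so the claim holds with $c_\phi := \eta\,\alpha_*\,b>0$.

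I expect the main obstacle to be this uniform lower bound on the step sizes: one must exhibit an explicit threshold $\bar\alpha$, depending only on $b,\rho,\bar\theta,c_h$ and $\eta$, below which the Armijo-type test provably passes, and this hinges on the penalty parameter having stabilized — without the eventual constancy of $\theta_k$ from Lemma~\ref{lem:bounded-lp} the threshold would deteriorate along the iterations and no $k$-independent constant could be obtained. Handling the pre-stabilization iterates $k<K$ is only bookkeeping, since there are finitely many of them and each of their step sizes is already known to be positive.
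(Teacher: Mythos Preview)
Your proof is correct and follows essentially the same route as the paper: both arguments deduce from the estimate inside Lemma~\ref{lem:line-search-merit} a uniform positive lower bound $\underline{\alpha}$ on the backtracking step sizes, and then read off $c_\phi=\eta\,\underline{\alpha}\,b$ from~\eqref{eqn:line-search-cond} and Assumption~\ref{assp:Bk}. The only cosmetic differences are that the paper uses the global upper bound on $\{\theta_k\}$ (rather than its eventual constancy) to get the uniform threshold for all $k$ at once via a ceiling-function expression, and it works with the slightly smaller sufficient threshold $b/(\rho+\theta_k c_h)$ instead of your $(2-\eta)b/(\rho+\bar\theta c_h)$; neither affects the substance.
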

\begin{proof}
	From Lemma~\ref{lem:line-search-merit}, applying Assumption~\ref{assp:Bk} to~\eqref{eqn:line-search-cond}, we have 
\begin{equation} \label{eqn:merit-decrease-2} 
 \centering
  \begin{aligned}
	  \phi&(x_k,\theta_k) - \phi(x_{k+1},\theta_k)  
			\geq& \eta \alpha_k \frac{1}{2} b \norm{d_k}^2. \\
  \end{aligned}
\end{equation}
	From the proof of Lemma~\ref{lem:line-search-merit}, once $\alpha_k< b/(\rho+\theta_k c_h)$, the line search exits.
	Using ceiling function $\lceil\cdot \rceil$, which returns the least integer greater than the input, we have  
   \begin{equation} \label{eqn:merit-decrease-3}
   \centering
    \begin{aligned}
	    \alpha_k \geq \tau_{\alpha} ^{\lceil \log_{\tau_{\alpha}} \frac{b }{\rho +\theta_k  c_h} \rceil}.
    \end{aligned}
   \end{equation}
	where $\tau_{\alpha}$ is a parameter from the algorithm. From Lemma~\ref{lem:bounded-lp}, $\{\theta_k\}$ is bounded above.
	Therefore, there exists $\underline{\alpha}>0$ such that $\alpha_k\geq \underline{\alpha}$ for all $k=0,1,2,\dots$.
	Let $c_{\phi} = \eta \underline{\alpha} b$ and the proof is complete.
\end{proof}
\begin{theorem}\label{thm:simp-KKT}
	If the MFCQ conditions in Lemma~\ref{lem:bounded-lp} stand, then every accumulation point $\bar{x}$ of $\{x_k\}$ generated by Algorithm~\ref{alg:sqp} 
	is a KKT point of the problem~\eqref{eqn:opt0}. 
	That is, there exists a subsequence $\{x_{k_s}\}$ of $\{x_k\}$, where $ x_{k_s} \to \bar{x}$, and $\bar{\lambda}\in\Rbb^m$, so that the KKT conditions~\eqref{eqn:opt0-KKT} are satisfied at $\bar{x}$.
\end{theorem}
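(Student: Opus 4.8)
The plan is to combine the penalty stabilization of Lemma~\ref{lem:bounded-lp} with the merit decrease of Lemma~\ref{lem:merit-decrease}, show that the relaxation becomes inactive for all large $k$, deduce $\norm{d_k}\to 0$, and then pass to the limit in the subproblem optimality conditions~\eqref{eqn:sqp-penal-KKT-1}.

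First I would show the relaxation slacks eventually vanish. By Lemma~\ref{lem:bounded-lp} there is an index $k_0$ with $\theta_t=\theta:=\theta_{k_0}$ for all $t\geq k_0$; the update rule at step~7 then forces $\norm{\lambda^{t+1}}_\infty\leq \theta-\gamma<\theta$ for all $t\geq k_0$. On the other hand, Lemma~\ref{lem:lambda-sqp-prop} shows $i\in V_k^e$ forces $|\lambda_i^{k+1}|=\theta_k$ and $i\in V_k^i$ forces $\lambda_i^{k+1}=\theta_k$. Hence $V_k=\emptyset$ for all $k\geq k_0$, i.e. $v^k=w^k=0$ and $t^k=0$, so $d_k$ actually solves the unrelaxed subproblem~\eqref{eqn:opt-sqp-sub}, and~\eqref{eqn:sqp-penal-KKT-1} reduces to $c_i(x_k)+\nabla c_i(x_k)^Td_k=0$ for $i\in\mathcal{E}$, together with $c_i(x_k)+\nabla c_i(x_k)^Td_k\geq 0$, $\lambda_i^{k+1}\geq 0$, and $\lambda_i^{k+1}[c_i(x_k)+\nabla c_i(x_k)^Td_k]=0$ for $i\in\mathcal{I}$, and $g_k+B_kd_k=\sum_{i\in\mathcal{E}\cup\mathcal{I}}\lambda_i^{k+1}\nabla c_i(x_k)$.

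Next I would argue $\norm{d_k}\to 0$. For $k\geq k_0$, Lemma~\ref{lem:merit-decrease} gives $\phi(x_k,\theta)-\phi(x_{k+1},\theta)\geq \tfrac12 c_\phi\norm{d_k}^2\geq 0$, so $\{\phi(x_k,\theta)\}_{k\geq k_0}$ is nonincreasing; since $\{x_k\}$ stays in a bounded set (Assumption~\ref{assp:Bk}) on which $f$ and $v$ are continuous, the sequence is bounded below, hence convergent, so $\phi(x_k,\theta)-\phi(x_{k+1},\theta)\to 0$ and therefore $\norm{d_k}\to 0$. Then I would fix an accumulation point $\bar{x}$ and a subsequence $x_{k_s}\to\bar{x}$ with $k_s\geq k_0$; refining it I may assume $g_{k_s}\to\bar{g}$ (the $g_k\in\bar{\partial}f(x_k)$ are bounded because $f$ is Lipschitz and $\{x_k\}$ is bounded) and $\lambda^{k_s+1}\to\bar{\lambda}$ (bounded by Lemma~\ref{lem:bounded-lp}). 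Outer semicontinuity of the Clarke subdifferential gives $\bar{g}\in\bar{\partial}f(\bar{x})$. Passing to the limit in the reduced conditions: $B_{k_s}d_{k_s}\to 0$ and continuity of $\nabla c_i$ turn the gradient equation into $\bar{g}-\sum_i\bar{\lambda}_i\nabla c_i(\bar{x})=0$; the linearized equalities/inequalities together with $d_{k_s}\to 0$ give $c_i(\bar{x})=0$ for $i\in\mathcal{E}$ and $c_i(\bar{x})\geq 0$ for $i\in\mathcal{I}$; and the complementarity and sign relations pass to $\bar{\lambda}_i c_i(\bar{x})=0$, $\bar{\lambda}_i\geq 0$ for $i\in\mathcal{I}$. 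These are precisely~\eqref{eqn:opt0-KKT}, so $\bar{x}$ is a KKT point.

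The hard part will be the first step: recognizing that stabilization of the penalty parameter, through Lemma~\ref{lem:lambda-sqp-prop}, forces $V_k=\emptyset$ for all large $k$, which is exactly what makes $\bar{x}$ feasible. Without it, neither the feasibility of $\bar{x}$ nor the very applicability of MFCQ (defined only at feasible points) is available, and $\norm{d_k}\to 0$ together with monotonicity of the merit function alone does not yield $v(\bar{x})=0$. The remaining limiting arguments are routine continuity/boundedness manipulations once the subproblem has been reduced to its unrelaxed form.
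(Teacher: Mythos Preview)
Your proposal is correct and follows essentially the same route as the paper: stabilize $\theta_k$ via Lemma~\ref{lem:bounded-lp}, use the merit decrease of Lemma~\ref{lem:merit-decrease} to get $\norm{d_k}\to 0$, show the relaxation slacks vanish for all large $k$, and then pass to the limit in~\eqref{eqn:sqp-penal-KKT-1} using outer semicontinuity of $\bar{\partial}f$. The only cosmetic difference is that you invoke Lemma~\ref{lem:lambda-sqp-prop} directly to conclude $V_k=\emptyset$, whereas the paper re-derives the same fact from the slack-variable optimality conditions~\eqref{eqn:sqp-penal-KKT-2} (showing $p_i^{k+1},q_i^{k+1},r_i^{k+1}\geq\gamma>0$ and using complementarity); the two arguments are equivalent.
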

\begin{proof}
	By Lemma~\ref{lem:bounded-lp}, there exists $k_0>0$ such that for $t\geq k_0$, the Lagrange multipliers are bounded above and $\theta_t=\theta_{k_0}=\bar{\theta}$. 
	Let $k\geq k_0$.
	Since $\{x_k\}$ and $\{g_{k}\}$ are bounded, there exists at least one accumulation point for $\{x_k\}$.
	Let $\bar{x}$ be an accumulation point of $\{x_k\}$ and $\{x_{k_s}\}$ be a subsequence of $\{x_k\}$ such that $x_{k_s}\to \bar{x}$.
	From Lemma~\ref{lem:merit-decrease}, we have that
	for $k$ large enough, $\{\phi(x_k,\theta_k)\}$ is a decreasing and bounded sequence 
	with a fixed parameter $\bar{\theta}$. 
	Thus, $\phi(x_k,\theta_k)$ converges. 
	
	From Lemma~\ref{lem:merit-decrease},
	we know that $\phi(x_k,\theta_k)-\phi(x_{k+1},\theta_k)$ is 
	bounded below in the order of $\norm{d_k}^2$. Therefore,  $\lim_{k\to\infty} \norm{d_k}\to 0$. In particular, $\lim_{s\to \infty}\norm{d_{k_s}}\to 0$.
	From  step 7 in Algorithm~\ref{alg:sqp} we have $\bar{\theta} \geq |\lambda_i^{k_s+1}| + \gamma$ for $i\in \mathcal{E}\cup\mathcal{I}$ and $k_s$ large enough. 
	Using the first equality in~\eqref{eqn:sqp-penal-KKT-2}, we have $p_i^{k_s+1}\geq \gamma>0$. Passing on to a subsequence if necessary, we assume $p^{k_s} \to \bar{p}>0$. Then, by the fourth equation in~\eqref{eqn:sqp-penal-KKT-2}, $v^{k_s}=0$. Similarly, we have $w^{k_s}= 0$ and $t^{k_s}= 0$ for $k$ large enough. From the third equation and fourth line in~\eqref{eqn:sqp-penal-KKT-1}, we have 
       \begin{align}
          c_i(\bar{x}) = 0, \ i \in \mathcal{E}, \ c_i(\bar{x}) \geq 0, \ i\in\mathcal{I}.
        \end{align}
          
	Passing on further to a subsequence if necessary, we let $g_{k_s}\to \bar{g}$, 
	$\lambda^{k_{s}+1} \to \bar{\lambda}$. 
        From the first equation in the optimality conditions~\eqref{eqn:sqp-penal-KKT-1}, we have 
        \begin{equation} \label{eqn:simp-bundle-KKT-limit-1}
          \centering
          \begin{aligned}
		  0 = \bar{g} - \sum_{i\in\mathcal{E}\cup\mathcal{I}} \bar{\lambda}_i \nabla c_i(\bar{x}). 
          \end{aligned}
        \end{equation}
	By the outer semicontinuity of Clarke subdifferential, with $g_{k_s}\in \bar{\partial} f(x_{k_s})$, we have $\bar{g} \in \bar{\partial} f(\bar{x})$.
        As a result, 
	  $0 \in \bar{\partial} f(\bar{x}) - \sum_{i\in\mathcal{E}\cup\mathcal{I}}\bar{\lambda}_i \nabla c_i (\bar{x}) $.
	Therefore, the necessary 
	optimality conditions~\eqref{eqn:opt0-KKT} at $\bar{x}$ are satisfied.
\end{proof}

\section{Upper-$\Ctwo$ and KL functions}\label{sec:KLfunctions}
To establish local convergence for nonsmooth optimization problems, additional assumptions such as subdifferential error bound~\cite{atenas2023bundlelocal}, KL functions~\cite{liu2019pdcae,yu2021KL,noll2014KL} are often made.  
For SQP, it is appealing to assume KL property on the merit function.
However, the nonsmooth (specifically, DC) nature of~\eqref{eqn:opt-sqp-merit} makes it difficult to obtain a (lower) general subgradient. 
Thus, we resort to a specially constructed potential function similar to~\cite{liu2019pdcae}.

As mentioned in Section~\ref{sec:prob}, for a bounded domain and Lipschitz upper-$\Ctwo$ $f(\cdot)$, there exists $\sigma>0$ such that $-f(x)+\frac{1}{2}\sigma\norm{x}^2$ is convex. 
From the bounded feasible region in Assumption~\ref{assp:Bk}, given a convex and compact set $D\subset \Rbb^n$ such that $\Omega \subseteq D$ and $\Omega_k\subseteq D$. We can define the following proper, lower semicontinuous, convex function 
\begin{equation}\label{def:F}
       F(x) := 
         \begin{cases}
           -f(x)+\frac{1}{2}\sigma\norm{x}^2, \ &x \in D,\\
           \infty, \ &\text{otherwise}.
         \end{cases}
\end{equation}
Thus, $\text{dom} \partial F = D$ and if $x\in D, g \in \partial^+ f(x)$, we know $-g +\sigma x \in \partial F(x)$.
A conjugate function $F^*$ of $F$ can be formulated by~\eqref{def:conjugate}. 
By Proposition~\ref{prop:conj} and the Lipschitz continuity of $F$ on $D$, we have $\text{dom} \partial F^*$ is closed and bounded~\cite[Theorem~9.13]{rockafellar1998}.

Define the linearized constraint function $\bar{c}_i(x,w) = c_i(w)+\nabla c_i(w)^T (x-w), i\in\mathcal{E}\cup\mathcal{I}$.
The approximated feasible region is
\begin{equation}\label{eqn:linear-feasible-region}
   \bar{C}(x,w) := \left\{ (x,w) \in \Rbb^n\times\Rbb^n \middle \vert
            \begin{array}{l}
               \bar{c}_i(x,w) = 0, \ i\in \mathcal{E}, \\
               \bar{c}_i(x,w) \geq 0, \ i\in \mathcal{I}. \\
            \end{array} 
           \right\}.
\end{equation}
To obtain  general subgradient of the indicator function $i_{\bar{C}}$, we shall make the additional assumption that functions $c_i(\cdot)$ are twice continuously differentiable when needed. 
The potential function is defined as 
\begin{equation}\label{eqn:opt1-KLfunction}
            L(x,y,w) = - y^T x + F^*(y) +\frac{1}{2} \sigma \norm{x}^2 +\frac{1}{2} l \norm{x-w}^2 + i_{\bar{C}}(x,w),
\end{equation}
where $l>0$ is a constant. Clearly, $L(\cdot,\cdot,\cdot)$ is lower semicontinuous.

To obtain an upper-$\Ctwo$ $f(\cdot)$ with a $L(\cdot,\cdot,\cdot)$ that is a KL function, we turn to the group of subanalytic functions.
For simplicity of presentation, we apply existing results in literature whenever possible.
From its definition, we know an upper-$\Ctwo$ $f(\cdot)$ on an open and subanalytic set $O \subset \Rbb^n$ can be expressed in the form of~\eqref{eqn:uppc2-def-1}.
If $p(\cdot,\cdot)$ and $T$ in~\eqref{eqn:uppc2-def-1} are additionally subanalytic objects, then $f$ is subanalytic on $O$~\cite[Remarks~3.11]{bierstone1988semianalytic}. With a bounded feasible region throughout optimization, there exists a convex and compact $D\subset O$ so that $f$ is subanalytic and upper-$\Ctwo$ with uniform $\rho$ in~\eqref{eqn:uppc2-def} on $D$ and $x_k\in D$ for all $k$.
Define $F(\cdot)$ of~\eqref{def:F} with such a $D$, then $F$ is subanalytic. 

Next, we note that if we let $\sigma > \rho$ and define $\mu = \sigma-\rho>0$, then $F$ is $\mu$ strongly convex~\cite[Exercise~12.59]{rockafellar1998}. The choice of $\sigma$ affects the optimization algorithm through conditions on $b$, and in turn $B_k$.
Given a $\mu$ strongly convex function, the following result holds.
\begin{proposition}\label{prop:convex-subanalytic}
  If $F$ is a lower semicontinuous, subanalytic and $\mu$ strongly convex function, then its conjugate function $F^*$ is continuously differentiable with Lipschitz continuous derivative, subanalytic and convex. 
\end{proposition}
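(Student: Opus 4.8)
The plan is to verify the three asserted properties of $F^*$ in turn, using the convex duality of Proposition~\ref{prop:conj} together with the fact established above that $\mathrm{dom}\,F=D$ is a compact subanalytic set on which $F$ is Lipschitz. Convexity and lower semicontinuity of $F^*$ need no work: by \eqref{def:conjugate}, $F^*$ is a pointwise supremum of the affine maps $y\mapsto\langle y,x\rangle-F(x)$, so it is convex and lsc by \cite[Theorem~11.1]{rockafellar1998}. Finiteness is also easy: since $F\equiv+\infty$ off $D$, one has $F^*(y)=\max_{x\in D}\{\langle y,x\rangle-F(x)\}$, and this maximum is attained for every $y$ because $D$ is compact and $x\mapsto\langle y,x\rangle-F(x)$ is continuous on $D$; hence $\mathrm{dom}\,F^*=\Rbb^n$.

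For the $C^{1,1}$ claim I would invoke the duality between strong convexity and smoothness. By Proposition~\ref{prop:conj}, $\partial F^*=(\partial F)^{-1}$. Strong convexity of $F$ with modulus $\mu$ makes $\partial F$ $\mu$-strongly monotone, i.e.\ $\langle u_1-u_2,x_1-x_2\rangle\ge\mu\norm{x_1-x_2}^2$ whenever $u_i\in\partial F(x_i)$ (apply monotonicity of $\partial(F-\frac{1}{2}\mu\norm{\cdot}^2)$); this forces $(\partial F)^{-1}$ to be single-valued and, via the Cauchy--Schwarz inequality, $\frac{1}{\mu}$-Lipschitz. Together with $\mathrm{dom}\,\partial F^*=\Rbb^n$ --- for any $y$, an argmax $x^*$ in the definition of $F^*(y)$ satisfies $0\in\partial F(x^*)-y$, so $x^*\in\partial F^*(y)$ --- this shows $\partial F^*$ is a single-valued, globally defined, $\frac{1}{\mu}$-Lipschitz map; since a finite convex function with everywhere-singleton subdifferential is differentiable with $\nabla F^*=\partial F^*$ and a convex function differentiable on an open set is automatically $C^1$, we get $F^*\in C^{1,1}$ with $\mathrm{Lip}(\nabla F^*)\le 1/\mu$ (this is essentially \cite[Exercise~12.59]{rockafellar1998}).

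The delicate part is subanalyticity of $F^*$. The plan is to write $-F^*(y)=\min_{x\in D}\{F(x)-\langle y,x\rangle\}$, exhibiting $-F^*$ as the minimum over the \emph{compact} set $D$ of the jointly subanalytic function $(x,y)\mapsto F(x)-\langle y,x\rangle$ ($F$ is subanalytic and $(x,y)\mapsto\langle y,x\rangle$ is analytic). Minimizing a continuous subanalytic family over a compact parameter set is a proper projection of a subanalytic set --- precisely the operation used earlier to conclude that an upper-$\Ctwo$ function with subanalytic data in \eqref{eqn:uppc2-def-1} is subanalytic --- and it preserves subanalyticity; concretely, $\{(x,y,t):x\in D,\ F(x)-\langle y,x\rangle\le t\}$ is subanalytic and relatively compact in the $x$-direction, so, locally in $(y,t)$, its projection onto the $(y,t)$ coordinates is subanalytic and equals $\mathrm{epi}(-F^*)$. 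I would cite \cite{bierstone1988semianalytic} (Remarks~3.11 and the Boolean-algebra/proper-projection stability of subanalytic sets) for this. Hence $-F^*$, and therefore $F^*$, is subanalytic.

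The main obstacle is exactly this last step, and the point to be careful about is that subanalytic sets are stable only under \emph{proper} projections, not arbitrary ones; conjugation being a supremum over $x$, the argument works only because $D=\mathrm{dom}\,F$ is bounded --- a property supplied here by the bounded-feasible-region Assumption~\ref{assp:Bk} --- and would fail for a subanalytic $F$ with unbounded domain. The strong convexity $\mu=\sigma-\rho>0$ (obtained by taking $\sigma>\rho$ in \eqref{def:F}) is what promotes mere differentiability to a globally Lipschitz gradient, whereas convexity and $C^1$ regularity are soft consequences of compactness of $D$ and standard convex duality.
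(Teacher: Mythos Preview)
Your argument is correct and considerably more detailed than what the paper does: the paper simply records Proposition~\ref{prop:convex-subanalytic} as a known fact and cites \cite{le2018KL,bolte2007lojasiewicz} without giving any proof. Your route---standard convex duality for the $C^{1,1}$ claim via \cite[Exercise~12.59]{rockafellar1998}, and a proper-projection argument \`a la \cite{bierstone1988semianalytic} for subanalyticity of $F^*$ using compactness of $D=\mathrm{dom}\,F$---is the natural self-contained justification and matches the spirit of the references the paper invokes.

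One small refinement: your closing remark that the subanalyticity step ``would fail for a subanalytic $F$ with unbounded domain'' is a bit too pessimistic in the presence of $\mu$-strong convexity. Strong convexity makes $x\mapsto F(x)-\langle y,x\rangle$ coercive uniformly for $y$ in compact sets, so the sublevel sets $\{x:F(x)-\langle y,x\rangle\le t\}$ are bounded locally in $(y,t)$ even when $\mathrm{dom}\,F$ is unbounded; the projection is therefore still proper. Compactness of $D$ is the simplest sufficient condition and is exactly what is available here from Assumption~\ref{assp:Bk}, so your argument is valid as written---but the proposition as stated (without a compact-domain hypothesis) is not out of reach by the same method.
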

Proposition~\ref{prop:convex-subanalytic} is established in~\cite{le2018KL,bolte2007lojasiewicz}. 
Therefore, provided with a subanalytic $f$ and an appropriate $\sigma$, $\rho$ and $b$, the $-y^T x+F^*(y)+\frac{1}{2}\sigma\norm{x}^2+\frac{1}{2}l\norm{x-w}^2$ part of $L(\cdot,\cdot,\cdot)$ is subanalytic and continuous on $\text{dom}\partial L$.  

If functions $c_i(\cdot),i\in\mathcal{E}\cup\mathcal{I}$ are analytic functions, 
then the feasible set $\Omega$ in~\eqref{eqn:feasible-set} together with its indicator function $i_{\Omega}(\cdot)$ are subanalytic sets and functions, respectively~\cite[Example~4.4]{bolte2007lojasiewicz}.  
Similarly, $i_{\bar{C}}(\cdot,\cdot)$ is subanalytic. Given that subanalytic sets are closed under locally finite union and intersection, 
$L$ is subanalytic on $\text{dom}\partial L$. Finally, a continuous subanalytic function with closed domain is a KL function with exponent $\alpha\in [0,1)$~\cite[Theorem~3.1]{bolte2007lojasiewicz}.
We formalize the result below. 
\begin{theorem}\label{thm:upperC2andKL}
    Given Assumption~\ref{assp:upperC2},~\ref{assp:boundedHc} and~\ref{assp:Bk}, let $D$ be a subanalytic, convex, compact set 
    so that $\Omega \subset D$ and $\Omega_k\subset D, k=1,2,\dots$ and define $F$ in~\eqref{def:F} with such a $D$.
    If the optimization problem~\eqref{eqn:opt0} satisfies on $D$ the following properties\\ 
   \indent (\expandafter{\romannumeral 1}) the upper-$\Ctwo$ objective can be expressed as~\eqref{eqn:uppc2-def-1} with subanalytic $p(\cdot,\cdot)$ and $T$,\\
   \indent (\expandafter{\romannumeral 2}) the constant $\sigma$ in~\eqref{eqn:opt1-KLfunction} and $\rho$ in~\eqref{eqn:uppc2-convex} satisfies $\sigma>\rho$,\\ 
   \indent (\expandafter{\romannumeral 3}) the constraint functions $c_i, i\in\mathcal{E}\cup\mathcal{I}$ are analytic,\\ 
    then the potential function $L$ defined as~\eqref{eqn:opt1-KLfunction} is a KL function with exponent $\alpha\in [0,1)$ on $ \text{dom} \partial L$.  
\end{theorem}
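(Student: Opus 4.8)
The plan is to show that $L$ lies in a class for which the KL inequality with exponent $\alpha\in[0,1)$ is already available, namely proper, lower semicontinuous, subanalytic functions that are continuous on a closed domain, and then invoke~\cite[Theorem~3.1]{bolte2007lojasiewicz} directly. Since the excerpt already records that $L$ is lsc (and it is proper because the algorithm's feasible sets are nonempty), the work reduces to two tasks: (a) proving that $L$ is subanalytic on $\mathrm{dom}\,\partial L$ by decomposing it into its summands, each of which will be shown subanalytic and, where finite-valued, continuous, and then using the stability of the subanalytic class under locally finite sums, unions and intersections; and (b) verifying that the relevant domain is closed.

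First I would treat the smooth block $-y^Tx + F^*(y) + \frac{1}{2}\sigma\|x\|^2 + \frac{1}{2}l\|x-w\|^2$. The quadratic and bilinear terms are polynomials, hence analytic and subanalytic. For $F^*$ I would argue in three steps: by hypothesis (i) and~\cite[Remarks~3.11]{bierstone1988semianalytic} the objective $f$ is subanalytic on $D$, so $F$ of~\eqref{def:F} is lsc and subanalytic; by hypothesis (ii), $\sigma>\rho$, together with~\eqref{eqn:uppc2-convex} and~\cite[Exercise~12.59]{rockafellar1998}, $F$ is $\mu$-strongly convex with $\mu=\sigma-\rho>0$; and then Proposition~\ref{prop:convex-subanalytic} yields that $F^*$ is $C^1$ with Lipschitz gradient, convex and subanalytic, in particular continuous on its domain. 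Summing, the smooth block is a subanalytic function that is continuous on its effective domain.

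Next I would handle the indicator $i_{\bar C}(x,w)$. Under hypothesis (iii) each $c_i$ is analytic, hence so is the linearized constraint $\bar c_i(x,w)=c_i(w)+\nabla c_i(w)^T(x-w)$, being a finite combination of analytic functions and polynomials; therefore the set $\bar C$ of~\eqref{eqn:linear-feasible-region} is semianalytic, a fortiori subanalytic, and by~\cite[Example~4.4]{bolte2007lojasiewicz} its indicator $i_{\bar C}$ is subanalytic. Adding $i_{\bar C}$ to the smooth block and using closure of the subanalytic class under locally finite intersection and union — legitimate here because $\mathcal E$ and $\mathcal I$ are finite — shows $L$ is subanalytic on $\mathrm{dom}\,\partial L$. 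For closedness: $\bar C$ is closed since the $c_i$ are continuous, and, as noted after Proposition~\ref{prop:conj}, $\mathrm{dom}\,\partial F^*$ is closed (and bounded) by the Lipschitz continuity of $F$ on $D$; hence the domain of $L$ relevant to $\partial L$ is closed. Then~\cite[Theorem~3.1]{bolte2007lojasiewicz}, applied to the continuous subanalytic function $L$ on this closed domain, gives the KL property with exponent $\alpha\in[0,1)$, which is the assertion.

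I expect the delicate point to be the behavior of $F^*$ and $\partial F^*$ near the boundary of $D$: because $F$ implicitly carries an indicator of $D$, subgradients of $F$ blow up on $\partial D$, so the claim that $\mathrm{dom}\,\partial F^*$ is bounded and closed, and that $F^*$ is genuinely $C^1$ (not merely $C^1$ on an interior), hinges on choosing $D$ large enough that $\Omega$ and all iterates stay well inside it — equivalently, on the precise reading of the statement established earlier in the section. A secondary care point is checking that the unions/intersections invoked when assembling $L$ and $\bar C$ are genuinely locally finite so that the stability results for subanalytic objects apply verbatim; this is immediate from the finiteness of $\mathcal E\cup\mathcal I$, so it should not cause real difficulty.
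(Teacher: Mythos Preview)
Your proposal is correct and follows essentially the same route as the paper: the paper assembles the argument in the paragraphs immediately preceding the theorem, showing $f$ subanalytic via~\cite[Remarks~3.11]{bierstone1988semianalytic}, $F$ strongly convex via $\sigma>\rho$ and~\cite[Exercise~12.59]{rockafellar1998}, $F^*$ subanalytic and $C^1$ via Proposition~\ref{prop:convex-subanalytic}, $i_{\bar C}$ subanalytic via analyticity of the $c_i$ and~\cite[Example~4.4]{bolte2007lojasiewicz}, and then concludes by closure of the subanalytic class under locally finite operations and~\cite[Theorem~3.1]{bolte2007lojasiewicz}. Your write-up adds a bit more care on the closedness of the relevant domain and the boundary behavior of $F^*$, but the logical skeleton and the cited ingredients are identical.
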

Theorem~\ref{thm:upperC2andKL} implies that assuming a KL potential function for an upper-$\Ctwo$ objective is reasonable in many cases, including for power-grid optimization problems where the functions are bounded and analytic.

\section{\normalsize Local convergence analysis}\label{sec:local}

In this section, we will study the local convergence properties of Algorithm~\ref{alg:sqp}.
Our goal is to maintain the global convergence established in Section~\ref{sec:alg-convg} without modifying the algorithm itself, and focus on the local convergence property when the iteration $k$ is large enough and $\{x_k\}$ is close to a KKT point, per convention of SQP local convergence analysis.
Therefore, Assumptions~\ref{assp:upperC2},~\ref{assp:boundedHc},~\ref{assp:Bk} and the MFCQ conditions in Lemma~\ref{lem:bounded-lp} are assumed valid throughout the analysis. Consequently, the Lemmas and Theorems in Section~\ref{sec:alg-convg} remain valid. 

It is important to clarify what a large enough $k$ means. For the algorithmic parameters, we have $\theta_k=\bar{\theta}$ by Lemma~\ref{lem:bounded-lp} for $k$ large enough.
Theorem~\ref{thm:simp-KKT} shows that the linearized constraint in the quadratic subproblem is consistent for $k$ large enough, \textit{i.e.}, 
$v^{k} = 0$, $w^{k} = 0$ and $t^k=0$ for $k$ large enough. 
We summarize the properties when $k$ is large enough in the following proposition.
\begin{proposition}\label{prop:large-k}
    There exists integer $k_0>0$ such that Algorithm~\ref{alg:sqp} applied to~\eqref{eqn:opt0} with~\eqref{eqn:opt-sqp-penal} 
    satisfies:  $\theta_k=\bar{\theta}$, $v^k=w^k=t^k=0$ for all $k>k_0$. 
\end{proposition}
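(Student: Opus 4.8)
The plan is to assemble Proposition~\ref{prop:large-k} entirely from results already proven in Section~\ref{sec:alg-convg}, treating it as a convenient bookkeeping consequence rather than a fresh argument. The two claims — stabilization of the penalty parameter $\theta_k$ and eventual consistency of the linearized constraints ($v^k=w^k=t^k=0$) — are each delivered by an earlier statement, so the work is to (i) extract the relevant index thresholds, (ii) take their maximum, and (iii) confirm that the conclusions, once attained, persist for all larger $k$.

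First I would invoke Lemma~\ref{lem:bounded-lp}: under the standing MFCQ assumption (valid throughout Section~\ref{sec:local} by hypothesis), the multiplier sequence $\{\lambda^{k+1}\}$ is bounded, and there exists an index $k_1$ with $\theta_t=\theta_{k_1}=:\bar\theta$ for all $t\ge k_1$. This follows because step~7 of Algorithm~\ref{alg:sqp} sets $\theta_{k+1}=\max\{\theta_k,\norm{\lambda^{k+1}}_\infty+\gamma\}$, a nondecreasing update bounded above, hence eventually constant. Second, I would appeal to the argument inside the proof of Theorem~\ref{thm:simp-KKT}: once $k\ge k_1$ one has $\bar\theta\ge|\lambda_i^{k+1}|+\gamma$, so the first equality in~\eqref{eqn:sqp-penal-KKT-2} forces $p_i^{k+1}=\theta_k+\lambda_i^{k+1}\ge\gamma>0$ and $q_i^{k+1}=\theta_k-\lambda_i^{k+1}\ge\gamma>0$ for $i\in\mathcal{E}$, and likewise $r_i^{k+1}=\theta_k-\lambda_i^{k+1}\ge\gamma>0$ for $i\in\mathcal{I}$; the complementarity relations $p_i^{k+1}v_i^k=q_i^{k+1}w_i^k=r_i^{k+1}t_i^k=0$ then give $v^k=w^k=t^k=0$. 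Crucially, this deduction is pointwise in $k$ and uses only $\theta_k=\bar\theta$ together with the multiplier bound $\norm{\lambda^{k+1}}_\infty\le\bar\theta-\gamma$ that $\theta$'s own update guarantees; so it holds for \emph{every} $k>k_1$, not merely along a subsequence. Setting $k_0:=k_1$ completes the proof.

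The one point requiring a word of care — and the closest thing to an obstacle — is the distinction between subsequential and full-sequence conclusions. Theorem~\ref{thm:simp-KKT} phrases the consistency statement along a convergent subsequence $\{x_{k_s}\}$, but the underlying mechanism (the sign of $p^{k+1},q^{k+1},r^{k+1}$ via~\eqref{eqn:sqp-penal-KKT-2}) does not reference the subsequence at all; it depends only on $\theta_k$ having stabilized. I would therefore reproduce that short implication directly here, in its ``for all $k>k_0$'' form, rather than citing the subsequential version, so that the proposition is logically airtight. No new estimates, constraint qualifications, or KL machinery enter; the proof is a two-line synthesis of Lemma~\ref{lem:bounded-lp} and the KKT-based slack elimination, and I would present it as such.

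\begin{proof}
By the MFCQ assumption in force throughout this section, Lemma~\ref{lem:bounded-lp} applies: the multipliers $\{\lambda^{k+1}\}$ are bounded and there exists $k_0>0$ with $\theta_t=\theta_{k_0}=:\bar\theta$ for all $t\ge k_0$. Fix any $k>k_0$. Since step~7 of Algorithm~\ref{alg:sqp} yields $\bar\theta=\theta_k\ge\norm{\lambda^{k}}_\infty+\gamma$, and $\theta$ is nondecreasing with $\theta_{k+1}=\bar\theta$, we have $|\lambda_i^{k+1}|\le\bar\theta-\gamma$ for every $i\in\mathcal{E}\cup\mathcal{I}$. The first two equalities on the first line of~\eqref{eqn:sqp-penal-KKT-2} then give, for $i\in\mathcal{E}$,
\begin{equation*}
  p_i^{k+1}=\bar\theta+\lambda_i^{k+1}\ge\gamma>0,\qquad
  q_i^{k+1}=\bar\theta-\lambda_i^{k+1}\ge\gamma>0,
\end{equation*}
and the second line gives $r_i^{k+1}=\bar\theta-\lambda_i^{k+1}\ge\gamma>0$ for $i\in\mathcal{I}$. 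Combining these with the complementarity relations $p_i^{k+1}v_i^k=0$, $q_i^{k+1}w_i^k=0$ ($i\in\mathcal{E}$) and $r_i^{k+1}t_i^k=0$ ($i\in\mathcal{I}$) from~\eqref{eqn:sqp-penal-KKT-2}, we conclude $v^k=w^k=t^k=0$. As $k>k_0$ was arbitrary, the claim follows.
\end{proof}
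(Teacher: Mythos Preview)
Your proposal is correct and follows essentially the same approach the paper sketches in the paragraph preceding the proposition: invoke Lemma~\ref{lem:bounded-lp} for the stabilization $\theta_k=\bar\theta$, then use the KKT relations~\eqref{eqn:sqp-penal-KKT-2} (as in the proof of Theorem~\ref{thm:simp-KKT}) to force $p^{k+1},q^{k+1},r^{k+1}\ge\gamma>0$ and hence $v^k=w^k=t^k=0$. Your observation that the slack-elimination step is pointwise in $k$ and therefore holds for the full sequence, not merely the subsequence used in Theorem~\ref{thm:simp-KKT}, is a worthwhile clarification that the paper's informal justification glosses over.
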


For most SQP methods with line search, a consistent local convergence rate requires a full step, \textit{i.e.}, $\alpha_k=1$ in $x_{k+1}=x_k+\alpha_k d_k$~\cite{boggs1995sqp}. A natural transition to a full step is not guaranteed with the classic SQP in Algorithm~\ref{alg:sqp}.
Therefore, we divide the local convergence analysis into two cases. The first case is when the constraints are affine, where for $k$ large enough, $\alpha_k=1$ is accepted. The second case is for the general smooth constraints, where additional assumptions are needed for a full step.

\subsection{Local convergence with affine constraints}\label{sec:affine-local}
In this subsection, we assumed that the constraints in~\eqref{eqn:opt0} are affine. Therefore, 
the linearized constraints in optimization subproblems~\eqref{eqn:opt-sqp-penal} are exact. This assumption eliminates the constraint terms in~\eqref{eqn:opt-sqp-merit} at subproblem solutions $x_k+d_k$. 
While affine constraints are assumed here, a variety of assumptions and algorithms can be adopted to achieve the 
goal of generating a feasible sequence $\{x_k\}$ for~\eqref{eqn:opt0}. 
One such assumption is that the feasible region $\Omega$ is convex combined with a projected quadratic programming algorithm.
Another option is to have quadratically approximated inequality constraints that enforces feasibility of each $x_k$~\cite{yu2021KL}.
The assumptions of this section are formalized below. 
\begin{assumption}\label{assp:affine}
   The constraint functions $c_i, i\in \mathcal{E}\cup\mathcal{I}$ are affine. 
   The potential function~\eqref{eqn:opt1-KLfunction} is a KL function.
\end{assumption}
Assumption~\ref{assp:affine} is considered valid in the following analysis.
First, we prove the following Lemma similar to that of Lemma~\ref{lem:line-search-eqcons} under affine constraints.
\begin{lemma}\label{lem:line-search-affine-eqcons}
	For $k$ large enough, the equality constraints satisfy the following inequality 
       \begin{equation} \label{eqn:line-search-affine-eq}
         \centering
         \begin{aligned}
		 \bar{\theta} \sum_{i\in\mathcal{E}}|c_i(x_k)| - \bar{\theta} \sum_{i\in\mathcal{E}}|c_i(x_{k+1})| - \alpha_k \sum_{i\in\mathcal{E}} \lambda^{k+1}_i\nabla c_i(x_k)^T d_k
                     \geq 0.
      \end{aligned}
     \end{equation}
\end{lemma}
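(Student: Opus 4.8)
The plan is to exploit two consequences of the affine assumption together with Proposition~\ref{prop:large-k}. First, when each $c_i$ is affine its gradient is constant and the linearization is exact, so $c_i(x_k) + \nabla c_i(x_k)^T d = c_i(x_k + d)$ for every $d$. Second, by Proposition~\ref{prop:large-k} there is $k_0$ such that for $k > k_0$ we have $\theta_k = \bar{\theta}$ and $v^k = w^k = t^k = 0$; the latter puts every equality index into $A_k^e$, so the second equation in~\eqref{eqn:sqp-penal-KKT-1} collapses to $c_i(x_k) + \nabla c_i(x_k)^T d_k = 0$, i.e. $c_i(x_k + d_k) = 0$ and, equivalently, $\nabla c_i(x_k)^T d_k = -c_i(x_k)$ for all $i \in \mathcal{E}$. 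Interpreting the phrase ``$k$ large enough'' in the statement as $k > k_0$ is what makes all of this available at once.

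Next I would substitute $x_{k+1} = x_k + \alpha_k d_k$ into $c_i$ and use affineness to get $c_i(x_{k+1}) = (1-\alpha_k) c_i(x_k) + \alpha_k\left[c_i(x_k) + \nabla c_i(x_k)^T d_k\right] = (1-\alpha_k) c_i(x_k)$. Since $\alpha_k \in (0,1]$, taking absolute values yields $|c_i(x_{k+1})| = (1-\alpha_k)|c_i(x_k)|$, hence $\sum_{i\in\mathcal{E}} |c_i(x_k)| - \sum_{i\in\mathcal{E}} |c_i(x_{k+1})| = \alpha_k \sum_{i\in\mathcal{E}} |c_i(x_k)|$. For the Lagrange-multiplier term I would use the identity $\nabla c_i(x_k)^T d_k = -c_i(x_k)$ to rewrite $-\alpha_k \sum_{i\in\mathcal{E}} \lambda_i^{k+1} \nabla c_i(x_k)^T d_k = \alpha_k \sum_{i\in\mathcal{E}} \lambda_i^{k+1} c_i(x_k)$.

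Combining the two pieces, the left-hand side of~\eqref{eqn:line-search-affine-eq} equals $\alpha_k \sum_{i\in\mathcal{E}} \bigl( \bar{\theta}|c_i(x_k)| + \lambda_i^{k+1} c_i(x_k) \bigr)$. Because every equality index lies in $A_k^e$ for $k > k_0$, the second part of Lemma~\ref{lem:lambda-sqp-prop} gives $\lambda_i^{k+1} \in [-\bar{\theta}, \bar{\theta}]$, so $\lambda_i^{k+1} c_i(x_k) \geq -\bar{\theta}|c_i(x_k)|$ and each summand is nonnegative; since $\alpha_k > 0$ the inequality follows. There is no real obstacle here: unlike Lemma~\ref{lem:line-search-eqcons}, exactness of the linearization removes the need for the curvature constant $c_h^e$, so the whole argument is an identity-level computation. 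The only point that needs care is the bookkeeping of ``large enough $k$'' — one must take $k$ past the threshold where $\theta_k$ has stabilized \emph{and} the linearized subproblem has become consistent — and the elementary observation that $\alpha_k \le 1$ is what keeps the sign right when pulling $(1-\alpha_k)$ out of the absolute value.
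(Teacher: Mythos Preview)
Your proposal is correct and follows essentially the same approach as the paper's proof: both use Proposition~\ref{prop:large-k} to reduce to $c_i(x_k)+\nabla c_i(x_k)^T d_k=0$, compute $|c_i(x_{k+1})|=(1-\alpha_k)|c_i(x_k)|$ via affineness, rewrite the multiplier term using $\nabla c_i(x_k)^T d_k=-c_i(x_k)$, and conclude from the bound $\lambda_i^{k+1}\in[-\bar{\theta},\bar{\theta}]$ of Lemma~\ref{lem:lambda-sqp-prop}. Your version is marginally cleaner in observing the equality $c_i(x_{k+1})=(1-\alpha_k)c_i(x_k)$ directly rather than passing through the triangle inequality, but the arguments are otherwise identical.
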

\begin{proof} 
     By Assumption~\ref{assp:affine}, for $x_{k+1}=x_k+\alpha_k d_k$,
        \begin{equation} \label{eqn:sqp-ls-aff-pf-1}
         \centering
         \begin{aligned}
		 c_i(x_{k+1})  = c_i(x_k) + \alpha_k \nabla c_i(x_k)^T d_{k}.\\
      \end{aligned}
     \end{equation}
	By triangle inequality, Proposition~\ref{prop:large-k} and third line in~\eqref{eqn:sqp-penal-KKT-1}, we have
        \begin{equation} \label{eqn:sqp-ls-aff-pf-2}
   \centering
    \begin{aligned}
	    |c_i(x_{k+1})|  
	    & = | c_i(x_k) + \alpha_k \nabla c_i(x_k)^T d_{k}|
	     = \left|(1-\alpha_k) c_i(x_k) + \alpha_k \left[c_i(x_k)+ \nabla c_i(x_k)^T d_{k}\right]\right|\\ 
	      & \leq (1-\alpha_k)| c_i(x_k)| + \alpha_k |c_i(x_k)+ \nabla c_i(x_k)^T d_{k}| 
               =  (1-\alpha_k)| c_i(x_k)|.
    \end{aligned}
   \end{equation}
   Summing~\eqref{eqn:sqp-ls-aff-pf-2} over $i\in\mathcal{E}$, we have 
      \begin{equation} \label{eqn:sqp-ls-aff-pf-3}
     \centering
      \begin{aligned}
	      \sum_{i\in\mathcal{E}} |c_i(x_k)|-\sum_{i\in\mathcal{E}} |c_i(x_{k+1})|\geq  
	    \alpha_k  \sum_{i \in \mathcal{E}}  |c_i(x_k)|. \\
      \end{aligned}
    \end{equation}
    From Proposition~\ref{prop:large-k}, Lemma~\ref{lem:lambda-sqp-prop} and~\eqref{eqn:sqp-penal-KKT-1}, we can write
      \begin{equation} \label{eqn:sqp-ls-aff-pf-4}
     \centering
      \begin{aligned}
	      \sum_{i\in \mathcal{E}} \lambda_i^{k+1}\nabla c_i(x_k)^T d_k =& 
	       -\sum_{i\in \mathcal{E}} \lambda^{k+1}_i  c_i(x_k) \leq \bar{\theta}\sum_{i\in \mathcal{E}}  |c_i(x_k)|.\\
      \end{aligned}
    \end{equation}
    Therefore, combining~\eqref{eqn:sqp-ls-aff-pf-3} and~\eqref{eqn:sqp-ls-aff-pf-4}, we have 
      \begin{equation} \label{eqn:sqp-ls-aff-pf-5}
   \centering
    \begin{aligned}
	    \bar{\theta} \sum_{i\in\mathcal{E}} |c_i(x_k)|-\bar{\theta}\sum_{i\in\mathcal{E}} |c_i(x_{k+1})| &- \alpha_k \sum_{i\in\mathcal{E}}\lambda^{k+1}_i\nabla c_i(x_k)^T d_k \geq\\ 
             &\bar{\theta} \alpha_k  \sum_{i \in \mathcal{E}}  |c_i(x_k)|- 
                  \bar{\theta} \alpha_k\sum_{i\in \mathcal{E}}  |c_i(x_k)|  = 0.
     \end{aligned}
    \end{equation}
\end{proof}
Similarly, the following Lemma stands for the inequality constraints.
\begin{lemma}\label{lem:line-search-affine-ineqcons}
	For $k$ large enough, the inequality constraints satisfy
       \begin{equation} \label{eqn:line-search-affine-ineq}
         \centering
         \begin{aligned}
		 \bar{\theta} \sum_{i\in\mathcal{I}} [c_i(x_k)]^{-} - \bar{\theta} \sum_{i\in\mathcal{I}} [c_i(x_{k+1})]^{-} 
		  - \alpha_k \sum_{i\in\mathcal{I}} \lambda^{k+1}_i\nabla c_i(x_k)^T d_k
                     \geq 0.
      \end{aligned}
     \end{equation}
\end{lemma}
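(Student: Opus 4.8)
The plan is to prove Lemma~\ref{lem:line-search-affine-ineqcons} term-by-term over $i\in\mathcal{I}$, mirroring the equality-constraint argument of Lemma~\ref{lem:line-search-affine-eqcons}, but replacing the triangle inequality with convexity of the map $[\,\cdot\,]^{-}=\max\{0,-\,\cdot\,\}$. First I would use affineness to write, for $x_{k+1}=x_k+\alpha_k d_k$, the exact identity $c_i(x_{k+1})=(1-\alpha_k)c_i(x_k)+\alpha_k\bigl(c_i(x_k)+\nabla c_i(x_k)^T d_k\bigr)$, with no Hessian remainder. Since $[\,\cdot\,]^{-}$ is convex and $\alpha_k\in(0,1]$, Jensen's inequality gives $[c_i(x_{k+1})]^{-}\le(1-\alpha_k)[c_i(x_k)]^{-}+\alpha_k[c_i(x_k)+\nabla c_i(x_k)^T d_k]^{-}$. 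For $k>k_0$, Proposition~\ref{prop:large-k} gives $t^k=0$, so the feasibility relation in~\eqref{eqn:sqp-penal-KKT-1} reads $c_i(x_k)+\nabla c_i(x_k)^T d_k\ge 0$; the last bracket therefore vanishes and $[c_i(x_{k+1})]^{-}\le(1-\alpha_k)[c_i(x_k)]^{-}$. Summing over $i\in\mathcal{I}$ and multiplying by $\bar\theta$ yields $\bar\theta\sum_{i\in\mathcal{I}}[c_i(x_k)]^{-}-\bar\theta\sum_{i\in\mathcal{I}}[c_i(x_{k+1})]^{-}\ge\bar\theta\alpha_k\sum_{i\in\mathcal{I}}[c_i(x_k)]^{-}$.

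Next I would bound the multiplier term. With $t^k=0$, the complementarity relation $\lambda_i^{k+1}[c_i(x_k)+\nabla c_i(x_k)^T d_k+t_i^k]=0$ collapses to $\lambda_i^{k+1}\nabla c_i(x_k)^T d_k=-\lambda_i^{k+1}c_i(x_k)$ for each $i\in\mathcal{I}$. Since $v^k=w^k=t^k=0$ for $k>k_0$, every inequality index lies in $A_k^i$, so Lemma~\ref{lem:lambda-sqp-prop} gives $0\le\lambda_i^{k+1}<\bar\theta$. A short sign split on $c_i(x_k)$ then shows $-\lambda_i^{k+1}c_i(x_k)\le\bar\theta[c_i(x_k)]^{-}$: the left side is $\le 0=\bar\theta[c_i(x_k)]^{-}$ when $c_i(x_k)\ge 0$, and equals $\lambda_i^{k+1}|c_i(x_k)|\le\bar\theta|c_i(x_k)|=\bar\theta[c_i(x_k)]^{-}$ when $c_i(x_k)<0$. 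Summing and multiplying by $\alpha_k$ gives $\alpha_k\sum_{i\in\mathcal{I}}\lambda_i^{k+1}\nabla c_i(x_k)^T d_k\le\bar\theta\alpha_k\sum_{i\in\mathcal{I}}[c_i(x_k)]^{-}$. Subtracting this from the previous display yields exactly the claimed nonnegativity in~\eqref{eqn:line-search-affine-ineq}.

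There is no genuine obstacle: the lemma is just the affine-constraint sharpening of Lemma~\ref{lem:line-search-ineqcons}, in which the $O(\alpha_k^2\norm{d_k}^2)$ error vanishes because the linearization is exact. The only points requiring care are bookkeeping ones — invoking Proposition~\ref{prop:large-k} so that $k>k_0$ guarantees $t^k=0$ (which simultaneously kills the remainder term and makes the complementarity identity usable), and observing that for such $k$ all of $\mathcal{I}$ sits in $A_k^i$, so the sharp bound $\lambda_i^{k+1}<\bar\theta$ from Lemma~\ref{lem:lambda-sqp-prop} applies to every inequality index uniformly.
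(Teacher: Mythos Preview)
Your proof is correct and follows essentially the same approach as the paper: use affineness to remove the Hessian remainder, invoke Proposition~\ref{prop:large-k} so that $t^k=0$ and $A_k^i=\mathcal{I}$, and combine Lemma~\ref{lem:lambda-sqp-prop} with the subproblem complementarity conditions. The only organizational difference is that the paper splits into the two cases $c_i(x_k)+\nabla c_i(x_k)^T d_k=0$ versus $>0$, whereas your use of Jensen's inequality for the convex map $[\cdot]^-$ together with the unified complementarity identity $\lambda_i^{k+1}\nabla c_i(x_k)^T d_k=-\lambda_i^{k+1}c_i(x_k)$ handles both cases at once---a modest streamlining, not a different argument.
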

\begin{proof}
      By Assumption~\ref{assp:affine}, for $x_{k+1}=x_k+\alpha_k d_k$ and $i\in\mathcal{I}$,
        \begin{equation} \label{eqn:sqp-ineq-aff-pf-1}
         \centering
         \begin{aligned}
		 \relax [c_i(x_{k+1})]^-  =  [c_i(x_k)+ \alpha_k \nabla c_i(x_k)^T d_{k}]^- .\\
      \end{aligned}
     \end{equation}
     Multiplying by $-1$ and adding $[c_i(x_k)]^-$ to~\eqref{eqn:sqp-ineq-aff-pf-1} gives us 
        \begin{equation} \label{eqn:sqp-ineq-aff-pf-2}
         \centering
         \begin{aligned}
	\relax [c_i(x_{k})]^- - [c_i(x_{k+1})]^- = [c_i(x_{k})]^--[c_i(x_k)+ \alpha_k \nabla c_i(x_k)^T d_{k}]^-.\\
      \end{aligned}
     \end{equation}
     We notice that for $k$ large enough, $V_k^i = \emptyset$ and $A_k^i=\mathcal{I}$ by definition~\eqref{eqn:opt-sqp-fea-A}.
     We consider two cases. In the first case, $c_i(x_k)+ \nabla c_i(x_k)^T d_{k} =0 $. Applying it to~\eqref{eqn:sqp-ineq-aff-pf-2}, we obtain  
      \begin{equation} \label{eqn:sqp-ineq-aff-pf-3}
         \centering
         \begin{aligned}
	\relax [c_i(x_{k})]^- - [c_i(x_{k+1})]^- 
		   \geq   \alpha_k [c_i(x_{k})]^- .\\
      \end{aligned}
     \end{equation}
     Therefore, by Lemma~\ref{lem:lambda-sqp-prop}, 
      \begin{equation} \label{eqn:sqp-ineq-aff-pf-4}
         \centering
         \begin{aligned}
		 \bar{\theta} [c_i(x_{k})]^- - \bar{\theta} [c_i(x_{k+1})]^- -  \alpha_k \lambda_i^{k+1} \nabla c_i(x_k)^T d_k
		 \geq \alpha_k \bar{\theta}[c_i(x_{k})]^- + \alpha_k \lambda_i^{k+1} c_i(x_k) \geq 0.\\
      \end{aligned}
     \end{equation}
      In the second case, 
	$c_i(x_k)+ \nabla c_i(x_k)^T d_{k} >0 $. It is simple to verify that whatever the signs of $c_i(x_k)$ and $\nabla c_i(x_k)^T d_{k}$ are, 
       \begin{equation} \label{eqn:sqp-ineq-aff-pf-4.5}
         \centering
         \begin{aligned}
      	   \relax [c_i(x_{k})]^- -[c_i(x_k)+ \alpha_k \nabla c_i(x_k)^T d_{k}]^- \geq 0.
           \end{aligned}
     \end{equation}
      By~\eqref{eqn:sqp-ineq-aff-pf-2},
      \begin{equation} \label{eqn:sqp-ineq-aff-pf-5}
         \centering
         \begin{aligned}
      	  \relax [c_i(x_{k})]^- -  [c_i(x_{k+1})]^-  \geq  0.
           \end{aligned}
     \end{equation}
      From the complementarity condition in~\eqref{eqn:sqp-penal-KKT-1}, $\lambda_i^{k+1}=0$. Thus, 
      \begin{equation} \label{eqn:sqp-ineq-aff-pf-6}
         \centering
         \begin{aligned}
           \bar{\theta} [c_i(x_{k})]^- - \bar{\theta} [c_i(x_{k+1})]^- -  \alpha_k \lambda_i^{k+1} \nabla c_i(x_k)^T d_k
		 = \bar{\theta} [c_i(x_{k})]^- - \bar{\theta} [c_i(x_{k+1})]^- \geq 0.
        \end{aligned}
     \end{equation}
     Combine the two cases and sum over $i\in\mathcal{I}$, and the lemma is proven. 
\end{proof}
Next, we show that the step size $\alpha_k=1$ at $x_k$ meets the line search criterion. 
\begin{lemma}\label{lem:affine-step-size}
   Let $b>\rho$. The line search condition~\eqref{eqn:line-search-cond} is satisfied with $\alpha_k=1$ for $k$ large enough.
\end{lemma}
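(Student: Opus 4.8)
The plan is to show that with $\alpha_k = 1$, the merit decrease inequality established for the general line search analysis actually has a strictly positive slack of the correct order once the constraint contributions are handled by the affine-specific lemmas. Concretely, I would start exactly as in the proof of Lemma~\ref{lem:line-search-merit}: from Assumption~\ref{assp:upperC2} and inequality~\eqref{eqn:uppc2-def} applied at $x_k$ with $g_k \in \bar\partial f(x_k)$, I get $f(x_k) - f(x_{k+1}) \geq -\alpha_k g_k^T d_k - \frac{\rho}{2}\alpha_k^2\norm{d_k}^2$, and then use the first KKT equation~\eqref{eqn:sqp-penal-KKT-1} dotted with $-d_k$ to replace $-g_k^T d_k$ by $d_k^T B_k d_k - \sum_{i} \lambda_i^{k+1}\nabla c_i(x_k)^T d_k$. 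This yields, with $\alpha_k = 1$,
\begin{equation*}
 f(x_k) - f(x_{k+1}) \geq d_k^T B_k d_k - \frac{\rho}{2}\norm{d_k}^2 - \sum_{i\in\mathcal{E}\cup\mathcal{I}} \lambda_i^{k+1}\nabla c_i(x_k)^T d_k.
\end{equation*}

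Next I would add $\bar\theta\, v(x_k) - \bar\theta\, v(x_{k+1})$ to both sides and invoke Lemma~\ref{lem:line-search-affine-eqcons} and Lemma~\ref{lem:line-search-affine-ineqcons} (valid for $k$ large enough, with $\theta_k = \bar\theta$ by Proposition~\ref{prop:large-k} / Lemma~\ref{lem:bounded-lp}): these say precisely that the constraint-violation-plus-multiplier terms are nonnegative when $\alpha_k = 1$, i.e. $\bar\theta v(x_k) - \bar\theta v(x_{k+1}) - \sum_{i}\lambda_i^{k+1}\nabla c_i(x_k)^T d_k \geq 0$. Hence
\begin{equation*}
 \phi(x_k,\bar\theta) - \phi(x_{k+1},\bar\theta) \geq d_k^T B_k d_k - \frac{\rho}{2}\norm{d_k}^2 \geq \Bigl(b - \frac{\rho}{2}\Bigr)\norm{d_k}^2 \geq \frac{1}{2}(b-\rho)\norm{d_k}^2 + \frac{1}{2}\, d_k^T B_k d_k \cdot \frac{b-\rho}{b},
\end{equation*}
using $d_k^T B_k d_k \geq b\norm{d_k}^2$ from Assumption~\ref{assp:Bk}. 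I would then massage this to the form $\phi(x_k,\bar\theta) - \phi(x_{k+1},\bar\theta) \geq \eta \cdot 1 \cdot \tfrac{1}{2} d_k^T B_k d_k$, which is exactly condition~\eqref{eqn:line-search-cond} with $\alpha_k = 1$: since $b > \rho$, we have $d_k^T B_k d_k - \tfrac{\rho}{2}\norm{d_k}^2 \geq d_k^T B_k d_k - \tfrac{\rho}{2b} d_k^T B_k d_k = (1 - \tfrac{\rho}{2b}) d_k^T B_k d_k \geq \tfrac{1}{2} d_k^T B_k d_k$, and $1 - \tfrac{\rho}{2b} > \tfrac{1}{2}$ whenever $b > \rho$; choosing $\eta < 2(1-\tfrac{\rho}{2b})$ (which is consistent with $\eta \in (0,1)$) finishes it. Because backtracking in step~8 starts at $\alpha_k = 1$ and the condition is already met there, the accepted step size is $\alpha_k = 1$.

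The main obstacle is bookkeeping rather than conceptual: one must be careful that "for $k$ large enough" is uniform across all the ingredients (Lemma~\ref{lem:bounded-lp} fixes $\theta_k = \bar\theta$; Proposition~\ref{prop:large-k} fixes $v^k = w^k = t^k = 0$, which is what makes the affine constraint lemmas applicable; and the affine lemmas themselves require large $k$), so all of these kick in past a common threshold $k_0$. A secondary subtlety is verifying that the required relation between $\eta$, $b$, and $\rho$ does not conflict with the standing hypotheses — here the hypothesis $b > \rho$ of the lemma is exactly what provides the needed margin, and $\eta \in (0,1)$ leaves enough room since $2(1 - \rho/(2b)) > 1$. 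I would also note explicitly that in the affine case the inequality~\eqref{eqn:uppc2-def} holds with the same $\rho$ globally on the compact set $D$, so no additional localization of $\rho$ is needed.
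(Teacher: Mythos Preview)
Your proposal is correct and follows essentially the same route as the paper: combine the upper-$\Ctwo$ descent inequality with the first KKT equation of the subproblem (exactly as in Lemma~\ref{lem:line-search-merit}), invoke the affine constraint Lemmas~\ref{lem:line-search-affine-eqcons}--\ref{lem:line-search-affine-ineqcons} to kill the constraint/multiplier terms, and then check that $d_k^T B_k d_k - \tfrac{\rho}{2}\norm{d_k}^2 \ge \tfrac12 d_k^T B_k d_k$ under $b>\rho$. The paper's only cosmetic difference is that it writes the last step as $\tfrac12\alpha_k d_k^T B_k d_k + \tfrac{\alpha_k}{2}(b-\rho\alpha_k)\norm{d_k}^2$ and observes this is $\ge \eta\cdot\tfrac12 d_k^T B_k d_k$ for any $\eta\le 1$; your bound $(1-\tfrac{\rho}{2b})d_k^T B_k d_k$ via $\norm{d_k}^2 \le \tfrac{1}{b}d_k^T B_k d_k$ reaches the same conclusion. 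One small cleanup: the first displayed chain ending in $\tfrac12(b-\rho)\norm{d_k}^2 + \tfrac12 d_k^T B_k d_k\cdot\tfrac{b-\rho}{b}$ is not a valid inequality (you only have a lower bound $d_k^T B_k d_k\ge b\norm{d_k}^2$, not an upper bound), but you immediately discard it and give the correct argument, so this does not affect the proof. Also note that $\eta\in(0,1)$ is a \emph{fixed} algorithm parameter, not something you get to choose here; since $2(1-\rho/(2b))>1$, every admissible $\eta$ already satisfies your constraint, so the phrasing ``choosing $\eta$'' should be replaced by ``for any $\eta\in(0,1)$''.
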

\begin{proof}
    We omit steps already explained in the proof of Lemma~\ref{lem:line-search-merit}. By Proposition~\ref{prop:large-k}, $\phi(x_k,\theta_k) = \phi(x_k,\bar{\theta})$. From~\eqref{eqn:sqp-penal-merit-pf-3}, 
      Lemma~\ref{lem:line-search-affine-eqcons} and Lemma~\ref{lem:line-search-affine-ineqcons},
    \begin{equation} \label{eqn:sqp-affine-step-size-pf-1} 
 \centering
  \begin{aligned}
	  \phi&(x_k,\bar{\theta}) - \phi(x_{k+1},\bar{\theta})  = f(x_k) -f(x_{k+1})+ \bar{\theta} v(x_k)-\bar{\theta} v(x_{k+1}) \\
			 \geq & \alpha_k d_k^TB_kd_k - \frac{\rho}{2} \alpha_k^2\norm{d_k}^2- \alpha_k \sum_{i\in \mathcal{E}\cup\mathcal{I}} \lambda_i^{k+1} \nabla c_i(x_k)^T d_k + \bar{\theta} v(x_k)-\bar{\theta} v(x_{k+1}) \\
			\geq& \frac{1}{2}\alpha_k d_k^TB_kd_k + \frac{\alpha_k}{2} (b- \rho \alpha_k)  \norm{d_k}^2.
  \end{aligned}
\end{equation}
    If $\alpha_k=1$ and $b>\rho$, then the line search condition~\eqref{eqn:line-search-cond} is satisfied for $\eta\leq 1$.
\end{proof}
\begin{remark}
 The constant $b$ is entirely controlled by the choice of $B_k$ while $\rho$ is a constant of the function $f$ on $D$. Therefore, the condition $b>\rho$ is reasonable and implementable. One practical algorithm to ensure this is given in the author's previous paper~\cite{wang2022}. Similarly, we ask for inequalities among $b$, $\rho$, $\sigma$, $l$ throughout the local convergence analysis, all of which are implementable. The one condition that is not easily achieved is given as an assumption in Assumption~\ref{assp:Bk-c}. 
\end{remark}
Lemma~\ref{lem:affine-step-size} establishes the sufficient decrease for merit function. 
Next, we establish the decreasing property for~\eqref{eqn:opt1-KLfunction}. 
\begin{lemma}\label{lem:affine-lag-decrease}
   Let $2b\geq \sigma+l$ and $b\geq \sigma\geq \rho$. For $k$ large enough, there exists constants $c_d>0$ such that 
   \begin{equation} \label{eqn:affine-lag-0}
     \begin{aligned}
         \centering
	L(x_{k}, -g_{k-1}+\sigma x_{k-1} , x_{k-1})-L(x_{k+1}, -g_{k}+\sigma x_{k},x_{k}) \geq  \frac{1}{2} c_d \norm{d_{k-1}}^2.
      \end{aligned}
     \end{equation}
\end{lemma}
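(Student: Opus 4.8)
The plan is to track the change in the potential function $L$ across one iteration by writing the difference as a telescoping combination of four groups of terms: (a) the conjugate/linear part $-y^Tx+F^*(y)$, (b) the quadratic term $\tfrac12\sigma\|x\|^2$, (c) the proximal term $\tfrac12 l\|x-w\|^2$, and (d) the indicator term $i_{\bar C}$. Write $y_k := -g_k+\sigma x_k$, so that $L(x_{k+1},y_k,x_k)$ is the quantity on the right of the telescoping pair and $L(x_k,y_{k-1},x_{k-1})$ is on the left. I would first note that, by Proposition~\ref{prop:large-k} and Assumption~\ref{assp:affine}, for $k$ large enough $x_k$ and $x_{k+1}=x_k+d_k$ are both feasible for the (exact, affine) linearized constraints at any base point, so $i_{\bar C}(x_{k+1},x_k)=i_{\bar C}(x_k,x_{k-1})=0$; the indicator term drops out entirely. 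This is the step that makes the affine case clean and is where Assumption~\ref{assp:affine} is essential.

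Next I would handle the conjugate part. Since $g_k\in\bar\partial f(x_k)=\hat\partial^+ f(x_k)$ (upper-$C^2$, hence upper-regular), we have $y_k=-g_k+\sigma x_k\in\partial F(x_k)$, and Proposition~\ref{prop:conj} gives the Fenchel equality $F(x_k)+F^*(y_k)=\langle y_k,x_k\rangle$, i.e.\ $-y_k^Tx_k+F^*(y_k)=-F(x_k)=f(x_k)-\tfrac12\sigma\|x_k\|^2$. Plugging this into $L$, the troublesome $F^*$ and one $\tfrac12\sigma\|\cdot\|^2$ collapse, but at the ``mismatched'' argument $L(x_{k+1},y_k,x_k)$ we only get the Fenchel inequality $-y_k^Tx_{k+1}+F^*(y_k)\ge -\langle\nabla F(x_{k+1}),x_{k+1}\rangle + \cdots$; more usefully I would use $\mu$-strong convexity of $F$ (valid because $\sigma\ge\rho$, cf.\ Theorem~\ref{thm:upperC2andKL}(ii) and the surrounding discussion) to get a quadratic lower bound $-y_k^Tx_{k+1}+F^*(y_k)\le -F(x_{k+1}) - \tfrac{\mu}{2}\|x_{k+1}-x_k\|^2$ after combining with $F(x_{k+1})\ge F(x_k)+\langle y_k, x_{k+1}-x_k\rangle + \tfrac\mu2\|d_k\|^2$. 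Assembling: $L(x_k,y_{k-1},x_{k-1})-L(x_{k+1},y_k,x_k)$ reduces, after the $F$-terms telescope, to $\big(f(x_k)-f(x_{k+1})\big)$ plus the change in the $\tfrac{l}{2}\|\cdot-\cdot\|^2$ terms plus a nonnegative $\tfrac{\mu}{2}\|d_k\|^2$ contribution (or, depending on bookkeeping, $\tfrac{\sigma}{2}$-type terms). At this point I would invoke Lemma~\ref{lem:affine-step-size} ($\alpha_k=1$) and the merit-decrease estimate~\eqref{eqn:sqp-affine-step-size-pf-1}, which with $\alpha_k=1$ gives $f(x_k)-f(x_{k+1})+\bar\theta v(x_k)-\bar\theta v(x_{k+1})\ge \tfrac12 d_k^TB_kd_k+\tfrac12(b-\rho)\|d_k\|^2$; since all iterates are feasible, $v(x_k)=v(x_{k+1})=0$, so this bounds $f(x_k)-f(x_{k+1})$ directly from below by a multiple of $\|d_k\|^2$ via $d_k^TB_kd_k\ge b\|d_k\|^2$.

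The remaining bookkeeping is to verify that the $\tfrac{l}{2}\|x_{k}-x_{k-1}\|^2$ versus $\tfrac{l}{2}\|x_{k+1}-x_k\|^2$ terms, i.e.\ $\tfrac{l}{2}\|d_{k-1}\|^2 - \tfrac{l}{2}\|d_k\|^2$, combine with the $\|d_k\|^2$ gain to leave a net lower bound of the form $\tfrac12 c_d\|d_{k-1}\|^2$. Concretely I expect the estimate to read $L(x_k,y_{k-1},x_{k-1})-L(x_{k+1},y_k,x_k)\ge \big(\tfrac{b}{2}+\tfrac{\rho\text{ or }\sigma}{2}\text{-terms}-\tfrac{l}{2}\big)\|d_k\|^2 + \tfrac{l}{2}\|d_{k-1}\|^2$, and the hypotheses $2b\ge\sigma+l$ and $b\ge\sigma\ge\rho$ are exactly what is needed to make the coefficient of $\|d_k\|^2$ nonnegative so that it can be discarded, leaving $c_d = l$ (or a similar explicit constant). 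The main obstacle is the careful handling of the mismatched-argument conjugate term $-y_k^Tx_{k+1}+F^*(y_k)$: one must turn the Fenchel inequality into a two-sided quadratic sandwich using strong convexity/Lipschitz-gradient of $F^*$ (Proposition~\ref{prop:convex-subanalytic}) and choose which side's quadratic remainder is absorbed where, so that the algebraic constraints on $b,\sigma,l,\rho$ close up. I would do this substitution first on paper to pin down the exact constants, then present the telescoping identity and the three inequalities (Fenchel, strong convexity, merit decrease) in that order, and finish with the arithmetic that discards the $\|d_k\|^2$ term under the stated inequalities.
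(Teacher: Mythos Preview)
Your high-level plan (drop the indicator via feasibility, collapse $F^*$ via Fenchel, then balance quadratic remainders) is right, but two concrete steps in your execution would fail.

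First, the strong-convexity inequality you write is reversed. From $y_k\in\partial F(x_k)$ and $\mu$-strong convexity one gets $F(x_{k+1})\ge F(x_k)+\langle y_k,d_k\rangle+\tfrac{\mu}{2}\|d_k\|^2$, hence
\[
-y_k^Tx_{k+1}+F^*(y_k)=-y_k^Td_k-F(x_k)\ \ge\ -F(x_{k+1})+\tfrac{\mu}{2}\|d_k\|^2,
\]
a \emph{lower} bound with $+\tfrac{\mu}{2}$, not the upper bound $\le -F(x_{k+1})-\tfrac{\mu}{2}\|d_k\|^2$ you state. Since you need an \emph{upper} bound on $L(x_{k+1},y_k,x_k)$ in the difference, strong convexity at this spot goes the wrong way.

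Second, and more importantly, the $F^*$-terms do not reduce to $f(x_k)-f(x_{k+1})$. Because $y_k\in\partial F(x_k)$ and $y_{k-1}\in\partial F(x_{k-1})$, the exact Fenchel identities give $F^*(y_k)=y_k^Tx_k-F(x_k)$ and $F^*(y_{k-1})=y_{k-1}^Tx_{k-1}-F(x_{k-1})$, so the difference simplifies (exactly, no inequality) to
\[
L(x_{k+1},y_k,x_k)-L(x_k,y_{k-1},x_{k-1})
= g_k^Td_k - g_{k-1}^Td_{k-1} + f(x_k)-f(x_{k-1}) + \tfrac{\sigma+l}{2}\bigl(\|d_k\|^2-\|d_{k-1}\|^2\bigr).
\]
There is no $f(x_{k+1})$ here, so the merit-decrease estimate for $f(x_k)-f(x_{k+1})$ is the wrong tool. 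The paper instead bounds the two pieces separately: the upper-$\mathcal C^2$ descent inequality~\eqref{eqn:uppc2-def} gives $f(x_k)-f(x_{k-1})-g_{k-1}^Td_{k-1}\le\tfrac{\rho}{2}\|d_{k-1}\|^2$, and the subproblem KKT condition (first line of~\eqref{eqn:sqp-penal-KKT-1}) together with feasibility/complementarity at $x_k$ gives $g_k^Td_k\le -d_k^TB_kd_k\le -b\|d_k\|^2$. Substituting these yields
\[
L(x_{k+1},y_k,x_k)-L(x_k,y_{k-1},x_{k-1})\le -\tfrac12(2b-\sigma-l)\|d_k\|^2-\tfrac12(l+\sigma-\rho)\|d_{k-1}\|^2,
\]
and the hypotheses $2b\ge\sigma+l$, $\sigma\ge\rho$ then let you discard the $\|d_k\|^2$ term and take $c_d=l+\sigma-\rho>0$. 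Replace your merit-decrease step by these two bounds and the argument closes.
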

\begin{proof}
    By the second equation in~\eqref{eqn:sqp-penal-KKT-1}, Proposition~\ref{prop:large-k}, Lemma~\ref{lem:affine-step-size}, we have 
  \begin{equation} \label{eqn:affine-lag-1}
   \centering
     \begin{aligned}
           c_i(x_k) = c_i(x_{k-1}+ d_{k-1}) = c_i(x_{k-1})+ \nabla c_i(x_{k-1})^T d_{k-1} =0, \ i\in\mathcal{E},\\
           c_i(x_k) = c_i(x_{k-1}+ d_{k-1}) = c_i(x_{k-1})+ \nabla c_i(x_{k-1})^T d_{k-1} \geq 0, \ i\in\mathcal{I}.\\
     \end{aligned}
   \end{equation}
   Similarly, $c_i(x_{k+1}) = 0, i\in\mathcal{E}$ and $c_i(x_{k+1}) \geq 0, i\in\mathcal{I}$.  
   To simplify notations, denote $ z_k = -g_k+\sigma x_k$.
 From the definition~\eqref{eqn:opt1-KLfunction},~\eqref{eqn:affine-lag-1} and Lemma~\ref{lem:affine-step-size}, we can write 
    \begin{equation} \label{eqn:affine-lag-2}
         \centering
         \begin{aligned}
L(x_{k+1}, z_k , x_k)-L(x_{k}, z_{k-1},x_{k-1})&
             = -z_k^T x_{k+1}+F^*(z_k) +\frac{1}{2}\sigma\norm{x_{k+1}}^2 +\frac{1}{2}l \norm{d_k}^2\\
             & +z_{k-1}^T x_{k}-F^*(z_{k-1}) -\frac{1}{2}\sigma\norm{x_{k}}^2 -\frac{1}{2}l\norm{d_{k-1}}^2.\\
      \end{aligned}
     \end{equation}
    Since $F(\cdot)$ is convex and $z_k \in \partial F(x_k)$, by~\eqref{def:conjugate-subgradient}, $F(x_k)+F^*(z_k)=z_k^T x_k$. Similarly,  $F(x_{k-1})+F^*(z_{k-1})= z_{k-1}^T x_{k-1}$. Applying these equalities to~\eqref{eqn:affine-lag-2}, we obtain
    \begin{equation} \label{eqn:affine-lag-3}
         \centering
         \begin{aligned}
L(x_{k+1}, &z_k , x_k)-L(x_{k}, z_{k-1},x_{k-1})
             = -z_k^T x_{k+1}+z_k^T x_k-F(x_k) +\frac{1}{2}\sigma\norm{x_{k+1}}^2 \\
             & \qquad +\frac{1}{2}l\norm{d_k}^2+z_{k-1}^T x_{k}+F(x_{k-1})-z_{k-1}^T x_{k-1}   -\frac{1}{2}\sigma\norm{x_{k}}^2 -\frac{1}{2}l\norm{d_{k-1}}^2\\
             =&  g_k^T d_k- \sigma x_k x_{k+1} +f(x_k) +\frac{1}{2}\sigma\norm{x_{k+1}}^2 +\frac{1}{2}l\norm{d_k}^2 - g_{k-1}^T d_{k-1}\\
             & +\sigma x_k^T x_{k-1}-\frac{1}{2}\sigma\norm{x_{k-1}}^2 -f(x_{k-1}) -\frac{1}{2}l\norm{d_{k-1}}^2\\
             =&  g_k^T d_k+ \frac{1}{2}(l+\sigma) \norm{d_k}^2  +f(x_k) 
              - g_{k-1}^T d_{k-1} -f(x_{k-1}) -\frac{1}{2}(l+\sigma)\norm{d_{k-1}}^2.\\
      \end{aligned}
     \end{equation}
 From here, we follow steps from the proof of Lemma~\ref{lem:line-search-merit}. 
    Applying Assumption~\ref{assp:upperC2}, the optimality conditions~\eqref{eqn:sqp-penal-KKT-1} and Proposition~\ref{prop:large-k}, we have 
    \begin{equation} \label{eqn:affine-lag-4}
         \centering
         \begin{aligned}
      L(&x_{k+1}, z_k , x_k)-L(x_{k}, z_{k-1},x_{k-1})\\
             &\leq \frac{\rho}{2} \norm{d_{k-1}}^2 +g_k^T d_k +\frac{1}{2}(l+\sigma)\norm{d_k}^2-\frac{1}{2}(l+\sigma)\norm{d_{k-1}}^2\\
      &= -d_k^T B_d d_k +\sum_{i\in\mathcal{E}\cup\mathcal{I}}\lambda^{k+1}\nabla c_i(x_k)^T d_k+\frac{1}{2}(l+\sigma)\norm{d_k}^2-\frac{1}{2}(l+\sigma-\rho) \norm{d_{k-1}}^2\\
      &\leq -b \norm{d_k}^2 -\sum_{i\in\mathcal{E}\cup\mathcal{I}}\lambda^{k+1} c_i(x_k)+\frac{1}{2}(l+\sigma)\norm{d_k}^2-\frac{1}{2}(l+\sigma-\rho)\norm{d_{k-1}}^2\\
         &  \leq   -\frac{1}{2}(2b-l-\sigma) \norm{d_k}^2-\frac{1}{2}(l+\sigma-\rho) \norm{d_{k-1}}^2.
      \end{aligned}
     \end{equation}
   The first inequality of~\eqref{eqn:affine-lag-4} comes from Assumption~\ref{assp:upperC2}. The equality comes from the first line of~\eqref{eqn:sqp-penal-KKT-1} multiplied by $d_k$. The second inequality is due to complementarity conditions in~\eqref{eqn:sqp-penal-KKT-1} and Proposition~\ref{prop:large-k}. The third inequality stands by~\eqref{eqn:sqp-penal-KKT-1} and~\eqref{eqn:affine-lag-1}. The Lemma is proven with $2b\geq l+\sigma$ and $b\geq \sigma\geq \rho$. 
\end{proof}

Some important properties of  $L(\cdot,\cdot,\cdot)$ along the sequences $\{x_k\}$ and $\{g_k\}$ generated by the algorithm is presented in the next lemma. 
\begin{lemma}\label{lem:aux-bound-1}
   Let $U$ be the set of accumulation points of $\{x_{k+1},-g_k+\sigma x_k,x_k\}$. Then, $U$ is nonempty and compact. 
   Given conditions in Lemma~\ref{lem:affine-lag-decrease}, the sequence $\{L(x_{k+1},-g_k+\sigma x_k,x_k)\}$ converges and maintains the same value on $U$.
\end{lemma}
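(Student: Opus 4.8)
The plan is to establish the three assertions of Lemma~\ref{lem:aux-bound-1} in sequence: nonemptiness and compactness of $U$, convergence of the sequence $\{L(x_{k+1},-g_k+\sigma x_k,x_k)\}$, and the constancy of $L$ on $U$. First I would observe that the triple sequence $\{(x_{k+1},-g_k+\sigma x_k,x_k)\}$ lives in a bounded set: $\{x_k\}\subset D$ is bounded by Assumption~\ref{assp:Bk}, and $\{g_k\}$ is bounded because $g_k\in\bar\partial f(x_k)$ and the Clarke subdifferential of the Lipschitz function $f$ is locally bounded (\cite[Theorem~9.13]{rockafellar1998}), so $\{-g_k+\sigma x_k\}$ is bounded too. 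Hence the sequence has at least one accumulation point, so $U\neq\emptyset$. For compactness, $U$ is by definition the set of cluster points of a bounded sequence, which is always closed and bounded, hence compact; I would spell this out using the standard characterization $U=\bigcap_{N}\overline{\{(x_{k+1},-g_k+\sigma x_k,x_k):k\ge N\}}$, an intersection of nested nonempty compact sets.

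Next, for convergence of $\{L(x_{k+1},-g_k+\sigma x_k,x_k)\}$: Lemma~\ref{lem:affine-lag-decrease} gives, for $k$ large enough, $L(x_k,-g_{k-1}+\sigma x_{k-1},x_{k-1})-L(x_{k+1},-g_k+\sigma x_k,x_k)\ge \tfrac12 c_d\|d_{k-1}\|^2\ge 0$, so the sequence is eventually nonincreasing. It suffices to show it is bounded below. I would argue that along the algorithm's iterates each term equals $g_k^Td_k+\tfrac12(l+\sigma)\|d_k\|^2+f(x_k)+(\text{telescoped boundary terms})$ — more directly, I would unwind $L$ at the relevant triple using $z_k=-g_k+\sigma x_k\in\partial F(x_k)$ and the Fenchel equality $F(x_k)+F^*(z_k)=z_k^Tx_k$ from Proposition~\ref{prop:conj}, together with $i_{\bar C}(x_{k+1},x_k)=0$ (valid for $k$ large by Proposition~\ref{prop:large-k} and the consistency of the linearized constraints), to get
\begin{equation*}
L(x_{k+1},z_k,x_k)=g_k^Td_k+f(x_k)+\tfrac12\sigma\|x_{k+1}\|^2-\tfrac12\sigma\|x_k\|^2+\tfrac12 l\|d_k\|^2+\sigma x_k^T d_k,
\end{equation*}
wait — cleaner is to note $F(x_k)$ and $F^*$ are proper and bounded below on the compact $D$, $f$ is continuous hence bounded on $D$, the quadratic terms are bounded on $D$, and $i_{\bar C}\ge 0$; so each $L$-term is bounded below by a constant independent of $k$. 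A monotone bounded-below real sequence converges; call the limit $L^\*$.

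Finally, for constancy of $L$ on $U$: let $(\bar x,\bar z,\bar w)\in U$ be the limit of a subsequence $(x_{k_s+1},-g_{k_s}+\sigma x_{k_s},x_{k_s})$. Since $L$ is lower semicontinuous, $L(\bar x,\bar z,\bar w)\le\liminf_s L(x_{k_s+1},-g_{k_s}+\sigma x_{k_s},x_{k_s})=L^\*$. For the reverse inequality I would use that summing the descent inequalities gives $\sum_k\|d_k\|^2<\infty$, hence $\|d_k\|\to0$, so $x_{k_s+1}-x_{k_s}\to0$ and also $x_{k_s+1}\to\bar w$; moreover passing to a further subsequence the individual pieces converge, and because $z_{k_s}\in\partial F(x_{k_s})$ with $F^*$ continuous (indeed continuously differentiable under the standing hypotheses, but continuity on $\mathrm{dom}\,\partial F^*$ suffices) and $x_{k_s}=\partial F^*(z_{k_s})$, one has $F^*(z_{k_s})\to F^*(\bar z)$ and the Fenchel equality passes to the limit: $F(\bar w)+F^*(\bar z)=\bar z^T\bar w$, with $\bar x=\bar w$. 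Substituting the limits into the explicit formula for $L$ along the sequence shows $L(x_{k_s+1},-g_{k_s}+\sigma x_{k_s},x_{k_s})\to L(\bar x,\bar z,\bar w)$, so $L(\bar x,\bar z,\bar w)=L^\*$. Since $(\bar x,\bar z,\bar w)\in U$ was arbitrary, $L\equiv L^\*$ on $U$.

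The main obstacle I anticipate is the continuity argument for $L$ along the sequence at the limit point — specifically handling the $F^*(z)$ term and the indicator term $i_{\bar C}$ at the limit. For $F^*$ one must invoke its continuity (or $C^1$ smoothness under the subanalytic hypotheses via Proposition~\ref{prop:convex-subanalytic}) together with the fact that $\{z_k\}$ stays in the compact set $\mathrm{dom}\,\partial F^*$; for the indicator term, one relies on Proposition~\ref{prop:large-k} so that $(x_{k+1},x_k)\in\bar C$ for all large $k$, and on closedness of $\bar C$ so the limit triple is still feasible, giving $i_{\bar C}=0$ at the limit rather than merely $\le$ via lower semicontinuity. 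Getting the reverse inequality $L(\bar x,\bar z,\bar w)\ge L^\*$ — rather than just $\le$ from lsc — is where the real work lies, and it is exactly these two continuity facts that supply it.
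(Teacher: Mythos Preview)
Your proposal is correct and tracks the paper's argument closely. The one place where the paper does something slightly different is in obtaining the inequality $L(\bar x,\bar z,\hat x)\ge \bar L$: rather than appealing to continuity of $F^*$, the paper substitutes the Fenchel identity $F^*(z_{k_s})=z_{k_s}^Tx_{k_s}-F(x_{k_s})$ directly into the expression for $L(x_{k_s+1},z_{k_s},x_{k_s})$, simplifies using $\|d_{k_s}\|\to 0$ and continuity of $f$ to get $\bar L=f(\bar x)$, and then invokes the Fenchel--Young inequality (equation~\eqref{eqn:aux-bound-1}) at the limit point to conclude $\bar L=f(\bar x)\le L(\bar x,\bar z,\hat x)$, with lower semicontinuity giving the other direction. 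This same Fenchel--Young inequality also supplies the lower bound on $L$ along the iterates in one line. Your anticipated obstacle about $F^*$ is in fact not an obstacle: since $F$ has compact domain $D$ and is continuous there, $F^*$ is finite on all of $\Rbb^n$ and hence (being convex) continuous, so your direct continuity route works as well without needing Proposition~\ref{prop:convex-subanalytic} or strong convexity.
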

\begin{proof}
     Since $\{x_k\}$ and $\{g_k\}$ are bounded, there exists at least one accumulation point of the sequence $\{x_{k+1},-g_k+\sigma x_k,x_k\}$. The first part of the lemma is proven. 
    
     From~\eqref{def:conjugate-subgradient}, we know that 
    \begin{equation}\label{eqn:aux-bound-1}
     \centering
     \begin{aligned}
       L(x,y,w) \geq -F(x)+\frac{1}{2}\sigma \norm{x}^2+\frac{1}{2}l\norm{x-w}^2+ i_{\bar{C}}(x,w) = f(x)+\frac{1}{2}l\norm{x-w}^2+ i_{\bar{C}}(x,w).
      \end{aligned}
    \end{equation}
    Denote $z_k=-g_k+\sigma x_k$. Since $i_{\bar{C}}(x_{k+1},x_k)=0$, the sequence  $\{L(x_{k+1},z_k,x_k)\}$ is bounded below. By Lemma~\ref{lem:affine-lag-decrease}, $\{L(x_{k+1},z_k,x_k)\}$ is non-increasing, and hence convergent. 
    Define $\bar{L} = \lim_{k\to\infty} L(x_{k+1},z_k,x_k)$.
    For any $(\bar{x},\bar{z},\hat{x}) \in U$ where $\bar{z}=-\hat{g}+\sigma \hat{x}$, there exists a subsequence $\lim_{k_s \to\infty} (x_{k_s+1},z_{k_s},x_{k_s})\to (\bar{x},\bar{z},\hat{x})$. By applying $\norm{d_k}\to 0$ repeatedly, we can write 
    \begin{equation}\label{eqn:aux-bound-2}
     \centering
     \begin{aligned}
      \bar{L}=&\lim_{k_s \to\infty} L(x_{k_s+1}, z_{k_s},x_{k_s}) = \lim_{k_s \to\infty}-z_{k_s}^T x_{k_s+1}+F^*(z_{k_s})  +\frac{1}{2}\sigma\norm{x_{k_s+1}}^2\\
          &\qquad\qquad\qquad\qquad\qquad\qquad\qquad\quad +\frac{1}{2}l\norm{d_{k_s}}^2+i_{\bar{C}}(x_{k_s+1},x_{k_s})\\
           =& \lim_{k_s \to\infty} -z_{k_s}^T x_{k_s+1} +z_{k_s}^T x_{k_s}- F(x_{k_s}) +\frac{1}{2}\sigma\norm{x_{k_s+1}}^2\\
           =& \lim_{k_s \to\infty} -z_{k_s}^T d_{k_s} + f(x_{k_s})-\frac{1}{2}\sigma\norm{x_{k_s}}^2 +\frac{1}{2}\sigma\norm{x_{k_s+1}}^2\\
           =& \lim_{k_s \to\infty}  f(x_{k_s}) -\frac{1}{2}\sigma (x_{k_s}+x_{k_s+1})^T(x_{k_s}-x_{k_s+1}) = \lim_{k_s \to\infty}  f(x_{k_s}) \\
           =&f(\bar{x}) \leq L(\bar{x},\bar{z},\hat{x}).
      \end{aligned}
    \end{equation}
     Since $L(\cdot,\cdot,\cdot)$ is lower semicontinuous, we have  
    \begin{equation}\label{eqn:aux-bound-3}
     \centering
     \begin{aligned}
      \bar{L} = \liminf_{k_s\to\infty} L(x_{k_s+1},-g_{k_s}+\sigma x_{k_s},x_{k_s}) \geq L(\bar{x},\bar{z},\hat{x}). 
      \end{aligned}
    \end{equation} 
    Therefore, $L(\bar{x},\bar{z},\hat{x}) = \bar{L}$ 
\end{proof}
The subgradient property of~\eqref{eqn:opt1-KLfunction} is examined next.
\begin{lemma}\label{lem:lag-subgradient-1}
   There exists constant $c_{L}>0$  such that the subgradient of the potential function~\eqref{eqn:opt1-KLfunction} satisfies 
    \begin{equation}\label{eqn:aux-sub-1}
     \centering
     \begin{aligned}
        \mathrm{dist}(0, \partial L(x_{k+1},-g_k+\sigma x_k,x_{k})) \leq c_L \norm{d_k},  
      \end{aligned}
    \end{equation} 
   for all $k$ large enough. 
\end{lemma}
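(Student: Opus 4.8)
The plan is to compute a specific element of $\partial L(x_{k+1}, -g_k+\sigma x_k, x_k)$ explicitly and bound its norm by $c_L\norm{d_k}$, using the optimality conditions \eqref{eqn:sqp-penal-KKT-1} of the subproblem and Proposition~\ref{prop:large-k}. Write $z_k = -g_k+\sigma x_k$ and separate the three blocks of variables $(x,y,w)$ in $L(x,y,w) = -y^Tx + F^*(y) + \tfrac12\sigma\norm{x}^2 + \tfrac12 l\norm{x-w}^2 + i_{\bar C}(x,w)$. The $y$-component of the subdifferential at $(x_{k+1},z_k,x_k)$ is $-x_{k+1} + \nabla F^*(z_k)$; since $z_k \in \partial F(x_k)$, Proposition~\ref{prop:conj} gives $x_k \in \partial F^*(z_k)$, and because $F$ is $\mu$-strongly convex on $D$, $F^*$ is differentiable there, so $\nabla F^*(z_k) = x_k$. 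Hence the $y$-component equals $-x_{k+1}+x_k = -d_k$, which already has norm $\norm{d_k}$.

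Next I would handle the $(x,w)$-block. The $x$-component of $\partial L$ is $-z_k + \sigma x_{k+1} + l(x_{k+1}-x_k) + \xi_x$, and the $w$-component is $-l(x_{k+1}-x_k) + \xi_w$, where $(\xi_x,\xi_w)\in\partial i_{\bar C}(x_{k+1},x_k)$ is a normal-cone vector. Here is where I need the structure of $\bar C$: by Proposition~\ref{prop:large-k}, for $k$ large the subproblem solution satisfies the linearized constraints as equalities/inequalities exactly at $x_k+d_k=x_{k+1}$ with multipliers $\lambda^{k+1}$, and the gradients $\nabla\bar c_i(x_{k+1},x_k) = \nabla c_i(x_k)$ in the $x$-slot (and the second-order term $\nabla^2 c_i(x_k)^T(x_{k+1}-x_k)$ in the $w$-slot — this is why the paper asks for $c_i$ twice continuously differentiable). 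So choosing the normal-cone multipliers to be exactly $\lambda^{k+1}$, the $x$-component becomes $-z_k + \sigma x_{k+1} + l d_k - \sum_i \lambda^{k+1}_i \nabla c_i(x_k)$. Substituting $z_k = -g_k+\sigma x_k$ and using the first KKT equation $g_k + B_k d_k = \sum_i \lambda^{k+1}_i\nabla c_i(x_k)$, this collapses to $\sigma d_k + l d_k - B_k d_k$, whose norm is at most $(\sigma + l + \norm{B_k})\norm{d_k}$, and $\norm{B_k}$ is bounded by Assumption~\ref{assp:Bk}. The $w$-component is $-ld_k$ plus the $w$-slot normal vector $\sum_i\lambda^{k+1}_i\nabla^2 c_i(x_k)^T d_k$; since $\{x_k\}$ is bounded, the second derivatives $\nabla^2 c_i$ are bounded on the relevant compact set, and $\{\lambda^{k+1}\}$ is bounded by Lemma~\ref{lem:bounded-lp}, so this block is also $O(\norm{d_k})$.

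Collecting the three blocks, the constructed subgradient has norm at most $c_L\norm{d_k}$ for a constant $c_L$ depending on $\sigma$, $l$, the bound on $\norm{B_k}$, the bound on $\{\lambda^{k+1}\}$, and the bound on $\norm{\nabla^2 c_i}$ over the compact feasible region; taking $\mathrm{dist}(0,\partial L(\cdot))\le$ (norm of this particular element) finishes the proof. The main obstacle I anticipate is getting the subdifferential of the composite $L$ right — in particular justifying that $\partial i_{\bar C}$ at $(x_{k+1},x_k)$ is generated by the constraint gradients with the \emph{same} multipliers $\lambda^{k+1}$ produced by the QP. This needs a constraint-qualification/normal-cone calculus argument for $\bar C$ (the affineness of $c_i$ in this subsection, together with MFCQ carried over from Lemma~\ref{lem:bounded-lp}, should suffice), plus care that the extra $w$-slot term from differentiating $\nabla c_i(w)^T(x-w)$ in $w$ is correctly accounted for — which is exactly the place where twice-differentiability of $c_i$ enters (and is why the affine case of this subsection simplifies it, since $\nabla^2 c_i \equiv 0$ there).
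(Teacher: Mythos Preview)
Your proposal is correct and follows essentially the same route as the paper: exhibit a specific element of $\partial L(x_{k+1},-g_k+\sigma x_k,x_k)$ by combining the smooth gradients with a normal-cone vector to $\bar C$ built from the QP multipliers $\lambda^{k+1}$, then use the first KKT equation of the subproblem to reduce each block to an $O(\norm{d_k})$ term. Two small remarks: (i) the normal-cone multipliers must be $-\lambda^{k+1}_i$ (not $\lambda^{k+1}_i$) so that the sign conditions $-\lambda^{k+1}_i\in N_{\Rbb_+}(\bar c_i(x_{k+1},x_k))$ hold for $i\in\mathcal{I}$---your displayed $x$-component already carries the correct minus sign, so this is only a wording slip; (ii) the paper obtains the $y$-block via the inclusion $x_k\in\partial F^*(z_k)$ from Proposition~\ref{prop:conj} rather than invoking differentiability of $F^*$, which avoids needing $\sigma>\rho$ at this particular step, but your argument is equally valid under the standing hypotheses of Section~\ref{sec:KLfunctions}.
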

\begin{proof}
    For $(x,y,w)\in \text{dom} \partial L$, applying~\cite[Exercise~8.8]{rockafellar1998},~\cite[Proposition~10.5]{rockafellar1998} 
    and~\cite[Corollary~10.9]{rockafellar1998}, we can write
    \begin{equation}\label{eqn:aux-affine-sub-pf-1}
     \centering
     \begin{aligned}
         \begin{pmatrix} 
          -y  + \sigma x+l(x-w)  \\
          -x + \hat{\partial} F^*(y)\\
          -l(x-w)
         \end{pmatrix} + 
           \hat{\partial} i_{\bar{C}}(x,w) 
          \subseteq   \hat{\partial} L(x,y,w) \subseteq \partial L(x,y,w). 
      \end{aligned}
    \end{equation}
    Since $F^*$ is convex, by~\cite[Example~6.10, Theorem~6.14, Exercise~8.14]{rockafellar1998}, 
    we have   
    \begin{equation}\label{eqn:aux-affine-sub-pf-2}
     \centering
     \begin{aligned}
         \hat{\partial} F^*(y) = \partial F^*(y), \
         \sum_{i\in\mathcal{E}\cup \mathcal{I}} \lambda_i \nabla \bar{c}_i(x,w) \in \hat{\partial} i_{\bar{C}}(x,w),  
       \end{aligned}
    \end{equation}
    where $\lambda_i\in N_{\Rbb}(\bar{c}_i(x,w))$ for $i\in\mathcal{E}$ and $\lambda_i\in N_{\Rbb_+}(\bar{c}_i(x,w))$ for $i\in\mathcal{I}$.  
    Substituting~\eqref{eqn:aux-affine-sub-pf-2} in~\eqref{eqn:aux-affine-sub-pf-1} leads to 
    \begin{equation}\label{eqn:aux-affine-sub-pf-3}
     \centering
     \begin{aligned}
         \begin{pmatrix} 
          -y  + \sigma x+l(x-w)  \\
          -x + \partial F^*(y)\\
          -l(x-w)
         \end{pmatrix} + 
          \sum_{i\in\mathcal{E}\cup\mathcal{I}} \lambda_i 
         \begin{pmatrix}
           \nabla c_i(w)\\
           0\\
           -\nabla^2 c_i(w) (x-w)
         \end{pmatrix} 
          \subseteq   \partial L(x,y,w). 
      \end{aligned}
    \end{equation}
   We note that affine functions $c_i(\cdot)$ are by definition twice continuously differentiable
   and  $(x_{k+1},-g_k+\sigma x_k,x_{k}) \in \text{dom}\partial L$.
   At $x=x_{k+1},y = -g_k+\sigma x_k, w=x_k$, the first equation in~\eqref{eqn:sqp-penal-KKT-1} gives us  
    \begin{equation}\label{eqn:aux-affine-sub-pf-4}
     \centering
     \begin{aligned}
       g_k  - \sum_{i\in \mathcal{E}\cup\mathcal{I}} \lambda^{k+1}_i \nabla c_i(x_k) = -B_k d_k.
     \end{aligned}
    \end{equation}
    Let $\lambda_i=-\lambda^{k+1}_i$, $i\in\mathcal{E}\cup\mathcal{I}$. Notice that 
$-\lambda_i^{k+1}\in N_{\Rbb}(\bar{c}_i(x,w))$ for $i\in\mathcal{E}$ and $-\lambda_i^{k+1}\in N_{\Rbb_+}(\bar{c}_i(x,w))$ for $i\in\mathcal{I}$.
    Then, by Lemma~\ref{lem:affine-step-size}, the first line in~\eqref{eqn:aux-affine-sub-pf-3} satisfies  
    \begin{equation}\label{eqn:aux-affine-sub-pf-5}
     \centering
     \begin{aligned}
       g_k -\sigma x_k +\sigma x_{k+1} + l d_k  - \sum_{i\in \mathcal{E}\cup\mathcal{I}} \lambda^{k+1}_i \nabla c_i(x_k) = -B_k d_k+\sigma d_k.
     \end{aligned}
    \end{equation}
    Next, from~\eqref{def:conjugate-subgradient},   
    \begin{equation}\label{eqn:aux-affine-sub-pf-6}
     \centering
     \begin{aligned}
      -x_{k+1}+x_k \in  -x_{k+1} + \partial F^*(-g_k+\sigma x_k)   .
     \end{aligned}
    \end{equation}
    The third line in~\eqref{eqn:aux-affine-sub-pf-3} becomes
    \begin{equation}\label{eqn:aux-affine-sub-pf-7}
     \centering
     \begin{aligned}
         -l d_k - \sum_{i\in \mathcal{E}\cup\mathcal{I}} \lambda^{k+1}_i \nabla^2 c_i(x_k) d_k.
     \end{aligned}
    \end{equation}
     Combining~\eqref{eqn:aux-affine-sub-pf-5},~\eqref{eqn:aux-affine-sub-pf-6} and~\eqref{eqn:aux-affine-sub-pf-7} into~\eqref{eqn:aux-affine-sub-pf-3},  
    \begin{equation}\label{eqn:aux-affine-sub-pf-8}
     \centering
     \begin{aligned}
        \begin{pmatrix} 
          - B_k d_k +\sigma d_k\\
          -d_k \\
          -l d_k - \sum_{i\in \mathcal{E}\cup\mathcal{I}} \lambda^{k+1}_i \nabla^2 c_i(x_k) d_k
         \end{pmatrix}  
               \subseteq \partial L(x_{k+1},-g_k+\sigma x_k,x_k)
   \end{aligned}
    \end{equation}
    Since $B_k$, $\lambda^{k+1}$ and $\nabla^2 c(x_k)$ are bounded, there exists $c_L>0$ such that~\eqref{eqn:aux-sub-1} is satisfied.
\end{proof}
We are now ready to prove the convergence of $\{x_k\}$.
\begin{theorem}\label{thm:local-affine-KL}
    Let $b\geq \sigma\geq \rho$ and $2b\geq \sigma+ l$. The sequence $\{x_k\}$ produced by Algorithm~\ref{alg:sqp}  
    converges to a KKT point of~\eqref{eqn:opt0}. 
\end{theorem}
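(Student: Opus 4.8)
The plan is to run the by-now-standard Kurdyka–Łojasiewicz descent argument on the potential function $L$, combining the three ingredients already assembled: the sufficient decrease of Lemma~\ref{lem:affine-lag-decrease}, the value/accumulation-set properties of Lemma~\ref{lem:aux-bound-1}, and the subgradient bound of Lemma~\ref{lem:lag-subgradient-1}, together with the fact that $L$ is a KL function by Assumption~\ref{assp:affine}. Write $w_k := (x_{k+1},-g_k+\sigma x_k,x_k)$ and $r_k := L(w_k)$. First I would dispose of the trivial case $d_k=0$: then $x_k$ already satisfies~\eqref{eqn:opt0-KKT}, the line search returns $x_{k+1}=x_k$, the subproblem data repeat, so $x_j=x_k$ for all $j\ge k$ and we are done by Theorem~\ref{thm:simp-KKT}. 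Assume henceforth $\norm{d_k}>0$ for all $k$. By Lemma~\ref{lem:affine-lag-decrease}, $\{r_k\}$ is eventually nonincreasing, and it is bounded below by~\eqref{eqn:aux-bound-1}, hence $r_k\downarrow\bar L$; in particular $\sum_k\norm{d_k}^2<\infty$, so $\norm{d_k}\to0$. By Lemma~\ref{lem:aux-bound-1}, the accumulation set $U$ of $\{w_k\}$ is nonempty, compact, and $L\equiv\bar L$ on $U$; and $r_k>\bar L$ for every $k$, since $r_k=\bar L$ would force $r_j\equiv\bar L$ and hence $\norm{d_j}=0$ for $j\ge k$, a contradiction.

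Next I would invoke the uniformized KL property — the standard consequence of Definition~\ref{def:KL} for a KL function that is constant on a compact set, see~\cite{bolte2014kl} — to obtain $\varepsilon,\eta>0$ and a desingularizing $\varphi$ with $\varphi'(L(z)-\bar L)\,\mathrm{dist}(0,\partial L(z))\ge1$ whenever $\mathrm{dist}(z,U)<\varepsilon$ and $\bar L<L(z)<\bar L+\eta$. Since $\{w_k\}$ is bounded with accumulation set exactly $U$, one has $\mathrm{dist}(w_k,U)\to0$; combined with $r_k\downarrow\bar L$ this places $w_k$ in the KL-active region for all large $k$. Lemma~\ref{lem:lag-subgradient-1} then gives $\varphi'(r_k-\bar L)\ge 1/(c_L\norm{d_k})$, and concavity of $\varphi$ together with Lemma~\ref{lem:affine-lag-decrease} in the form $r_k-r_{k+1}\ge\tfrac12 c_d\norm{d_k}^2$ yields
\begin{equation*}
  \varphi(r_k-\bar L)-\varphi(r_{k+1}-\bar L)\;\ge\;\varphi'(r_k-\bar L)\,(r_k-r_{k+1})\;\ge\;\frac{c_d}{2c_L}\,\norm{d_k}.
\end{equation*}
Telescoping from a large index $K$ onward gives $\sum_{k\ge K}\norm{d_k}\le \tfrac{2c_L}{c_d}\,\varphi(r_K-\bar L)<\infty$. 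Since $x_{k+1}-x_k=\alpha_k d_k$ with $\alpha_k\in(0,1]$, this makes $\{x_k\}$ a Cauchy sequence, so $x_k\to\bar x$; as $\bar x$ is then an accumulation point of $\{x_k\}$, Theorem~\ref{thm:simp-KKT} shows it is a KKT point of~\eqref{eqn:opt0}.

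The hard part will be the bookkeeping that certifies the iterates $w_k$ actually enter the uniformized KL neighborhood of $U$: proving $\mathrm{dist}(w_k,U)\to0$ from boundedness of $\{x_k\}$ and $\{g_k\}$, checking that the one-step index shift in Lemmas~\ref{lem:affine-lag-decrease} and~\ref{lem:lag-subgradient-1} is aligned so that the descent estimate (in $\norm{d_k}^2$) and the subgradient estimate (in $\norm{d_k}$) are anchored at the same $k$, and ruling out $r_k=\bar L$ so that $\varphi'$ is evaluated on $(0,\eta)$. Once these are settled, the telescoping/summation and the final appeal to Theorem~\ref{thm:simp-KKT} are routine, and the already-known $\norm{d_k}\to0$ (also obtainable from the merit-function decrease in Lemma~\ref{lem:merit-decrease}) rules out any degenerate behavior of a vanishing step in the KL inequality.
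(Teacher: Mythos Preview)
Your proposal is correct and follows essentially the same KL-descent argument as the paper: apply the uniformized KL inequality on the accumulation set $U$ (the paper invokes this implicitly in its definition of the neighborhood $V$), combine concavity of $\varphi$ with the subgradient bound of Lemma~\ref{lem:lag-subgradient-1} and the sufficient decrease of Lemma~\ref{lem:affine-lag-decrease} to obtain $\norm{d_k}\le\tfrac{2c_L}{c_d}\bigl[\varphi(r_k-\bar L)-\varphi(r_{k+1}-\bar L)\bigr]$, telescope, and conclude via Theorem~\ref{thm:simp-KKT}. Your treatment of the corner cases ($d_k=0$, $r_k=\bar L$) and the explicit check that the index shift in Lemmas~\ref{lem:affine-lag-decrease} and~\ref{lem:lag-subgradient-1} aligns both estimates at the same $\norm{d_k}$ are slightly more careful than the paper's exposition, but the route is identical.
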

\begin{proof}
    Since the algorithm is not modified, we can directly use the results from Section~\ref{sec:alg-convg}.  
    From the proof of Theorem~\ref{thm:simp-KKT}, we know that $\{d_k\}\to 0$ and $\phi(x_k,\bar{\theta})$ converges as it is monotonically non-decreasing and bounded. Similarly, $L(x_{k+1},-g_k+\sigma x_k, x_k)$ converges by Lemma~\ref{lem:affine-lag-decrease}.
    Let $\phi(x_k,\bar{\theta})\to\bar{\phi}$ and $L(x_{k+1},-g_k+\sigma x_k, x_k)\to \bar{L}$. 
    We consider only the case $L(x_{k+1},-g_k+\sigma x_k, x_k) > \bar{L}$ for all $k$, since otherwise the proof is trivial.

      Let $U$ denote the set of accumulation points of $\{x_{k+1},-g_k+\sigma x_k,x_k\}$  and $\{\bar{x},-\hat{g}+\sigma \hat{x},\hat{x}\}\in  U$ be an accumulation point. 
 Since $\phi(\cdot,\bar{\theta})$ is Lipschitz continuous in $x$, we have $\phi(\bar{x},\bar{\theta})=\bar{\phi}$ on $U$. 
      In addition, Lemma~\ref{lem:aux-bound-1} guarantees $L(\bar{x},-\hat{g}+\sigma \hat{x},\hat{x}) = \bar{L}$, for all $\{\bar{x},-\hat{g}+\sigma \hat{x},\hat{x}\}\in U$. 

      By the KL property of $L(\cdot,\cdot,\cdot)$, there exists $\epsilon, a>0$ and a continuously differentiable concave function $\varphi(\cdot)$ such that 
\begin{equation} \label{eqn:local-affine-KL-1}
 \centering
  \begin{aligned}
	  \varphi'(L(x,y,w) - \bar{L})  \cdot \text{dist}(0, \partial L(x,y,w)) \geq 1, 
  \end{aligned}
\end{equation}
	for $\{x,y,w\} \in V$, where 
\begin{equation} \label{eqn:local-affine-KL-2}
 \centering
  \begin{aligned}
	  V = \{(x,y,w):\text{dist}((x,y,w),U)<\epsilon \} \cap \{(x,y,w): \bar{L} < L(x,y,w)<\bar{L}+a\}. 
  \end{aligned}
\end{equation}
    Since $U$ is the set of the accumulation points, we have 
\begin{equation} \label{eqn:local-affine-KL-3}
 \centering
  \begin{aligned}
       \lim_{k\to\infty}  \text{dist}((x_{k+1},-g_k+\sigma x_k,x_k), U) = 0.
  \end{aligned}
\end{equation}
	Consequently, there exists $k_0>0$, such that for $k>k_0$, $\text{dist}((x_k,-g_k+\sigma x_k,x_k), U)<\epsilon$ and $\bar{L}< L(x_{k+1},-g_k+\sigma x_k,x_k) < \bar{L} +a$ for $k>k_0$. 
Let $k>k_0$ so that $(x_{k+1},-g_k+\sigma x_k,x_k) \in V$. From~\eqref{eqn:local-affine-KL-1}, we have  
\begin{equation} \label{eqn:local-affine-KL-4}
 \centering
  \begin{aligned}
	  \varphi'( L(x_{k+1},-g_k+\sigma x_k,x_k) - \bar{L})  \cdot \text{dist}(0, \partial L(x_{k+1},-g_k+\sigma x_k,x_k) ) \geq 1, \ \forall k>k_0.
  \end{aligned}
\end{equation}
   To simplify notation, we denote $ y_k = L(x_{k+1},-g_{k}+\sigma x_{k},x_{k}) - \bar{L}$. 
	Using the concavity of $\varphi$ and the KL inequality~\eqref{eqn:local-affine-KL-4}, we have 
\begin{equation} \label{eqn:local-affine-KL-5}
 \centering
  \begin{aligned}
      \relax [\varphi(y_k) &- \varphi(y_{k+1}) ] \cdot \text{dist} (0, \partial L(x_{k+1},-g_k+\sigma x_k,x_k))\\
	  &\geq \varphi'(y_k) (y_k - y_{k+1}) \cdot \text{dist} (0, \partial L(x_{k+1},-g_k+\sigma x_k,x_k))\\
	  &\geq y_k - y_{k+1} = L(x_{k+1},-g_{k}+\sigma x_{k},x_{k}) - L(x_{k+2},-g_{k+1}+\sigma x_{k+1},x_{k+1}).
  \end{aligned}
\end{equation}
	Applying Lemma~\ref{lem:affine-lag-decrease} and Lemma~\ref{lem:lag-subgradient-1} to~\eqref{eqn:local-affine-KL-5}, we have 
\begin{equation} \label{eqn:local-affine-KL-6}
 \centering
  \begin{aligned}
	\relax  [\varphi(y_k) - \varphi(y_{k+1}) ] c_L \norm{d_k} \geq \frac{1}{2}c_d \norm{d_k}^2.
  \end{aligned}
\end{equation}
   Thus,
\begin{equation} \label{eqn:local-affine-KL-7}
 \centering
  \begin{aligned}
	  \norm{d_k} < c_y [\varphi(y_k) - \varphi(y_{k+1}) ], 
  \end{aligned}
\end{equation}
   where $c_y = 2 c_L/c_d$. Summing both sides of~\eqref{eqn:local-affine-KL-7} from $k_0$, we obtain 
\begin{equation} \label{eqn:local-affine-KL-8}
 \centering
  \begin{aligned}
	  \sum_{k=N}^{\infty} \norm{x_{k+1}-x_k} \leq& c_y \sum_{k=N}^{\infty} [\varphi(y_k) - \varphi(y_{k+1}) ]
	  \leq c_y \varphi(y_N) < \infty.
  \end{aligned}
\end{equation}
   Therefore, $\{x_k\}$ is convergent. From Theorem~\ref{thm:simp-KKT}, $\{x_k\}$ converges to a KKT point of~\eqref{eqn:opt0}.
\end{proof}
If the KL properties in Assumption~\ref{assp:affine} is with exponent $\alpha\in [0,1)$, an upper bound of the convergence rate can be shown. An example is given in the following Theorem.
\begin{theorem}\label{thm:local-affine-KL-2}
    Let $b\geq \sigma\geq \rho$ and $2b \geq\sigma+ l$. If~\eqref{eqn:opt1-KLfunction} is a KL function with exponent $\alpha\in (0,\frac{1}{2}]$, then for $k$ large enough,  there exists $q_0\in(0,1)$ and $q_1>0$ such that 
	$\norm{x_k-\bar{x}} \leq q_1 q_0^k$, where $x_k\to\bar{x}$.
\end{theorem}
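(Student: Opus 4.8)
The plan is to run the standard Attouch--Bolte rate argument on the potential $L$ along the triple sequence $\zeta_k := (x_{k+1}, -g_k + \sigma x_k, x_k)$, reusing everything already assembled in Section~\ref{sec:affine-local}: the sufficient-decrease estimate (Lemma~\ref{lem:affine-lag-decrease}), the subgradient bound (Lemma~\ref{lem:lag-subgradient-1}), the telescoping estimate $\norm{d_k} < c_y[\varphi(y_k)-\varphi(y_{k+1})]$ from the proof of Theorem~\ref{thm:local-affine-KL}, and the convergence $x_k \to \bar{x}$ itself. Write $r_k := L(\zeta_k) - \bar{L} = y_k$; by Lemma~\ref{lem:affine-lag-decrease} the sequence $\{r_k\}$ is nonincreasing and by Theorem~\ref{thm:local-affine-KL} it tends to $0$. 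If $r_k = 0$ for some $k$ then the decrease inequality forces $d_j = 0$ for all $j \geq k$ and the conclusion is trivial, so I assume $r_k > 0$ throughout; then for $k$ large $r_k$ also lies in the admissible level-set of the KL neighborhood.

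\emph{Step 1: a geometric recursion for $r_k$.} With exponent $\alpha$ the desingularizing function can be taken as $\varphi(s) = a_0 s^{1-\alpha}$, so $\varphi'(s) = a_0(1-\alpha)s^{-\alpha}$ and the KL inequality~\eqref{eqn:local-affine-KL-4} (valid for $k$ large, exactly as in the proof of Theorem~\ref{thm:local-affine-KL}) reads $a_0(1-\alpha)\,r_k^{-\alpha}\,\mathrm{dist}(0,\partial L(\zeta_k)) \geq 1$. Combining with Lemma~\ref{lem:lag-subgradient-1} gives $r_k^{\alpha} \leq C\norm{d_k}$ with $C := a_0(1-\alpha)c_L$, hence $\norm{d_k}^2 \geq C^{-2}r_k^{2\alpha}$; inserting this into the index-shifted form of Lemma~\ref{lem:affine-lag-decrease}, namely $r_k - r_{k+1} \geq \frac{1}{2}c_d\norm{d_k}^2$, yields $r_k - r_{k+1} \geq \beta\, r_k^{2\alpha}$ with $\beta := c_d/(2C^2)$. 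Here the hypothesis $\alpha \leq 1/2$ enters: since $2\alpha \leq 1$ and $r_k \to 0$, there is $k_1$ with $r_k \leq 1$ for $k \geq k_1$, so $r_k^{2\alpha} \geq r_k$ and therefore $r_{k+1} \leq (1-\beta)r_k$. Nonnegativity of $r_{k+1}$ forces $\beta \in (0,1]$; discarding the trivial case $\beta = 1$ we get $\tilde q := 1-\beta \in (0,1)$ and $r_k \leq \tilde q^{\,k-k_1}r_{k_1}$ for all $k \geq k_1$.

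\emph{Step 2: transfer the rate to $\norm{x_k - \bar{x}}$.} Since $x_k \to \bar{x}$, the triangle inequality gives $\norm{x_k - \bar{x}} \leq \sum_{j=k}^{\infty}\norm{d_j}$, and summing $\norm{d_j} < c_y[\varphi(y_j) - \varphi(y_{j+1})]$ telescopes (using $\varphi(y_j) \to \varphi(0) = 0$) to $\norm{x_k - \bar{x}} \leq c_y\varphi(r_k) = c_y a_0\, r_k^{1-\alpha}$. Inserting the bound from Step 1 gives $\norm{x_k - \bar{x}} \leq c_y a_0\, r_{k_1}^{1-\alpha}\,\tilde q^{\,-k_1(1-\alpha)}\,\bigl(\tilde q^{\,1-\alpha}\bigr)^{k}$, which is of the claimed form $q_1 q_0^{\,k}$ with $q_0 := \tilde q^{\,1-\alpha} \in (0,1)$ and $q_1$ the constant prefactor; this holds for $k \geq \max(k_0,k_1)$, i.e.\ for $k$ large enough, and absorbing finitely many early indices into $q_1$ extends it to all $k$ if desired.

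I do not expect a genuine obstacle, since the argument is entirely assembled from lemmas already proved. The one point deserving care is the exponent bookkeeping in Step 1 --- that $2\alpha \leq 1$ together with $r_k \to 0$ is precisely what upgrades the polynomial-decrease inequality $r_k - r_{k+1} \geq \beta r_k^{2\alpha}$ into a true linear contraction, and that the contraction factor is automatically in $(0,1)$. For $\alpha = 0$ the same inequality instead yields finite termination, and for $\alpha \in (1/2,1)$ only a sublinear bound; both lie outside the stated range and so need not be addressed.
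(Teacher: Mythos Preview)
Your proof is correct and follows essentially the same Attouch--Bolte scheme as the paper: KL inequality plus subgradient bound plus sufficient decrease to get a geometric recursion on $r_k$, then the telescoping $\varphi$-estimate to pass to $\norm{x_k-\bar{x}}$. The only cosmetic difference is that the paper bounds $y_{k+1}\leq y_k^{2\alpha}\leq M(y_k-y_{k+1})$ and solves to $y_{k+1}\leq \tfrac{M}{M+1}y_k$ (valid for any $M>0$), whereas you bound $r_k\leq r_k^{2\alpha}$ to get $r_{k+1}\leq(1-\beta)r_k$ and then argue $\beta\leq 1$ from nonnegativity; both are fine.
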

\begin{proof}
        From Theorem~\ref{thm:local-affine-KL}, we have $x_k\to\bar{x}$.
        Let $L(x_{k+1},-g_k+\sigma x_k, x_k) \to \bar{L}$. Define $ y_k = L(x_{k+1},-g_k+\sigma x_k, x_k) - \bar{L}$.
	Using $\varphi(z) = a_0 z^{1-\alpha}$, by~\eqref{eqn:local-affine-KL-4}, at $(x_{k+1},-g_k+\sigma x_k,x_k)$, we have
\begin{equation} \label{eqn:local-rate-pf-1}
 \centering
  \begin{aligned}
	  a_0 (1-\alpha) y_k^{-\alpha}  \cdot \text{dist}(0, \partial L(x_{k+1},-g_k+\sigma x_k, x_k) ) \geq 1,
  \end{aligned}
\end{equation}
   for $k$ large enough. Therefore, from Lemma~\ref{lem:lag-subgradient-1} and Lemma~\ref{lem:affine-lag-decrease}, 
\begin{equation} \label{eqn:local-rate-pf-2}
 \centering
  \begin{aligned}
	  y_k^{\alpha} \leq a_0 (1-\alpha) \text{dist}(0, \partial L(x_{k+1},-g_k+\sigma x_k, x_k)) 
	  &\leq  a_0 (1-\alpha) c_L \norm{d_k}\\
	  &\leq a_0 (1-\alpha) c_L  [ \frac{2}{c_d}(y_k -y_{k+1}) ]^{\frac{1}{2}}.   
  \end{aligned}
\end{equation}
	Let $c_y = a_0 c_L \sqrt{2/c_d}$.
	Taking square on both sides of~\eqref{eqn:local-rate-pf-2}, we have
 \begin{equation} \label{eqn:local-rate-pf-3}
 \centering
  \begin{aligned}
	  y_k^{2 \alpha} \leq& c_y^2 (1-\alpha)^2 (y_k  - y_{k+1}).   
  \end{aligned}
\end{equation}
	Without losing generality, assume that $y_k<1$. Since $\alpha\in(0,\frac{1}{2}]$, we can write 
 \begin{equation} \label{eqn:local-rate-pf-4}
 \centering
  \begin{aligned}
	  y_{k+1} \leq y_k \leq  y_k^{2 \alpha} \leq& M (y_k  - y_{k+1}),
  \end{aligned}
\end{equation}
  where $M=c_y^2(1-\alpha)^2>0$. Equivalently, 
 \begin{equation} \label{eqn:local-rate-pf-5}
 \centering
  \begin{aligned}
	  y_{k+1}  \leq&  \frac{M}{M+1} y_k.
  \end{aligned}
\end{equation}
	Thus, for $k$ large enough, there exists $y_0>0$ and $q_b\in(0,1)$ so that 
 \begin{equation} \label{eqn:local-rate-pf-5.5}
 \centering
  \begin{aligned}
	  y_{k}  \leq&  y_0 q_b^k.
  \end{aligned}
\end{equation}
	Therefore, $\{L(x_{k+1},-g_k+\sigma x_k,x_k)\}$ converges to $\bar{L}$ $R$-linearly. From~\eqref{eqn:local-affine-KL-7}, we know
 \begin{equation} \label{eqn:local-rate-pf-6}
 \centering
  \begin{aligned}
	  \norm{x_{k+1}-x_k} \leq c_z a_0  [y_k^{1-\alpha} - y_{k+1}^{1-\alpha}],
  \end{aligned}
\end{equation}
 where $c_z= 2c_L/c_d$.
 Summing both sides of~\eqref{eqn:local-rate-pf-6} from $k$ to $k+j$, we have 
 \begin{equation} \label{eqn:local-rate-pf-7}
 \centering
  \begin{aligned}
	  \norm{x_k-x_{k+j}} \leq \sum_{n=k+1}^{n=k+j} \norm{x_{n}-x_{n-1}} \leq \sum_{n=k+1}^{n=k+j} c_z a_0  [y_{n-1}^{1-\alpha} - y_{n}^{1-\alpha}] = c_z a_0 (y_k^{1-\alpha} -y_{k+j}^{1-\alpha}),
  \end{aligned}
\end{equation}
	Given that $x_k\to\bar{x}$ and $y_k \to 0$, letting $j \to \infty$,~\eqref{eqn:local-rate-pf-7} implies
 \begin{equation} \label{eqn:local-rate-pf-8}
 \centering
  \begin{aligned}
	  \norm{x_k-\bar{x}} \leq c_z a_0 y_k^{1-\alpha}. 
  \end{aligned}
\end{equation}
	By~\eqref{eqn:local-rate-pf-5.5}, 
 \begin{equation} \label{eqn:local-rate-pf-9}
 \centering
  \begin{aligned}
	  \norm{x_k-\bar{x}} \leq c_z a_0 (y_0 q_b^k)^{1-\alpha} = c_z a_0 y_0^{1-\alpha} q_b^{k (1-\alpha)}. 
  \end{aligned}
\end{equation}
   Therefore, there exists $q_0 \in (0,1)$ and $q_1>0$ such that $\norm{x_k-\bar{x}} \leq q_1 q_0^k$.
\end{proof}
\begin{remark}
   The rate of convergence result depends on $\alpha\in [0,1)$. For $\alpha \notin (0,\frac{1}{2}]$, readers are referred to~\cite{attouch2009convergence,bolte2014kl,wen2018proximal,liu2019pdcae} for convergence rate upper bound. 
\end{remark}
\subsection{Local convergence with nonlinear constraints}\label{sec:nonlinear-local}
For many smooth SQP methods with line search, the transition to a full step size can occur using second-order correction, or selecting augmented Lagrangian function as the merit function instead to avoid the Maratos effect. The latter has been shown to work for ill-posed or degenerate nonlinear problems using stabilized SQP~\cite{gill2017stabilizedsqp}. The Hessian approximation $B_k$ is also required to be close to the true Hessian at the KKT points~\cite{Nocedal_book} 

Since the objective function is not differentiable, $B_k$ no longer serves as an approximation of the Hessian.
Further, with nonlinear constraints, their values in the merit function are no longer zero, as opposed to the case in Section~\ref{sec:affine-local}. Consequently, a full step is not guaranteed to occur as $k$ increases. 
To take advantage of KL properties, we choose to maintain a positive definite $B_k$ while keeping the exact penalty merit function. We make an additional assumption on the relationship between $B_k$ and the Lagrange multipliers to facilitate a full step size eventually, mimicking the assumptions on $B_k$ in the smooth case. 
The KL property assumption is as follows.
\begin{assumption}\label{assp:KL}
   The potential function $L$ is a KL function. The constraint functions $c_i,i\in\mathcal{E}\cup\mathcal{I}$ are twice continuously differentiable.
\end{assumption}
Define $c_h = m H$, where $H$ is given in Assumption~\ref{assp:boundedHc}. We make the following additional assumption.
\begin{assumption}\label{assp:Bk-c}
   For $k$ large enough, there exists $ c_b \in(0,1)$ such that the positive definite $B_k$ and its corresponding $b$ in Assumption~\ref{assp:Bk} satisfies $c_b b -  \bar{\theta} c_h \geq 0$. 
\end{assumption}
Assumptions~\ref{assp:KL} and~\ref{assp:Bk-c} are assumed valid in this section.
Since Algorithm~\ref{alg:sqp} remains the same, Lemma~\ref{lem:line-search-eqcons} to~\ref{lem:merit-decrease} and Theorem~\ref{thm:simp-KKT} stand. Moreover, the results concerning $L(\cdot,\cdot,\cdot)$ in Section~\ref{sec:affine-local} remain valid as $\bar{c}_i(\cdot)$ are already linearized. 
A full step result similar to that of Lemma~\ref{lem:affine-step-size} can be achieved with Assumption~\ref{assp:Bk-c}.  
\begin{lemma}\label{lem:step-size}
   Let $b\geq \sigma\geq\rho$. The line search condition~\eqref{eqn:line-search-cond} is satisfied with $\alpha_k=1$ and some $\eta \in(0,1)$ for $k$ large enough.
\end{lemma}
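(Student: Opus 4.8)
The plan is to recycle, essentially verbatim, the merit-decrease estimate already established inside the proof of Lemma~\ref{lem:line-search-merit}, now specialized to a full step and to indices $k$ large enough. First I would invoke Proposition~\ref{prop:large-k}: there is $k_0$ such that for every $k>k_0$ the penalty parameter is frozen, $\theta_k=\bar{\theta}$, and the slack solutions vanish, $v^k=w^k=t^k=0$. With $c_h=c_h^e+c_h^i=mH$, repeating the chain of inequalities in the proof of Lemma~\ref{lem:line-search-merit} (namely, combining~\eqref{eqn:sqp-penal-merit-pf-3} with Lemma~\ref{lem:line-search-eqcons} and Lemma~\ref{lem:line-search-ineqcons}) and then setting $\alpha_k=1$ yields, for $k>k_0$,
\begin{equation*}
   \phi(x_k,\bar{\theta})-\phi(x_{k+1},\bar{\theta}) \geq \tfrac12\,d_k^T B_k d_k+\tfrac12\bigl(b-\rho-\bar{\theta} c_h\bigr)\norm{d_k}^2 .
\end{equation*}

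The only real obstacle is that, unlike the affine case of Lemma~\ref{lem:affine-step-size}, the coefficient $b-\rho-\bar{\theta} c_h$ need not be nonnegative: Assumption~\ref{assp:Bk-c} only provides $\bar{\theta} c_h\leq c_b b$ with $c_b\in(0,1)$, and when $\rho$ is close to $b$ this coefficient can be strictly negative. The fix is to borrow a controlled fraction of the quadratic model term. Splitting $\tfrac12 d_k^T B_k d_k=\tfrac{\eta}{2}d_k^T B_k d_k+\tfrac{1-\eta}{2}d_k^T B_k d_k$ and lower bounding the second piece by $\tfrac{(1-\eta)b}{2}\norm{d_k}^2$ via Assumption~\ref{assp:Bk} gives
\begin{equation*}
   \phi(x_k,\bar{\theta})-\phi(x_{k+1},\bar{\theta}) \geq \tfrac{\eta}{2}\,d_k^T B_k d_k+\tfrac12\bigl[(2-\eta)b-\rho-\bar{\theta} c_h\bigr]\norm{d_k}^2 .
\end{equation*}

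It then remains to pick $\eta$ making the bracket nonnegative. Using $\bar{\theta} c_h\leq c_b b$ followed by $b\geq\rho$, the bracket is at least $(2-\eta-c_b)b-\rho$, which is $\geq b-\rho\geq 0$ whenever $\eta\leq 1-c_b$; since $c_b\in(0,1)$ this range is a nonempty subinterval of $(0,1)$. For any such $\eta$ the last display reads $\phi(x_k,\bar{\theta})-\phi(x_{k+1},\bar{\theta})\geq \tfrac{\eta}{2}d_k^T B_k d_k=\eta\alpha_k\tfrac12 d_k^T B_k d_k$ at $\alpha_k=1$, which is precisely the line search criterion~\eqref{eqn:line-search-cond} with $\theta_k=\bar{\theta}$. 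Hence for all $k>k_0$ the first backtracking trial $\alpha_k=1$ is accepted. (Since the line search test is easier to pass for smaller $\eta$, it suffices that the algorithm be instantiated with $\eta\in(0,1-c_b]$.) I expect no difficulty beyond this bookkeeping: the point of Assumption~\ref{assp:Bk-c} is exactly to supply the slack which, together with $b\geq\rho$, absorbs the constraint curvature contribution at a full step.
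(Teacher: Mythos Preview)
Your proposal is correct and follows essentially the same route as the paper's proof: both combine the upper-$\Ctwo$ descent inequality with Lemmas~\ref{lem:line-search-eqcons} and~\ref{lem:line-search-ineqcons}, then use Assumption~\ref{assp:Bk-c} together with $b\geq\rho$ to show the bound $\phi(x_k,\bar{\theta})-\phi(x_{k+1},\bar{\theta})\geq \tfrac{1}{2}(1-c_b)\,d_k^T B_k d_k$ at $\alpha_k=1$, yielding acceptance for any $\eta\leq 1-c_b$. The only cosmetic difference is that the paper first absorbs the $\rho$-term into half of $d_k^T B_k d_k$ and then handles $\bar{\theta}c_h$, whereas you carry both residuals and perform a single $\eta$-split at the end; the arithmetic and the final constraint on $\eta$ coincide.
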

\begin{proof}
    The proof is similar to that of Lemma~\ref{lem:affine-step-size}. Similar to~\eqref{eqn:sqp-affine-step-size-pf-1}, we can write 
    \begin{equation} \label{eqn:sqp-step-size-pf-1} 
 \centering
  \begin{aligned}
	  \phi&(x_k,\bar{\theta}) - \phi(x_{k+1},\bar{\theta})  = f(x_k) -f(x_{k+1})+ \bar{\theta} v(x_k)-\bar{\theta}  v(x_{k+1}) \\
			 \geq & \alpha_k d_k^TB_kd_k - \frac{\rho}{2} \alpha_k^2\norm{d_k}^2- \alpha_k \sum_{i\in \mathcal{E}\cup\mathcal{I}} \lambda_i^{k+1} \nabla c_i(x_k)^T d_k + \bar{\theta} v(x_k)-\bar{\theta} v(x_{k+1}) \\
  \end{aligned}
\end{equation}
   Applying Lemma~\ref{lem:line-search-eqcons},~\ref{lem:line-search-ineqcons} with $c_h=c_h^e+c_h^i$ and $b\geq\rho$, we have 
    \begin{equation} \label{eqn:sqp-step-size-pf-2} 
 \centering
  \begin{aligned}
	  \phi(x_k,\bar{\theta}) - \phi(x_{k+1},\bar{\theta})  
			\geq& \frac{1}{2}\alpha_k d_k^TB_kd_k  -\frac{1}{2}\bar{\theta} \alpha_k^2 c_h \norm{d_k}^2  
			= \frac{1}{2}d_k^T (\alpha_k B_k  -\bar{\theta} \alpha_k^2 c_h I) d_k.\\
  \end{aligned}
\end{equation}
   From Assumption~\ref{assp:Bk-c}, if $\alpha_k=1$,  
    \begin{equation} \label{eqn:sqp-step-size-pf-3} 
 \centering
  \begin{aligned}
	  \phi(x_k,\bar{\theta}) - \phi(x_{k+1},\bar{\theta})  
                        \geq&   \frac{1}{2}(1-c_b) d_k^T B_k d_k\\
   \end{aligned}
\end{equation}
  Thus, $\alpha_k$ meets the line search condition~\eqref{eqn:line-search-cond} if $\eta<(1-c_b)$.
\end{proof}
The sufficient decrease result for~\eqref{eqn:opt1-KLfunction} follows. 
\begin{lemma}\label{lem:lag-decrease}
   Let $2b\geq l+\sigma$ and $b\geq \sigma\geq\rho$. If we choose $l>c_b b$, then for $k$ large enough, there exists constants $c_d>0$ such that 
   \begin{equation} \label{eqn:lag-0}
         \centering
         \begin{aligned}
	L(x_{k}, -g_{k-1}+\sigma x_{k-1} , x_{k-1})-L(x_{k+1}, -g_{k}+\sigma x_{k},x_{k}) \geq  \frac{1}{2} c_d \norm{d_{k-1}}^2.
      \end{aligned}
     \end{equation}
\end{lemma}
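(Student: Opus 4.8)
The plan is to follow the proof of Lemma~\ref{lem:affine-lag-decrease} essentially line for line, departing from it only where affineness of the $c_i$ was used, and to absorb the extra term produced by that departure using Assumption~\ref{assp:boundedHc} and Assumption~\ref{assp:Bk-c}. First I would take $k$ large enough that Proposition~\ref{prop:large-k} and Lemma~\ref{lem:step-size} apply, so that $\theta_k=\bar{\theta}$, $v^k=w^k=t^k=0$, and $\alpha_k=1$; then $x_k=x_{k-1}+d_{k-1}$, $x_{k+1}=x_k+d_k$, and from~\eqref{eqn:sqp-penal-KKT-1} with vanishing slacks the linearized constraint values $c_i(x_{k-1})+\nabla c_i(x_{k-1})^Td_{k-1}$ equal $0$ for $i\in\mathcal{E}$ and are $\geq 0$ for $i\in\mathcal{I}$, so $i_{\bar{C}}(x_k,x_{k-1})=0$ and likewise $i_{\bar{C}}(x_{k+1},x_k)=0$. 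Since the indicator terms vanish and the conjugate identity $F(x_k)+F^*(z_k)=z_k^Tx_k$ (with $z_k=-g_k+\sigma x_k\in\partial F(x_k)$, using that $\bar{\partial}f=\partial^+f$ for the upper-regular $f$) is unchanged, the algebra of~\eqref{eqn:affine-lag-2}--\eqref{eqn:affine-lag-3} carries over verbatim and yields
\[
L(x_{k+1},z_k,x_k)-L(x_k,z_{k-1},x_{k-1}) = g_k^Td_k + \frac{1}{2}(l+\sigma)\norm{d_k}^2 + f(x_k) - g_{k-1}^Td_{k-1} - f(x_{k-1}) - \frac{1}{2}(l+\sigma)\norm{d_{k-1}}^2 .
\]

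Next I would substitute, as in~\eqref{eqn:affine-lag-4}: the upper-$\Ctwo$ inequality~\eqref{eqn:uppc2-def} at $x_{k-1}$ with step $d_{k-1}$ to bound $f(x_k)-f(x_{k-1})-g_{k-1}^Td_{k-1}\leq\frac{\rho}{2}\norm{d_{k-1}}^2$; the first line of~\eqref{eqn:sqp-penal-KKT-1} dotted with $d_k$ to write $g_k^Td_k=-d_k^TB_kd_k+\sum_{i}\lambda_i^{k+1}\nabla c_i(x_k)^Td_k$; and the consistency of the linearization together with complementarity and $t^k=0$ to write $\sum_i\lambda_i^{k+1}\nabla c_i(x_k)^Td_k=-\sum_i\lambda_i^{k+1}c_i(x_k)$. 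The only genuinely new estimate is of $-\sum_i\lambda_i^{k+1}c_i(x_k)$: in the affine case this was $\leq 0$ because $c_i(x_k)$ was exactly $0$ (resp.\ $\geq 0$), whereas here Assumption~\ref{assp:boundedHc} together with $c_i(x_{k-1})+\nabla c_i(x_{k-1})^Td_{k-1}=0$ for $i\in\mathcal{E}$ (resp.\ $\geq 0$ for $i\in\mathcal{I}$) gives $|c_i(x_k)|\leq\frac{H}{2}\norm{d_{k-1}}^2$ for $i\in\mathcal{E}$ and $c_i(x_k)\geq -\frac{H}{2}\norm{d_{k-1}}^2$ for $i\in\mathcal{I}$. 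Combining with $|\lambda_i^{k+1}|\leq\bar{\theta}$ from Lemma~\ref{lem:lambda-sqp-prop} (the consistent, i.e.\ $A_k$, cases) and $\lambda_i^{k+1}\geq 0$ for $i\in\mathcal{I}$, and summing over $i\in\mathcal{E}\cup\mathcal{I}$ with $c_h=mH$, yields $-\sum_i\lambda_i^{k+1}c_i(x_k)\leq\frac{1}{2}\bar{\theta}c_h\norm{d_{k-1}}^2$.

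Collecting these and using $d_k^TB_kd_k\geq b\norm{d_k}^2$ from Assumption~\ref{assp:Bk} then produces
\[
L(x_{k+1},z_k,x_k)-L(x_k,z_{k-1},x_{k-1}) \leq -\frac{1}{2}(2b-l-\sigma)\norm{d_k}^2 - \frac{1}{2}\bigl(l+\sigma-\rho-\bar{\theta}c_h\bigr)\norm{d_{k-1}}^2 .
\]
Finally, $2b\geq l+\sigma$ makes the $\norm{d_k}^2$ coefficient nonpositive, while the choice $l>c_bb$, Assumption~\ref{assp:Bk-c} (which gives $\bar{\theta}c_h\leq c_bb$), and $\sigma\geq\rho$ give $l+\sigma-\rho-\bar{\theta}c_h\geq l-c_bb>0$, so setting $c_d:=l+\sigma-\rho-\bar{\theta}c_h$ and rearranging finishes the argument. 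The main obstacle, relative to the affine case, is the bookkeeping around the perturbed constraint values $c_i(x_k)$ and verifying that the extra $\frac{1}{2}\bar{\theta}c_h\norm{d_{k-1}}^2$ they contribute is precisely what the condition in Assumption~\ref{assp:Bk-c} (together with $l>c_bb$) is designed to dominate; every other step is a transcription of the proof of Lemma~\ref{lem:affine-lag-decrease}.
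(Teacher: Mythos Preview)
Your proposal is correct and follows essentially the same approach as the paper: both proofs reuse the algebra of Lemma~\ref{lem:affine-lag-decrease} verbatim up to the point where $-\sum_i\lambda_i^{k+1}c_i(x_k)$ appears, then bound this extra term by $\tfrac{1}{2}\bar{\theta}c_h\norm{d_{k-1}}^2$ via Assumption~\ref{assp:boundedHc} and Lemma~\ref{lem:lambda-sqp-prop}, arriving at the identical inequality~\eqref{eqn:lag-decrease-3} and concluding with the same parameter conditions. Your write-up is in fact slightly more explicit than the paper's about why $\sum_i\lambda_i^{k+1}\nabla c_i(x_k)^Td_k=-\sum_i\lambda_i^{k+1}c_i(x_k)$ and about the sign-splitting between $\mathcal{E}$ and $\mathcal{I}$ in the bound.
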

\begin{proof}
  Similar to~\eqref{eqn:affine-lag-4} in the proof of Lemma~\ref{lem:affine-lag-decrease}, we can arrive at
      \begin{equation} \label{eqn:lag-decrease-1}
         \centering
         \begin{aligned}
      L(&x_{k+1}, -g_k+\sigma x_k , x_k)-L(x_{k}, -g_{k-1}+\sigma x_{k-1},x_{k-1})\\
        &\leq -b \norm{d_k}^2+\frac{1}{2}(l+\sigma)\norm{d_k}^2-\frac{1}{2} (l+\sigma-\rho)\norm{d_{k-1}}^2 - \sum_{i\in\mathcal{E}\cup\mathcal{I}}\lambda^{k+1}_i c_i(x_k). \\
      \end{aligned}
     \end{equation}
     Assumption~\ref{assp:boundedHc} and third line in~\eqref{eqn:sqp-penal-KKT-1} imply
      \begin{equation} \label{eqn:lag-decrease-2}
         \centering
         \begin{aligned}
            \norm{c_i(x_k) - c_i(x_{k-1}) -\nabla c_i(x_{k-1})^T d_{k-1}} 
             =\norm{c_i(x_k)}\leq \frac{1}{2}H \norm{d_{k-1}}^2, \ i\in\mathcal{E}&,\\
            c_i(x_k) \geq c_i(x_{k-1}) +\nabla c_i(x_{k-1})^T d_{k-1} - \frac{1}{2}H \norm{d_{k-1}}^2\geq - \frac{1}{2}H \norm{d_{k-1}}^2, \ i \in\mathcal{I}&.
      \end{aligned}
     \end{equation}
    Notice that $\lambda_i^k\geq 0$ for $i\in\mathcal{I}$. Using~\eqref{eqn:lag-decrease-2} and Lemma~\ref{lem:lambda-sqp-prop},~\eqref{eqn:lag-decrease-1} becomes  
      \begin{equation} \label{eqn:lag-decrease-3}
         \centering
         \begin{aligned}
           L(&x_{k+1}, -g_k+\sigma x_k , x_k)-L(x_{k}, -g_{k-1}+\sigma x_{k-1},x_{k-1})\\
             &\leq -\frac{1}{2}(2b-l-\sigma) \norm{d_k}^2-\frac{1}{2}(l+\sigma-\rho)\norm{d_{k-1}}^2 + \sum_{i\in\mathcal{E}\cup\mathcal{I}} |\lambda^{k+1}_i| \frac{1}{2} H \norm{d_{k-1}}^2 \\
           &\leq -\frac{1}{2}(2b-l-\sigma) \norm{d_k}^2-\frac{1}{2}(l+\sigma-\rho-\bar{\theta}c_h)\norm{d_{k-1}}^2.\\
         \end{aligned}
     \end{equation}
   From Assumption~\ref{assp:Bk-c}, $2b\geq l+\sigma$, $b\geq \sigma\geq \rho$ and $l>c_b b$, the proof is complete.
\end{proof}
Lemma~\ref{lem:lag-subgradient-1} is still valid with similar proofs, \textit{i.e.}, the subgradient of $L(\cdot,\cdot,\cdot)$ satisfies~\eqref{eqn:aux-sub-1}.
The convergence of $\{x_k\}$ follows.
\begin{theorem}\label{thm:local-KL}
 Given Assumption~\ref{assp:KL} and~\ref{assp:Bk-c}, let $2b\geq l+\sigma$, $l/c_b>b\geq \sigma\geq \rho$.  The sequence $\{x_k\}$ generated by Algorithm~\ref{alg:sqp} converges 
    to a KKT point of~\eqref{eqn:opt0}. 
\end{theorem}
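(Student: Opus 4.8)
The plan is to mirror the argument of Theorem~\ref{thm:local-affine-KL}, replacing the affine-specific ingredients by their nonlinear counterparts established earlier in this subsection. First I would collect the facts that survive from the global theory: by Theorem~\ref{thm:simp-KKT} and its proof, $\{d_k\}\to 0$, the merit sequence $\{\phi(x_k,\bar\theta)\}$ is nonincreasing and convergent, and every accumulation point of $\{x_k\}$ is a KKT point of~\eqref{eqn:opt0}; by Proposition~\ref{prop:large-k} we have $\theta_k=\bar\theta$ and $v^k=w^k=t^k=0$ for all $k>k_0$; and by Lemma~\ref{lem:step-size} the full step $\alpha_k=1$ is accepted for $k$ large. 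The last two facts together give $c_i(x_{k+1})=c_i(x_k)+\nabla c_i(x_k)^T d_k$ with the linearized constraints satisfied exactly, so $(x_{k+1},x_k)\in\bar C$ and hence $i_{\bar C}(x_{k+1},x_k)=0$ for $k$ large; this is exactly what makes the lower-bound estimate $L(x,y,w)\ge f(x)+\frac12 l\norm{x-w}^2+i_{\bar C}(x,w)$ and the rest of Lemma~\ref{lem:aux-bound-1} go through verbatim here. Consequently, by Lemma~\ref{lem:lag-decrease} the sequence $\{L(x_{k+1},-g_k+\sigma x_k,x_k)\}$ is nonincreasing and bounded below, hence converges to some $\bar L$, and by Lemma~\ref{lem:aux-bound-1} the set $U$ of accumulation points of $\{(x_{k+1},-g_k+\sigma x_k,x_k)\}$ is nonempty and compact with $L\equiv\bar L$ on $U$.

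Next I would dispose of the trivial case: if $L(x_{k+1},-g_k+\sigma x_k,x_k)=\bar L$ for some $k$, then by monotonicity this holds for all larger indices, and Lemma~\ref{lem:lag-decrease} forces $d_{k-1}=0$ eventually, so $\{x_k\}$ is eventually constant and the claim is immediate. Otherwise $y_k:=L(x_{k+1},-g_k+\sigma x_k,x_k)-\bar L>0$ for all $k$. I would then invoke the KL property of $L$ (Assumption~\ref{assp:KL}): there are $\epsilon,a>0$ and a concave $C^1$ function $\varphi$ with $\varphi'(L(x,y,w)-\bar L)\,\mathrm{dist}(0,\partial L(x,y,w))\ge 1$ on the set $V=\{\mathrm{dist}((x,y,w),U)<\epsilon\}\cap\{\bar L<L<\bar L+a\}$. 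Since $\mathrm{dist}((x_{k+1},-g_k+\sigma x_k,x_k),U)\to 0$ and $y_k\downarrow 0$, there is $k_0$ with $(x_{k+1},-g_k+\sigma x_k,x_k)\in V$ for all $k>k_0$, so the KL inequality applies along the sequence.

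Finally I would run the standard telescoping estimate: using concavity of $\varphi$ as in~\eqref{eqn:local-affine-KL-5}, the KL inequality, the sufficient decrease of Lemma~\ref{lem:lag-decrease}, and the subgradient bound $\mathrm{dist}(0,\partial L(x_{k+1},-g_k+\sigma x_k,x_k))\le c_L\norm{d_k}$ of Lemma~\ref{lem:lag-subgradient-1} (whose proof in Section~\ref{sec:affine-local} carries over since $c_i$ are $C^2$ by Assumption~\ref{assp:KL} and the first equation of~\eqref{eqn:sqp-penal-KKT-1} with $\alpha_k=1$ still holds), one obtains $\bigl[\varphi(y_k)-\varphi(y_{k+1})\bigr]c_L\norm{d_k}\ge \tfrac12 c_d\norm{d_k}^2$, hence $\norm{x_{k+1}-x_k}=\norm{d_k}\le c_y\bigl[\varphi(y_k)-\varphi(y_{k+1})\bigr]$ with $c_y=2c_L/c_d$. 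Summing from $k_0$ gives $\sum_{k\ge k_0}\norm{x_{k+1}-x_k}\le c_y\varphi(y_{k_0})<\infty$, so $\{x_k\}$ is Cauchy and converges; Theorem~\ref{thm:simp-KKT} then identifies the limit as a KKT point of~\eqref{eqn:opt0}. The one point needing care — and the only place the nonlinearity enters — is verifying that Lemma~\ref{lem:aux-bound-1} and Lemma~\ref{lem:lag-subgradient-1} genuinely transfer to this section; both hinge on $i_{\bar C}(x_{k+1},x_k)=0$, which in turn rests on the full step $\alpha_k=1$ (Lemma~\ref{lem:step-size}) and exact consistency of the linearized subproblem (Proposition~\ref{prop:large-k}), so establishing those two cross-references cleanly is the main (and essentially only) obstacle.
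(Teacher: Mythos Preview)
Your proposal is correct and is precisely the argument the paper has in mind: the paper omits the proof, noting it parallels that of Theorem~\ref{thm:local-affine-KL} with Lemmas~\ref{lem:step-size} and~\ref{lem:lag-decrease} substituted for their affine analogues and Lemmas~\ref{lem:aux-bound-1} and~\ref{lem:lag-subgradient-1} carrying over unchanged. One cosmetic slip: with nonlinear $c_i$ you do \emph{not} get $c_i(x_{k+1})=c_i(x_k)+\nabla c_i(x_k)^T d_k$; what you actually have (and all you need) is $\bar c_i(x_{k+1},x_k)=c_i(x_k)+\nabla c_i(x_k)^T d_k$, which by Proposition~\ref{prop:large-k} and $\alpha_k=1$ satisfies the linearized constraints exactly and hence gives $(x_{k+1},x_k)\in\bar C$.
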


Next, a result similar to Theorem~\ref{thm:local-affine-KL-2} is given below. 
\begin{theorem}\label{thm:local-rate-2}
	Given Assumption~\ref{assp:KL} and~\ref{assp:Bk-c}, let $2b\geq l+\sigma$ and $l/c_b>b\geq \sigma\geq \rho$. Suppose the potential function $L$~\eqref{eqn:opt1-KLfunction} satisfies KL property with exponent $\alpha \in [0,1)$.
	If $\alpha \in (0,\frac{1}{2}]$, then there exists $q_0\in(0,1)$ and $q_1>0$ such that for $k$ large enough, 
	$\norm{x_k-\bar{x}} \leq q_1 q_0^k$, where $x_k\to\bar{x}$.
\end{theorem}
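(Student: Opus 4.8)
The plan is to follow the proof of Theorem~\ref{thm:local-affine-KL-2} almost verbatim, substituting the nonlinear-constraint lemmas of Section~\ref{sec:nonlinear-local} for their affine counterparts. First I would invoke Theorem~\ref{thm:local-KL} to get $x_k\to\bar x$ with $\bar x$ a KKT point of~\eqref{eqn:opt0}; by Lemma~\ref{lem:lag-decrease} the sequence $\{L(x_{k+1},-g_k+\sigma x_k,x_k)\}$ is nonincreasing for $k$ large and, being bounded below (as in~\eqref{eqn:aux-bound-1}), it converges to some $\bar L$. Set $y_k:=L(x_{k+1},-g_k+\sigma x_k,x_k)-\bar L\ge 0$. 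If $y_k=0$ for some large $k$, the estimate of Lemma~\ref{lem:lag-decrease} forces $d_k=0$ and the conclusion is immediate, so assume $y_k>0$ for all large $k$. By Lemma~\ref{lem:step-size} we have $\alpha_k=1$ and $x_{k+1}=x_k+d_k$ for $k$ large, and by Proposition~\ref{prop:large-k} the linearized constraints are consistent, so $(x_{k+1},-g_k+\sigma x_k,x_k)\in\mathrm{dom}\,\partial L$ and the KL inequality of Definition~\ref{def:KL} applies along the sequence with $\varphi(z)=a_0 z^{1-\alpha}$.

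Next I would chain the three key estimates exactly as in~\eqref{eqn:local-rate-pf-1}--\eqref{eqn:local-rate-pf-3}. The KL inequality gives $a_0(1-\alpha)y_k^{-\alpha}\,\mathrm{dist}(0,\partial L(x_{k+1},-g_k+\sigma x_k,x_k))\ge 1$; Lemma~\ref{lem:lag-subgradient-1} (which the text notes still holds here, the extra $\nabla^2 c_i$ block being bounded since $\{\lambda^{k+1}\}$ and $\{\nabla^2 c_i(x_k)\}$ are bounded) bounds that distance by $c_L\norm{d_k}$; and Lemma~\ref{lem:lag-decrease} gives $\norm{d_k}^2\le \frac{2}{c_d}(y_k-y_{k+1})$. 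Combining and squaring yields $y_k^{2\alpha}\le M(y_k-y_{k+1})$ with $M=c_y^2(1-\alpha)^2$ and $c_y=a_0 c_L\sqrt{2/c_d}$. Since $\alpha\in(0,\frac12]$ we have $2\alpha\le 1$, so for $k$ large enough that $y_k<1$ we get $y_{k+1}\le y_k\le y_k^{2\alpha}\le M(y_k-y_{k+1})$, i.e. $y_{k+1}\le \frac{M}{M+1}y_k$; iterating gives $y_k\le y_0 q_b^k$ with $q_b=\frac{M}{M+1}\in(0,1)$, so $\{L(x_{k+1},-g_k+\sigma x_k,x_k)\}$ converges $R$-linearly to $\bar L$.

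Finally I would transfer this rate to $\{x_k\}$. The concavity argument from the proof of Theorem~\ref{thm:local-affine-KL} (inequalities~\eqref{eqn:local-affine-KL-5}--\eqref{eqn:local-affine-KL-7}) carries over without change, since it uses only Lemma~\ref{lem:lag-subgradient-1} and Lemma~\ref{lem:lag-decrease}, both valid here; it yields $\norm{x_{k+1}-x_k}\le c_z a_0\,(y_k^{1-\alpha}-y_{k+1}^{1-\alpha})$ with $c_z=2c_L/c_d$. Summing from $k$ to $k+j$ telescopes to $\norm{x_k-x_{k+j}}\le c_z a_0(y_k^{1-\alpha}-y_{k+j}^{1-\alpha})$, and letting $j\to\infty$ with $y_k\to 0$ and $x_k\to\bar x$ gives $\norm{x_k-\bar x}\le c_z a_0 y_k^{1-\alpha}\le c_z a_0 y_0^{1-\alpha}q_b^{k(1-\alpha)}$; taking $q_0=q_b^{1-\alpha}\in(0,1)$ and $q_1=c_z a_0 y_0^{1-\alpha}$ finishes the proof.

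The main (and essentially only) obstacle is to check that Lemma~\ref{lem:lag-subgradient-1} and Lemma~\ref{lem:lag-decrease} indeed hold on a common ``$k$ large enough'' regime for nonlinear $c_i$: the subgradient expression~\eqref{eqn:aux-affine-sub-pf-3} now retains the $-\lambda^{k+1}_i\nabla^2 c_i(x_k)d_k$ term, which must be controlled uniformly, and the descent estimate must absorb the constraint-curvature contributions $H\norm{d_{k-1}}^2$ from Assumption~\ref{assp:boundedHc}; this is precisely the role of Assumption~\ref{assp:Bk-c} together with the choice $l>c_b b$ in the hypotheses. Once these two lemmas are secured, everything else is the same KL recursion as in the affine case, so I would keep the write-up terse and point back to Theorem~\ref{thm:local-affine-KL-2}.
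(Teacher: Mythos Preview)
Your proposal is correct and matches the paper's approach exactly: the paper omits the proof of Theorem~\ref{thm:local-rate-2} entirely, stating only that it is similar to that of Theorem~\ref{thm:local-affine-KL-2}, and your write-up carries out precisely that substitution (Lemma~\ref{lem:lag-decrease} for Lemma~\ref{lem:affine-lag-decrease}, Lemma~\ref{lem:step-size} for Lemma~\ref{lem:affine-step-size}, with Lemma~\ref{lem:lag-subgradient-1} unchanged). Your closing paragraph correctly identifies that the only new content is verifying that the subgradient bound and the descent estimate survive the constraint curvature, which is exactly what Assumption~\ref{assp:Bk-c} and the condition $l>c_b b$ handle.
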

The proof of Theorem~\ref{thm:local-KL} and~\ref{thm:local-rate-2} are similar to those of Theorem~\ref{thm:local-affine-KL} and~\ref{thm:local-affine-KL-2} and are thus omitted.



\section{Numerical example}\label{sec:exp}
The numerical example we present is a regularized SCACOPF problem that is decomposed into two-stage optimization problems, where 
the second-stage problems are also called contingency problems. The nonsmooth part of the first-stage objective consists of value functions of the second-stage problems. All the objective and constraint functions in both the first- and second-stage problems are analytic. As a practical problem, the optimization variables are all bounded.
	The complete mathematical formulation is complex but the first-stage problem fits in the form of~\eqref{eqn:opt0}.
	Details of the 	problem setup can be seen in~\cite{petra_21_gollnlp}. The number of second-stage/contingency problems is 100.

 The first- and second-stage problems are coupled through linear constraints in $x$ in the second-stage problem. 
Using a quadratic penalty of the coupling constraints in the second-stage problems, $f$ becomes upper-$\Ctwo$ and the problem is referred to as the regularized SCACOPF, in contrast to the non-regularized one. 
Algorithm~\ref{alg:sqp} is applied to solve the regularized SCACOPF. In~\cite{wang2022}, the effectiveness of the regularized problem for the non-regularized one is demonstrated. Hence, our experiment would focus on the regularized problem itself. 
We note that the global and local convergence theories established in this paper can be readily applied.

The network data used in this example is from the ARPA-E Grid Optimization competition~\cite{petra_21_gollnlp}.
The quadratic penalty parameter in the second-stage contingency problem is set to $10^9$. 
To best illustrate the practical results of the local convergence analysis, a fixed $B_k = 10^6 I$, $I$ being the identity matrix, is used after 100 iterations. 
The algorithm is run for $300$ iterations where the stopping criterion of $\epsilon=10^{-8}$ is achieved already. 
The objective plot is given in Figure~\ref{fig:scacopf_obj}.
\begin{figure}
  \centering
  \includegraphics[width=0.8\textwidth]{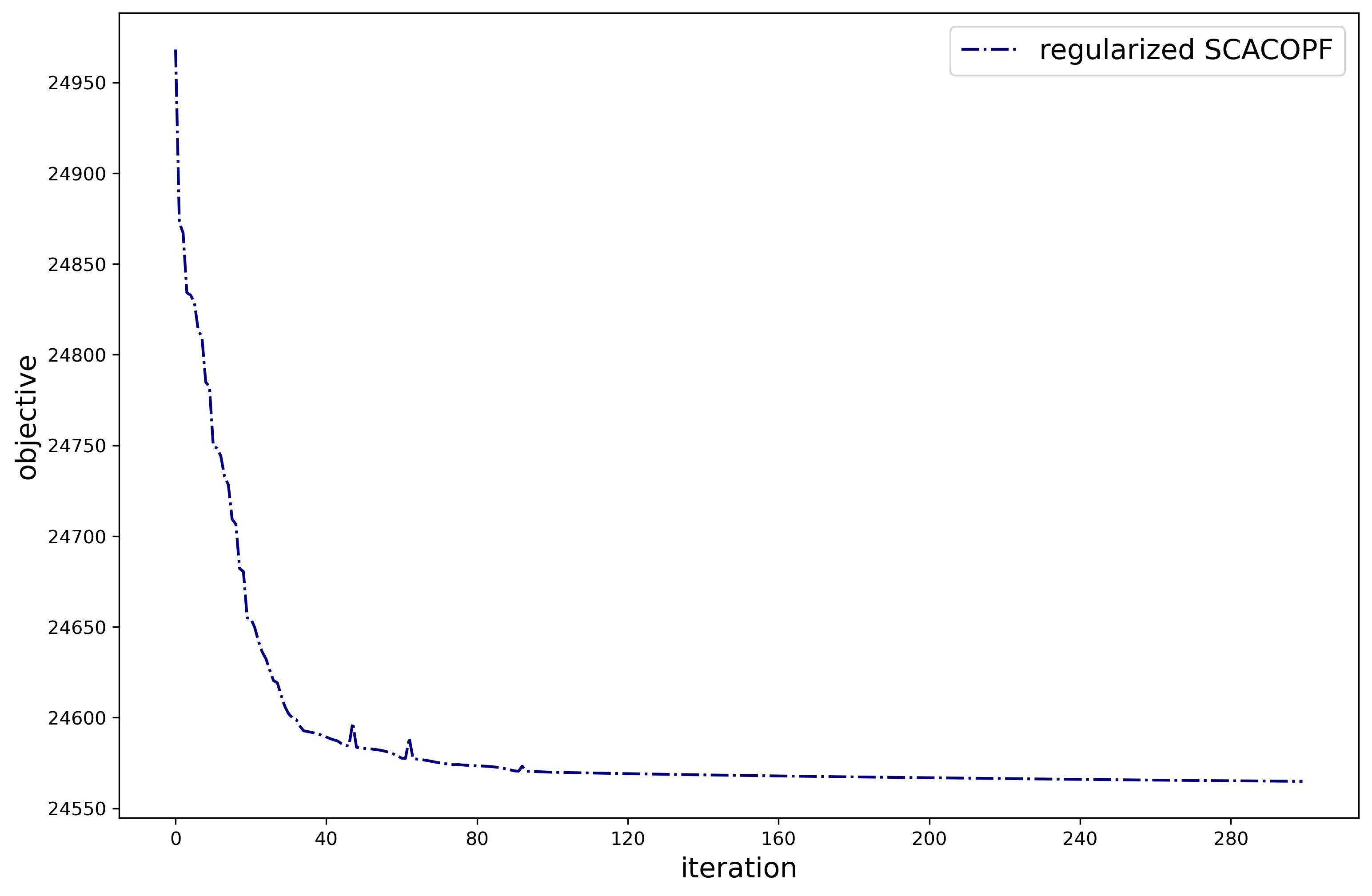}
	\caption{Objective value \textit{v.s.} iterations}
\label{fig:scacopf_obj}
\end{figure}

The final iterate $x_{300}$ is chosen as the true solution for the regularized SCACOPF and the error in $x$ is defined as $\norm{x_k-x_{300}}$. We take the logarithm of the error in $x$ and plot against the number of iterations from 0 to 250 in Figure~\ref{fig:scacopf_local}. 
\begin{figure}
  \centering
  \includegraphics[width=0.8\textwidth]{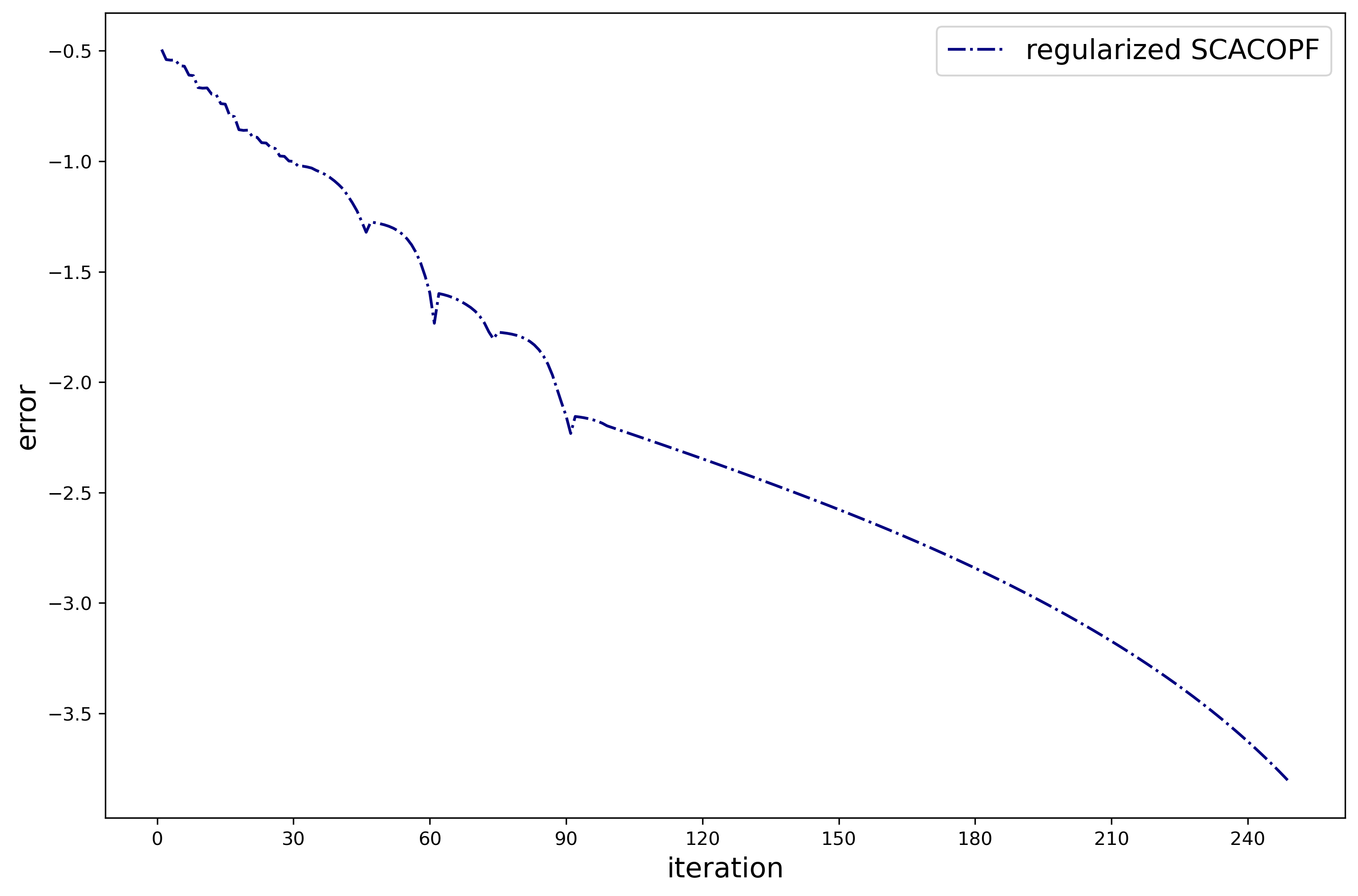}
	\caption{Logarithm error in $x$ \textit{v.s.} iterations}
\label{fig:scacopf_local}
\end{figure}
From Figure~\ref{fig:scacopf_local}, the local convergence behavior for $k>100$ follows Theorem~\ref{thm:local-KL} and~\ref{thm:local-rate-2}. While we do not know the value of exponent $\alpha$ for KL properties, the rate of the decrease of logarithm error can be seen to be bounded above by some linear functions.
Since iteration $300$ is chosen as the true solution, the error $\norm{x_k-x_{300}}$ decreases even faster after 250 iterations and thus clearly observes our analysis.

\section{\normalsize Conclusions}\label{sec:con}
In this paper, we have proposed a classic SQP algorithm for optimization problems
with upper-$\Ctwo$ objectives, which exist in many applications, particularly two-stage optimization problems.
The proposed algorithm is subsequentially convergent under reasonable conditions.
Further, the local convergence behavior of the algorithm is analyzed based on the well-known KL properties.  
This important assumption allows us to prove convergence rate upper bound, similar to recent results in DCAs, depending on the specific forms of the KL properties. Importantly, with the help of subanalytic sets and functions, a large number of optimization problems with upper-$\Ctwo$ objectives enjoy a KL potential function.  
These problems include our target application, SCACOPF problems, which we demonstrate as a numerical experiment.  
Finally, we note that the algorithm has been implemented on parallel computing platforms for power grid optimization problems, and has shown significant potential for computational scalability~\cite{wang2021}.

\appendix

\section*{Acknowledgments}
Prepared by LLNL under Contract DE-AC52-07NA27344. Release number: LLNL-JRNL-856526. 
\bibliographystyle{siamplain}
\bibliography{bibliography}

\begin{thebibliography}{10}

\bibitem{an2018}
{\sc L.~T.~H. An and P.~D. Tao}, {\em {DC programming and DCA: thirty years of
  developments}}, Mathematical Programming, 169 (2018), pp.~5--68.

\bibitem{atenas2023bundlelocal}
{\sc F.~Atenas, C.~Sagastiz{\'a}bal, P.~J. Silva, and M.~Solodov}, {\em A
  unified analysis of descent sequences in weakly convex optimization,
  including convergence rates for bundle methods}, SIAM Journal on
  Optimization, 33 (2023), pp.~89--115.

\bibitem{attouch2009convergence}
{\sc H.~Attouch and J.~Bolte}, {\em On the convergence of the proximal
  algorithm for nonsmooth functions involving analytic features}, Mathematical
  Programming, 116 (2009), pp.~5--16.

\bibitem{attouch2010proximal}
{\sc H.~Attouch, J.~Bolte, P.~Redont, and A.~Soubeyran}, {\em {Proximal
  alternating minimization and projection methods for nonconvex problems: An
  approach based on the Kurdyka-{\L}ojasiewicz inequality}}, Mathematics of
  operations research, 35 (2010), pp.~438--457.

\bibitem{attouch2013convergence}
{\sc H.~Attouch, J.~Bolte, and B.~F. Svaiter}, {\em Convergence of descent
  methods for semi-algebraic and tame problems: proximal algorithms,
  forward--backward splitting, and regularized gauss--seidel methods},
  Mathematical Programming, 137 (2013), pp.~91--129.

\bibitem{bierstone1988semianalytic}
{\sc E.~Bierstone and P.~D. Milman}, {\em Semianalytic and subanalytic sets},
  Publications Math{\'e}matiques de l'IH{\'E}S, 67 (1988), pp.~5--42.

\bibitem{Birge97Book}
{\sc J.~R. Birge and F.~Louveaux}, {\em Introduction to Stochastic
  Programming}, Springer-Verlag, New York, 1997.

\bibitem{boggs1995sqp}
{\sc P.~T. Boggs and J.~W. Tolle}, {\em Sequential quadratic programming}, Acta
  numerica, 4 (1995), pp.~1--51.

\bibitem{bolte2007lojasiewicz}
{\sc J.~Bolte, A.~Daniilidis, and A.~Lewis}, {\em The {{\L}ojasiewicz}
  inequality for nonsmooth subanalytic functions with applications to
  subgradient dynamical systems}, SIAM Journal on Optimization, 17 (2007),
  pp.~1205--1223.

\bibitem{bolte2014kl}
{\sc J.~Bolte, S.~Sabach, and M.~Teboulle}, {\em Proximal alternating
  linearized minimization for nonconvex and nonsmooth problems}, Mathematical
  Programming, 146 (2014), pp.~459--494.

\bibitem{ChiangPetraZavala_14_PIPSNLP}
{\sc N.~{Chiang}, C.~G. {Petra}, and V.~M. {Zavala}}, {\em Structured nonconvex
  optimization of large-scale energy systems using pips-nlp}, in 2014 Power
  Systems Computation Conference, 2014, pp.~1--7.

\bibitem{clarke1983}
{\sc F.~Clarke}, {\em {Optimization and Nonsmooth Analysis}}, {John Wiley \&
  Sons New York}, 1983.

\bibitem{cui2021}
{\sc Y.~Cui and J.~S. Pang}, {\em Modern Nonconvex Nondifferentiable
  Optimization}, Society for Industrial and Applied Mathematics, 2021.

\bibitem{curtis2017}
{\sc F.~E. Curtis, T.~Mitchell, and M.~L. Overton}, {\em A {BFGS-SQP} method
  for nonsmooth, nonconvex, constrained optimization and its evaluation using
  relative minimization profiles}, Optimization Methods Software, 32 (2017),
  p.~148–181.

\bibitem{curtis2012}
{\sc F.~E. Curtis and M.~Overton}, {\em A sequential quadratic programming
  algorithm for nonconvex, nonsmooth constrained optimization}, SIAM J. Optim.,
  22 (2012), pp.~474--500.

\bibitem{daniilidis2004}
{\sc A.~Daniilidis and P.~Georgiev}, {\em Approximate convexity and
  submonotonicity}, Journal of Mathematical Analysis and Applications, 291
  (2004), p.~292–301.

\bibitem{fletcher2013}
{\sc R.~Fletcher}, {\em Practical methods of optimization}, John Wiley \& Sons,
  2013.

\bibitem{fukushima1986secondorder}
{\sc M.~Fukushima}, {\em A successive quadratic programming algorithm with
  global and superlinear convergence properties}, Mathematical Programming, 35
  (1986), pp.~253--264.

\bibitem{gill2017stabilizedsqp}
{\sc P.~E. Gill, V.~Kungurtsev, and D.~P. Robinson}, {\em A stabilized sqp
  method: superlinear convergence}, Mathematical Programming, 163 (2017),
  pp.~369--410.

\bibitem{hare2010}
{\sc W.~Hare and C.~Sagastizábal}, {\em A redistributed proximal bundle method
  for nonconvex optimization}, SIAM J. Optim., 20 (2010), pp.~2442--73.

\bibitem{KallWallace}
{\sc P.~Kall and S.~W. Wallace}, {\em Stochastic Programming}, John Wiley \&
  Sons, Chichester, 2nd~ed., 1994.

\bibitem{kiwiel1985}
{\sc K.~Kiwiel}, {\em A linearization algorithm for nonsmooth minimization},
  Mathematics of Operations Research, 10 (1985), pp.~185--94.

\bibitem{Kiwiel1996}
{\sc K.~Kiwiel}, {\em Restricted step and levenberg-marquardt techniques in
  proximal bundle methods for nonconvex nondifferentiable optimization}, SIAM
  J. Optim., 6 (1996), pp.~227--249.

\bibitem{kurdyka1998gradients}
{\sc K.~Kurdyka}, {\em On gradients of functions definable in o-minimal
  structures}, in Annales de l'institut Fourier, vol.~48, 1998, pp.~769--783.

\bibitem{le2018KL}
{\sc H.~A. Le~Thi, V.~N. Huynh, and T.~Pham~Dinh}, {\em Convergence analysis of
  difference-of-convex algorithm with subanalytic data}, Journal of
  Optimization Theory and Applications, 179 (2018), pp.~103--126.

\bibitem{liu2020}
{\sc J.~Liu, Y.~Cui, J.~S. Pang, and S.~Sen}, {\em Two-stage stochastic
  programming with linearly bi-parameterized quadratic recourse}, SIAM J.
  Optim., 30 (2020), p.~2530–2558.

\bibitem{liu2019pdcae}
{\sc T.~Liu, T.~K. Pong, and A.~Takeda}, {\em A refined convergence analysis of
  $p{DCA}_e$ with applications to simultaneous sparse recovery and outlier
  detection}, Computational Optimization and Applications, 73 (2019),
  pp.~69--100.

\bibitem{mifflin1982}
{\sc R.~Mifflin}, {\em A modification and an extension of {Lemarechal’s}
  algorithm for nonsmooth minimization}, in Nondifferential and Variational
  Techniques in Optimization, vol.~17 of Mathematical Programming Studies,
  Springer, Berlin, Heidelberg, 1982, pp.~77--90.

\bibitem{mordukhovich2004upp}
{\sc B.~Mordukhovich}, {\em Necessary conditions in nonsmooth minimization via
  lower and upper subgradients}, Set-Valued Analysis, 12 (2004), pp.~163--193.

\bibitem{makela1992}
{\sc M.~M. Mäkelä and P.~Neittaanmäki}, {\em Nonsmooth Optimization}, World
  Scientific, 1992.

\bibitem{nesterovconvex2003}
{\sc Y.~Nesterov}, {\em Introductory lectures on convex optimization: A basic
  course}, Springer New York, NY, 2003.

\bibitem{Nocedal_book}
{\sc J.~Nocedal and S.~J. Wright}, {\em Numerical Optimization}, Springer, New
  York, 2nd~ed., 2006.

\bibitem{noll2013}
{\sc D.~Noll}, {\em Bundle method for non-convex minimization with inexact
  subgradients and function values}, Springer Proceedings in Mathematics and
  Statistics, 50 (2013).

\bibitem{noll2014KL}
{\sc D.~Noll}, {\em Convergence of non-smooth descent methods using the
  kurdyka--{\l}ojasiewicz inequality}, Journal of Optimization Theory and
  Applications, 160 (2014), pp.~553--572.

\bibitem{petra_21_gollnlp}
{\sc C.~G. Petra and I.~Aravena}, {\em Solving realistic security-constrained
  optimal power flow problems}, Operations Research, submitted (2021).

\bibitem{petra_14_realtime}
{\sc C.~G. Petra, O.~Schenk, and M.~Anitescu}, {\em Real-time stochastic
  optimization of complex energy systems on high performance computers},
  Computing in Science and Engineering, 99 (2014), pp.~1--9.

\bibitem{petra_14_augIncomplete}
{\sc C.~G. Petra, O.~Schenk, M.~Lubin, and K.~G{\"a}rtner}, {\em An augmented
  incomplete factorization approach for computing the {S}chur complement in
  stochastic optimization}, SIAM Journal on Scientific Computing, 36 (2014),
  pp.~C139--C162.

\bibitem{Qiu2005}
{\sc W.~Qiu, A.~J. {Flueck}, and F.~Tu}, {\em A parallel algorithm for security
  constrained optimal power flow with an interior point method}, in IEEE Power
  Engineering Society General Meeting, 2005, 2005, pp.~447--453 Vol. 1.

\bibitem{rockafellar1998}
{\sc R.~T. Rockafellar and R.~J.-B. Wets}, {\em Variational Analysis},
  Springer-Verlag, Berlin Heidelberg, 1998.

\bibitem{Shapiro_book}
{\sc A.~Shapiro, D.~Dentcheva, and A.~Ruszczyński}, {\em Lectures on
  Stochastic Programming: Modeling and Theory, Second Edition}, Society for
  Industrial and Applied Mathematics, Philadelphia, PA, 2014.

\bibitem{Spingarn1981SubmonotoneSO}
{\sc J.~Spingarn}, {\em Submonotone subdifferentials of lipschitz functions},
  Transactions of the American Mathematical Society, 264 (1981), pp.~77--89.

\bibitem{wang2021}
{\sc J.~Wang, N.~Y. Chiang, and C.~G. Petra}, {\em An asynchronous
  distributed-memory optimization solver for two-stage stochastic programming
  problems}, in 20th International Symposium on Parallel and Distributed
  Computing (ISPDC), IEEE, Jul 2021, pp.~33--40.

\bibitem{wang2022}
{\sc J.~Wang and C.~G. Petra}, {\em A sequential quadratic programming
  algorithm for nonsmooth problems with upper-$\mathcal{C}^2$ objective}, SIAM
  Journal on Optimization, 33 (2023), pp.~2379--2405.

\bibitem{wen2018proximal}
{\sc B.~Wen, X.~Chen, and T.~K. Pong}, {\em A proximal difference-of-convex
  algorithm with extrapolation}, Computational optimization and applications,
  69 (2018), pp.~297--324.

\bibitem{yu2021KL}
{\sc P.~Yu, T.~K. Pong, and Z.~Lu}, {\em Convergence rate analysis of a
  sequential convex programming method with line search for a class of
  constrained difference-of-convex optimization problems}, SIAM Journal on
  Optimization, 31 (2021), pp.~2024--2054.

\end{thebibliography}
\end{document}